\theoremstyle{plain}
\newenvironment{customthm}[1]
  {\innercustomthm}
  {\endinnercustomthm}
\newtheorem{theorem}{Theorem}[section]
\newtheorem{lemma}[theorem]{Lemma}
\newtheorem{corollary}[theorem]{Corollary}
\newtheorem{proposition}[theorem]{Proposition}
\newtheorem{question}[theorem]{Question}
\newtheorem{problem}[theorem]{Problem}
\theoremstyle{definition}\newtheorem{definition}[theorem]{Definition}
\theoremstyle{definition}
\theoremstyle{definition}\newtheorem{remark}[theorem]{Remark}
\numberwithin{equation}{section}
\newcommand{\R}{\mathbb{R}}
\newcommand{\RR}{\mathbb{R}}
\newcommand{\bd}{\begin{definition}}
\newcommand{\ed}{\end{definition}}
\newcommand{\mc}{\mathcal}
\newcommand{\f}{\bar}
\newcommand{\un}{([0,1]^{<\omega_1}_{\searrow 0},<_{altlex})}
\newcommand{\uni}{[0,1]^{<\omega_1}_{\searrow 0}}
\newcommand{\concat}{%
  \mathbin{\raisebox{1ex}{\scalebox{.7}{$\frown$}}}%
}
\newcommand{\concatt}{%
  \mathbin{\raisebox{1ex}{\scalebox{.9}{$\frown$}}}%
}
\begin{document}

\title[Linearly ordered families of Baire class $1$ functions]{Characterization of order types of pointwise linearly ordered families of Baire class $1$ functions}

\author{M\'arton Elekes}
\address{Alfr\'ed R\'enyi Institute of Mathematics, Hungarian Academy of Sciences,
PO Box 127, 1364 Budapest, Hungary and E\"otv\"os Lor\'and
University, Institute of Mathematics, P\'azm\'any P\'eter s. 1/c,
1117 Budapest, Hungary}
\email{elekes.marton@renyi.mta.hu}
\urladdr{www.renyi.hu/ \!$\tilde{}$ \!\!emarci}

\thanks{Both authors were supported by the National Research, Development and
	Innovation Office – NKFIH, grants no. 113047 and 104178.}

\author{Zolt\'an Vidny\'anszky}
\address{Alfr\'ed R\'enyi Institute of Mathematics, Hungarian Academy of Sciences, PO Box 127, 1364 Budapest, Hungary}
\email{vidnyanszky.zoltan@renyi.mta.hu}

\subjclass[2010]{Primary 26A21, 03E15; Secondary 03E04, 03E50}
\keywords{Baire class $1$, linearly, partially ordered set, Polish space, universal, Laczkovich's Problem, completion, product, lexicographical} 
\urladdr{www.renyi.hu/ \!$\tilde{}$ \!\!vidnyanz}

\maketitle

\begin{abstract}

In the 1970s M. Laczkovich posed the following problem: Let $\mathcal{B}_1(X)$ denote the set of Baire class $1$ functions defined on an uncountable Polish space $X$ equipped with the pointwise ordering. 
\begin{center}
\emph{Characterize the order types of the linearly ordered subsets of $\mathcal{B}_1(X)$.}
\end{center}
The main result of the present paper is a complete solution to this problem. 

We prove that a linear order is isomorphic to a linearly ordered family of Baire class $1$ functions iff it is isomorphic to a subset of the following linear order that we call $\un$, where $\uni$ is the set of strictly decreasing transfinite sequences of reals in $[0, 1]$ with last element $0$, and $<_{altlex}$, the so called \emph{alternating lexicographical ordering}, is defined as follows: if $(x_\alpha)_{\alpha\le \xi}, (x'_\alpha)_{\alpha\le \xi'} \in \uni$ are distinct, and $\delta$ is the minimal ordinal where the two sequences differ then we say that 
\[
(x_\alpha)_{\alpha\le \xi} <_{altlex} (x'_\alpha)_{\alpha\le \xi'} \iff
(\delta \text{ is even and } x_{\delta}<x'_{\delta}) \text{ or } (\delta \text{ is odd and } x_{\delta}>x'_{\delta}).
\]

Using this characterization we easily reprove all the known results and answer all the known open questions of the topic.
\end{abstract}

\tableofcontents

\section{Introduction}

Let $\mathcal{F}(X)$ be a class of real valued functions defined on a Polish
space $X$, e.g. $C(X)$, the set of continuous functions. The natural partial ordering on this space is the pointwise ordering
$<_p$, that is, we say that $f<_pg$ if for every $x \in X$ we have $f(x) \leq
g(x)$ and there exists at least one $x$ such that $f(x)<g(x)$. If we would like to understand the structure of this partially ordered set (poset), the first step is to 
describe its linearly ordered subsets. 

For example, if $X = [0,1]$ and $\mathcal{F}(X)=\mathcal{C}([0,1])$ then it is a well known result that the possible order types of the linearly ordered subsets of $\mathcal{C}([0,1])$ are the real order types (that is, the order types of the subsets of the reals). Indeed, a real order type is clearly representable by constant functions, and if $\mathcal{L} \subset \mathcal{C}([0,1])$ is a linearly ordered family of continuous functions then (by continuity) $f \mapsto \int_0^1 f$ is a \emph{strictly} monotone map of $\mathcal{L}$ into the reals. 

The next natural class to look at is the class of Lebesgue measurable functions. However, it is not hard to check that the assumption of measurability is rather meaningless here. Indeed, if $\mathcal{L}$ is a linearly ordered family of \emph{arbitrary} real functions and $\varphi : \RR \to \RR$ is a map that maps the Cantor set onto $\RR$ and is zero outside of the Cantor set then $f \mapsto f \circ \varphi$ is a strictly monotone map of $\mathcal{L}$ into the class of Lebesgue measurable functions. 

Therefore it is more natural to consider the class of Borel measurable functions. However, P. Komj\'ath \cite{komjath} proved that
it is already independent of $ZFC$ (the usual axioms of set theory) whether the class of Borel measurable functions contains a strictly increasing transfinite sequence of length $\omega_2$. 

The next step is therefore to look at subclasses of the Borel measurable functions, namely the Baire hierarchy. A function is of \emph{Baire class
$1$} if it is the pointwise limit of continuous functions. The set of (real valued) Baire class $1$ functions defined on a space $X$ will be denoted by $\mathcal{B}_1(X)$. A function is of \emph{Baire class $2$} if it is the pointwise limit of Baire class $1$ functions. Komj\'ath actually also proved that in his above mentioned result the set of Borel measurable function can be replaced by the set of Baire class $2$ functions. This explains why the Baire class $1$ case seem to be the most interesting one.

Back in the 1970s M. Laczkovich \cite{Mik} posed the following problem:

\begin{problem}
\label{p:laczk}
Characterize the order types of the linearly ordered subsets of $(\mc{B}_1(X),<_p)$.
\end{problem}

We will use the following notation:

\begin{definition}
Let $(P,<_P)$ and $(Q,<_Q)$ be two posets. We say that $P$ is \emph{embeddable into
$Q$}, in symbols $(P,<_P) \hookrightarrow (Q,<_Q)$ if there exists a map $\Phi:P \to Q$ such
that for every $p,q \in P$ if $p<_P q$ then $\Phi(p)<_Q \Phi(q)$. (Note that an embedding may not be 1-to-1 in general. However, an embedding of a \emph{linearly} ordered set \emph{is} 1-to-1.) If $(L, <_L)$ is a linear ordering and $(L, <_L) \hookrightarrow (Q,<_Q)$ then we also say that \emph{$L$ is representable in $Q$}. 
\end{definition}

Whenever the ordering of a poset $(P,<_P)$ is clear from the context we will use
the notation $P=(P,<_P)$. Moreover, when $Q$ is not specified, the term ``representable'' will refer to representability in $\mc{B}_1(X)$.

The earliest result that is relevant to Laczkovich's problem is due to Kuratowski. He
showed that for any Polish space $X$ we have $\omega_1,\omega^*_1 \not
\hookrightarrow \mc{B}_1(X)$, or in other words, there is no $\omega_1$-long strictly
increasing or decreasing sequence of Baire class $1$ functions (see \cite[\textsection 24. III.2.]{kur}).

It seems conceivable at first sight that this is the only obstruction, that is, every linearly ordered
set that does not contain $\omega_1$-long strictly 
increasing or decreasing sequences is representable in $\mc{B}_1(\R)$. First, answering a question of Gerlits and Petruska, this conjecture was consistently refuted by P. Komj\'ath \cite{komjath} who showed that no Suslin line (ccc linearly ordered set that is not separable) is representable in
$\mathcal{B}_1(\R)$. Komj\'ath's short and elegant proof uses the very difficult set-theoretical technique of forcing. Laczkovich \cite{laczk} asked if a forcing-free proof exists.

Elekes and Stepr\=ans \cite{marci2} continued this line of research. On the one hand they proved that consistently
Kuratowski's result is a characterization for order types of cardinality
$<\mathfrak{c}$. On the other hand they strengthened Komj\'ath's result by 
constructing in $ZFC$ a linearly ordered set $L$ not containing Suslin lines or $\omega_1$-long strictly increasing or decreasing sequences such that
$L$ is not representable in $\mathcal{B}_1(X)$. 

Among other results, M. Elekes \cite{marci1} proved that if $X$ and $Y$ are
both uncountable $\sigma$-compact or both non-$\sigma$-compact Polish spaces then for a
linearly ordered set $L$ we have $L\hookrightarrow \mathcal{B}_1(X) \iff
L\hookrightarrow \mathcal{B}_1(Y)$. Then he asked if the same holds if $X$ is an uncountable $\sigma$-compact and $Y$ is a non-$\sigma$-compact Polish space. Moreover, he also asked whether the same linearly
ordered sets can be embedded into the set of \emph{characteristic} functions in
$\mc{B}_1(X)$ as into $\mc{B}_1(X)$. Notice that a characteristic function
$\chi_A$ is of Baire class $1$ if and only if $A$ is simultaneously
$F_\sigma$ and $G_\delta$ (denoted by $A \in \mathbf{\Delta}^0_2(X)$, see the Preliminaries section below). Moreover, $\chi_A <_p \chi_B \iff A \subsetneqq B$, hence
the above question is equivalent to whether $L \hookrightarrow (\mc{B}_1(X), <_p)$ implies
$L \hookrightarrow \mathbf{(\Delta}^0_2(X), \subsetneqq)$. He also asked if \emph{duplications} and completions of representable orders are themselves representable, where the duplication of $L$ is $L \times \{0,1\}$ ordered lexicographically. 

Our main aim in this paper is to solve Problem \ref{p:laczk} and consequently answer the above mentioned questions. 
The solution proceeds by constructing a \emph{universal} linearly ordered set for $\mathcal{B}_1(X)$, that is, a linear order that is representable in $\mc{B}_1(X)$ such that every representable linearly ordered set is embeddable into it. Of course such a linear order only provides a useful characterization if it is sufficiently simple combinatorially to work with. We demonstrate this by providing new, simpler proofs of the known theorems 
(including a forcing-free proof of Komj\'ath's theorem), and also by answering the above mentioned open questions.

The universal linear ordering can be defined as follows.

\begin{definition}
 Let $[0,1]^{<\omega_1}_{\searrow 0}$ be the set of strictly decreasing well-ordered
transfinite sequences in $[0,1]$ with last element zero.
 Let $\bar{x}=(x_\alpha)_{\alpha\leq \xi}, \bar{x}'=(x'_\alpha)_{\alpha\leq
\xi'} \in [0,1]^{<\omega_1}_{\searrow 0}$ be distinct and let $\delta$ be the minimal ordinal such that
$x_\delta \not = x'_\delta$. We say that
\[
(x_\alpha)_{\alpha\le \xi} <_{altlex} (x'_\alpha)_{\alpha\le \xi'} \iff
(\delta \text{ is even and } x_{\delta}<x'_{\delta}) \text{ or } (\delta \text{ is odd and } x_{\delta}>x'_{\delta}).
\]
\end{definition}

Now we can formulate our main result.

\begin{theorem}
(Main Theorem) Let $X$ be an uncountable Polish space. Then the following are
equivalent for a linear ordering $(L,<)$:
\begin{enumerate}[(1)]
  
  \item $(L,<) \hookrightarrow (\mc{B}_1(X),<_p)$,
   \item $(L,<) \hookrightarrow ([0,1]^{<\omega_1}_{\searrow 0},<_{altlex})$.
  \end{enumerate}
  In fact, $(\mc{B}_1(X),<_p)$ and  $\un$ are embeddable into
each other.
\end{theorem}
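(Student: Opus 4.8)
The plan is to prove the two embeddings separately, since the ``in fact'' statement amounts to establishing both directions of the equivalence in a way that does not just yield set-embeddings but mutual representability of the two specific orders.

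\textbf{Direction $(2)\Rightarrow(1)$: representing $\un$ in $\mc{B}_1(X)$.}
Since any uncountable Polish space contains a homeomorphic copy of the Cantor space and (by the cited result of Elekes, reproving it if needed) representability does not depend on the choice of uncountable Polish $X$, it suffices to work with $X=2^\omega$, or even with $X=[0,1]$ or $X=\om$ whichever is most convenient. The plan is to assign to each strictly decreasing sequence $\bar x=(x_\alpha)_{\alpha\le\xi}\in\uni$ a Baire class $1$ function $f_{\bar x}$ on $X$ so that $\bar x<_{altlex}\bar x'$ iff $f_{\bar x}<_p f_{\bar x'}$. The natural idea is a ``nested interval'' or ``branching'' construction: partition $X$ into countably many clopen pieces indexed by a tree, peel off one coordinate of the sequence at each level, and use the parity-dependent reversal in $<_{altlex}$ to match the fact that going deeper in the tree flips the direction of comparison. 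Concretely, one fixes a decreasing sequence of clopen (or ambiguous $\mathbf{\Delta}^0_2$) sets $X\supseteq A_0\supseteq A_1\supseteq\cdots$ with empty intersection but with each $A_n\setminus A_{n+1}$ still ``large'', defines $f_{\bar x}$ to equal roughly $x_0$ off $A_0$, then on $A_0$ to ``switch to'' a rescaled copy of the construction applied to the tail $(x_\alpha)_{1\le\alpha\le\xi}$ but with signs reversed, and then to be $0$ on the common part; one must check at limit stages that the sequence of continuous approximations still converges, which forces the requirement that the sequences have length $<\omega_1$ and terminate at $0$. The key point making this Baire class $1$ is that at each stage only countably many clopen pieces are involved and the ``tail'' contributions are uniformly small, so a diagonal argument produces continuous functions converging pointwise.

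\textbf{Direction $(1)\Rightarrow(2)$: embedding $\mc{B}_1(X)$ into $\un$.}
This is the hard direction and where I expect the main obstacle. Given $f\in\mc{B}_1(X)$ one must produce a transfinite decreasing sequence of reals coding $f$ in a way that is monotone with respect to $<_p$ versus $<_{altlex}$. The tool I would reach for is some form of oscillation/derivative rank for Baire class $1$ functions: for a pair $f<_p g$ the set $\{x:f(x)<g(x)\}$ is of type $\mathbf{\Sigma}^0_2$, and one can run a Cantor--Bendixson-style transfinite analysis (the ``rank of a Baire class $1$ function'', in the style of Kechris--Louveau or the oscillation rank) that at each countable stage extracts a real in $[0,1]$ recording some quantitative feature of $f$ on a shrinking closed set, stopping in countably many steps. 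The parity alternation in $<_{altlex}$ should correspond to the fact that at successive stages of such an analysis one alternates between looking ``from below'' and ``from above'' (e.g. alternately taking infima of $\limsup$s and suprema of $\liminf$s over the current residual set). The main obstacle is verifying that this assignment is genuinely order-preserving: one needs that if $f<_p g$ then at the first coordinate where the two codes differ, the parity and the real-number inequality line up correctly, which requires a delicate simultaneous induction showing the analyses of $f$ and $g$ ``track each other'' until the first place they witness $f(x)<g(x)$ on the relevant residual set. Handling the termination-at-$0$ normalization and the strictness (at least one point of strict inequality) are additional bookkeeping points.

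\textbf{Assembling the theorem.}
Once both embeddings are in hand, $(1)\Leftrightarrow(2)$ for a fixed uncountable Polish $X$ follows by composition, and the final sentence ``$(\mc{B}_1(X),<_p)$ and $\un$ are embeddable into each other'' is exactly the conjunction of the two constructions above (note we do not — and cannot, by cardinality considerations if nothing else — claim an isomorphism, only mutual embeddability). I would also remark that the $X$-independence can either be quoted from \cite{marci1} or obtained as a byproduct, since the construction in $(2)\Rightarrow(1)$ only uses that $X$ contains a copy of $2^\omega$ and the analysis in $(1)\Rightarrow(2)$ is insensitive to which uncountable Polish space is used.
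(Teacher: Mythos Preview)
Your plan for $(1)\Rightarrow(2)$ is on target and matches the paper's approach: the paper uses exactly the Kechris--Louveau canonical decomposition of a bounded nonnegative Baire class~$1$ function into a transfinite alternating sum of upper semicontinuous functions (via iterated upper regularization), and then sends each USC function to a real in $[0,1]$ by coding its closed subgraph against a fixed basis. The ``simultaneous induction'' you anticipate is precisely Proposition~\ref{p:embpos}: one shows by induction on $\beta\le\delta$ that the auxiliary functions $g^0_\beta, g^1_\beta$ satisfy the right inequalities with alternating direction, and this forces the correct comparison at the first coordinate $\delta$ where the two decompositions diverge. So that half of your sketch, while vague, would lead you to the paper's argument.

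The genuine gap is in your $(2)\Rightarrow(1)$ direction. Your proposed mechanism---a fixed decreasing $\omega$-indexed sequence $A_0\supseteq A_1\supseteq\cdots$ of clopen sets with empty intersection, peeling off one coordinate per level---cannot encode sequences of transfinite length. After $\omega$ steps there is no ``common part'' left (you yourself stipulate $\bigcap_n A_n=\emptyset$), so there is nowhere to record $x_\omega, x_{\omega+1},\dots$; and if instead you try to index the nested sets by countable ordinals, a strictly decreasing transfinite sequence of clopen sets in a Polish space has countable length but must eventually stabilize, so you again run out of room. The difficulty of representing $\un$ is exactly that the sequences have arbitrary countable ordinal length, and a direct ``depth-$\omega$'' recursion does not see past $\omega$.

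The paper's route is entirely different and rather delicate: it works in the hyperspace $\mathcal{K}([0,1]^2)$, assigns to each $\bar x$ a compact set $\Psi(\bar x)\subset[0,1]^2$ (essentially the closure of the sequence on the $x$-axis, decorated with short vertical segments at continuous limit points to distinguish them from omitted ones), and then builds a $\mathbf{\Delta}^0_2$ set $\mathcal{A}^{\bar x}\subset\mathcal{K}([0,1]^2)$ as a Hausdorff--Kuratowski transfinite difference of closed sets $\mathcal{K}^{\bar x}_\alpha\setminus\mathcal{L}^{\bar x}_\alpha$. The transfinite length of $\bar x$ is absorbed into the length of this difference hierarchy rather than into a nested-set recursion on $X$. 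One then passes to an arbitrary uncountable Polish $X$ via the quoted result from \cite{marci1} and a Cantor-set embedding, and finally into $\mc{B}_1(X)$ by taking characteristic functions. Your sketch would need a fundamentally different idea at this step.
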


Using this theorem one can reduce every question concerning the linearly ordered
subsets of $\mathcal{B}_1(X)$ to a purely combinatorial problem. We were able to
answer all of the known such questions and we reproved easily the known theorems
as well. The most important results are:

\begin{itemize}
\item Answering a question of Laczkovich \cite{laczk}, we give a new, forcing free proof of Komj\'ath's theorem. (Theorem \ref{t:suslin})
\item The class of ordered sets representable in $\mc{B}_1(X)$ does not depend on the uncountable Polish space $X$. (Corollary \ref{c:baire1same})
\item There exists an embedding $(\mc{B}_1(X),<_p) \hookrightarrow
(\mathbf{\Delta}^0_2(X),\subsetneqq)$, hence a linear ordering is representable by Baire class $1$ functions iff it is representable by Baire class $1$ \emph{characteristic} functions. (Corollary \ref{c:characteristic})
\item The duplication of a representable linearly ordered set is representable. More generally, countable lexicographical products of representable sets are representable. (Corollary \ref{c:duplication} and Theorem \ref{t:productglue})
\item There exists a linearly ordered set that is representable in
$\mc{B}_1(X)$ but none of its completions are representable. (Theorem \ref{t:complete})
\end{itemize}

The paper is organized as follows. In Section \ref{s:main} we first prove that there exists
an embedding $\mathcal{B}_1(X) \hookrightarrow \uni$, then that $\uni \hookrightarrow \mc{B}_1(X)$. The former result heavily builds on a
theorem of Kechris and Louveau. Unfortunately for us, they only consider the case
of compact Polish spaces, while it is of crucial importance in our proof to use their
theorem for arbitrary Polish spaces. Moreover, their proof seems to contain a slight error. 
Hence it was unavoidable to reprove their result, which is the content of Section \ref{s:last}. 
Section \ref{s:known} contains the new
proofs of the known results, while in Section \ref{s:new} we answer the above open questions.
Finally, in Section \ref{s:open} we formulate some new open
problems.

\section{Preliminaries}

Our terminology will mostly follow \cite{kech} and \cite{tod}.

Let $X$ be a \emph{Polish space}, that is, a separable, completely metrizable
topological space. $\mathcal{B}_1(X)$ denotes the set of the pointwise limits of
continuous functions defined on $X$, this is called the class of \emph{Baire class $1$}
functions. 

$USC(X)$ stands for the set of \emph{upper semicontinuous} functions, that
is, the set of functions $f$ for which for every $r \in \R$ the set
$f^{-1}((-\infty,r))$ is open in $X$.  It is easy to see that the infimum of USC
functions is also USC. 

If $\mathcal{F}(X)$ is a class of real valued functions then we will denote by
$b\mathcal{F}(X)$ and $\mc{F}^+(X)$ the set of bounded and nonnegative functions
in $\mathcal{F}(X)$, respectively.

$\mathcal{K}(X)$ will stand for the set of the nonempty compact subsets of $X$ endowed
with the Hausdorff metric. It is well known (see \cite[Section 4.F]{kech}) that
if $X$ is Polish then so is $\mc{K}
(X)$. Moreover, the compactness of $X$ is equivalent to the compactness of
$\mathcal{K}(X)$.

As usual, we denote the \emph{$\xi$th additive and multiplicative Borel classes} of a Polish space $X$ by $\mathbf{\Sigma}^0_\xi(X)$ and $\mathbf{\Pi}^0_\xi(X)$, respectively. We will also use the notation $\mathbf{\Delta}^0_\xi(X)=\mathbf{\Sigma}^0_\xi(X) \cap\mathbf{\Pi}^0_\xi(X)$. We call a set $A$ \emph{ambiguous}, if $A \in \mathbf{\Delta}^0_2(X)$.  Sometimes the following equivalent definition is also used for the first Baire class: $f \in \mc{B}_1(X) \iff$ the preimage of every open set under $f$ is in $\mathbf{\Sigma}^0_2(X)$ (see \cite[24.10]{kech}). This easily implies that a characteristic function $\chi_A$
is of Baire class $1$ if and only if $A \in \mathbf{\Delta}^0_2(X)$. The above equivalent definition also implies that USC functions are of Baire class $1$.

For a function $f:X \to \R$ \emph{the subgraph of $f$} is the set $sgr(f)=\{(x,r) \in X
\times \R: r \le f(x)\}.$ Notice that a function is USC if and only if its
subgraph is closed.

Let $(P,<_p)$ be a poset. We follow the notation of S. Todor\v{c}evi\' c \cite{tod} letting
\[
\sigma P=\{F:\alpha \to P \mid \alpha \text{ is an ordinal, } F \text{ is strictly increasing}\},
\]
that is, $\sigma P$ is the set of well-ordered sequences in $P$.
We will use the notation  $\sigma^*P$ for the reverse well-ordered sequences, i. e.,
\[
\sigma^* P=\{F:\alpha \to P \mid \alpha \text{ is an ordinal, } F \text{ is strictly decreasing}\}.
\]

Then $\sigma^*[0,1]$ is the
set of strictly decreasing well-ordered transfinite sequences of reals in $[0,1]$. 

For a poset $P$, if $\bar{p} \in \sigma^*P$ and the domain of $\bar{p}$ is $\xi$ then we will write $\bar{p}$ as $(p_\alpha)_{\alpha<\xi}$, where $p_\alpha=\bar{p}(\alpha)$. We will
call the ordinal $\xi$ the length of $\bar{p}$, in symbols $l(\bar{p})$.

Let $H$ and $H'$ be two subsets of the linearly ordered set $(L,<_L)$. We will say that $H \leq_L H'$ or $H<_L H'$ if for every $h \in H$ and $h' \in H'$ we have $h \leq_L h'$ or $h<_Lh'$, respectively. A set $H \subset L$ is called \textit{convex} if for every $a,b \in H$ and $c \in L$ with $a<_Lc<_Lb$ we have $c \in H$. An \textit{interval} is a set of the form $[a,\infty)(=\{c:a \leq_L c \})$, $(a,b]$ or $(a,b)$ etc. for some $a,b \in L$. We say that a linearly ordered set is \textit{densely ordered} if it contains no neighboring points, while it is said to be \textit{separable} if it has a countable subset which intersects every nonempty open interval. Finally, $L$ is \textit{nowhere separable}, if $(a,b) \cap L$ is not separable for every $a,b \in L$.

Now if $\bar{p},\bar{p}' \in \sigma^*P$ and $\bar{p} \not \subset \bar{p}',
\bar{p}' \not \subset \bar{p}$ then there exists a minimal ordinal $\delta$ such
that $p_\delta \not =p'_\delta$. This ordinal is denoted by 
$\delta(\bar{p},\bar{p}')$.

Le $\alpha$ be a successor ordinal, then $\alpha-1$ will stand for its
predecessor. Now, since every ordinal $\alpha$ can be uniquely written in the
form $\alpha=\gamma+n$ where $\gamma$ is limit and $n$ is finite, we let
$(-1)^\alpha=(-1)^n$ and refer to the parity of $n$ as the parity of $\alpha$.

A poset $(T,<_T)$ is called a tree if for every $t \in T$ the ordering $<_T$
restricted to the set $\{s:s<_T t\}$ is a well-ordering. We denote by
$Lev_{\alpha}(T)$ the $\alpha$th level of $T$, that is, the set $\{t \in T:
<_T|_{\{s:s<_T t\}} \text{ has order type } \alpha\}$. If $t \in T$ with $t \in Lev_\alpha(T)$ it can be identified with an enumeration (i. e., a strictly increasing bijection) $e_t:\alpha+1 \to \{s \in T: s \leq_T t\}$. So, for an ordinal $\beta$ we can talk about $t|_\beta$ which is the map $e_t|_\beta$. In particular, if $\alpha$ is a successor (note that $t \in Lev_\alpha(T)$), we will denote by $t|_\alpha$ the predecessor of $t$. 

An \emph{$\alpha$-chain} $C$ is a subset
of a tree such that $<_T|_C$ is a well-ordering in type $\alpha$, whereas an
\emph{antichain} is a set that consists of $\leq_T$-incomparable elements. A set $D \subset T$ is called \emph{dense} if for every $t \in T$ there exists a $p \in D$ such that $t \leq_T p$. A set is called \emph{open} if if for every $p \in D$ we have $\{t \in T: t \geq_T p\} \subset D$.

A tree $(T,<_T)$ of cardinality $\aleph_1$ is called an Aronszajn tree, if for
every $\alpha<\omega_1$ we have $|Lev_{\alpha}(T)| \leq \aleph_0$ and $T$
contains no $\omega_1$-chains. An Aronszajn tree is called a Suslin tree if it
contains no uncountable antichains.

A Suslin line is a linearly ordered set that is ccc (it contains no uncountable pairwise disjoint collection of nonempty open intervals) but not separable. 

We will call a poset $(P,<_P)$ $\R$-special ($\mathbb{Q}$-special) if there
exists an embedding $P \hookrightarrow \R$ ($P \hookrightarrow \mathbb{Q}$).

Every ordinal is identified with the set of its predecessors, in particular, $2 = \{0, 1 \}$.

\section{The main result}
\label{s:main}

\subsection{\texorpdfstring{$\mc{B}_1(X) \hookrightarrow([0,1]^{<\omega_1}_{\searrow 0},<_{altlex})$}{B1(X)->([0,1]omega1->0,<altlex)}}

Recall that
\[
[0,1]^{<\omega_1}_{\searrow 0}=\{\bar{x} \in \sigma^*[0,1]: \min \bar{x} =0\}
\]
and also that
for $\bar{x}=(x_\alpha)_{\alpha\leq \xi}, \bar{x}'=(x'_\alpha)_{\alpha\leq
\xi'} \in [0,1]^{<\omega_1}_{\searrow 0}$ distinct and
$\delta=\delta(\bar{x},\bar{x}')$ we say that
\[
(x_\alpha)_{\alpha\le \xi} <_{altlex} (x'_\alpha)_{\alpha\le \xi'} \iff
(\delta \text{ is even and } x_{\delta}<x'_{\delta}) \text{ or } (\delta \text{ is odd and } x_{\delta}>x'_{\delta}).
\]

\begin{theorem}
\label{t:emb} 
Let $X$ be a Polish space. Then $\mc{B}_1(X) \hookrightarrow
[0,1]^{<\omega_1}_{\searrow 0}$.
\end{theorem}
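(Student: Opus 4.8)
The plan is to embed $\mc{B}_1(X)$ into $[0,1]^{<\omega_1}_{\searrow 0}$ by assigning to each Baire class $1$ function $f$ a strictly decreasing transfinite sequence of reals that records, step by step, ``how $f$ sits'' relative to the pointwise order, in a way that is compatible with the alternating lexicographical comparison. Since $<_{altlex}$ flips direction at each successor, the natural object to attach to $f$ is a descending chain built by iterating a two-step process: first extract a real that is monotone in $f$, then pass to a residual object on which the order direction has reversed, extract another real there, and so on transfinitely. Concretely, I would first reduce to $b\mc{B}_1^+(X)$ (bounded nonnegative Baire class $1$ functions): composing $f$ with a fixed increasing homeomorphism $\R \to (0,1)$ is a pointwise-order embedding, so it suffices to represent bounded functions with values in $(0,1)$.

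The key device is the theorem of Kechris–Louveau (reproved for arbitrary Polish $X$ in Section~\ref{s:last}) describing the structure of Baire class $1$ functions via decreasing transfinite sequences of upper semicontinuous functions, equivalently via the rank/decomposition of $f$ into USC pieces. Using this, for $f \in b\mc{B}_1^+(X)$ one gets a canonical decreasing well-ordered sequence of USC functions $u_0 \ge u_1 \ge \cdots$ converging to $f$ (or $-f$, alternating), and the ``amount'' one can shave off at stage $\alpha$ — say the supremum over $x$ of the gap, or some integral/oscillation-type real attached to the $\alpha$th USC layer — gives the real $x_\alpha \in [0,1]$ to be placed at coordinate $\alpha$ of the sequence $\Phi(f)$. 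I would arrange the construction so that the sequence is strictly decreasing (discarding stages where nothing is shaved, or re-indexing) and terminates with $0$, so that $\Phi(f) \in [0,1]^{<\omega_1}_{\searrow 0}$; the fact that the lengths stay below $\omega_1$ comes from Kuratowski's theorem (there is no $\omega_1$-chain) together with separability, exactly the boundedness on ranks that the Kechris–Louveau analysis provides. The heart of the verification is then: if $f <_p g$, and $\delta$ is the first coordinate where $\Phi(f)$ and $\Phi(g)$ differ, then the USC layers of $f$ and $g$ agree up to stage $\delta$, at stage $\delta$ the function with the larger ($\delta$ even) or smaller ($\delta$ odd) shaved amount is the $<_p$-larger one — this is where the alternation of $<_{altlex}$ must be matched to the alternation of ``approximating from above versus from below'' inherent in the USC decomposition. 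So the design of which real to extract at each stage must be pinned down precisely to make this parity bookkeeping work; that is the crux.

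The main obstacle I expect is precisely this coherence/monotonicity step: ensuring that the assignment $\alpha \mapsto x_\alpha$ depends only on the common ``past'' of $f$ and $g$ up to stage $\delta$, so that $\Phi(f)$ and $\Phi(g)$ genuinely share a common initial segment exactly up to $\delta$, and that at stage $\delta$ the comparison of the extracted reals faithfully reflects $f <_p g$ with the correct parity-dependent direction. This requires the USC decomposition to be canonical enough that $f \le_p g$ forces a layer-by-layer domination of the associated USC sequences, and that the real extracted at a stage be a monotone functional of the residual function being approximated at that stage. A secondary technical point is handling limit stages — showing the extracted reals still strictly decrease and that the process closes off correctly with last element $0$ — and making sure the total length is a genuine countable ordinal; both should follow from the rank bounds in the Kechris–Louveau machinery of Section~\ref{s:last} together with Kuratowski's no-$\omega_1$-chain result, but they need to be stated carefully. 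Once the functional and the decomposition are fixed, verifying $f <_p g \Rightarrow \Phi(f) <_{altlex} \Phi(g)$ becomes a transfinite induction on $\delta$ that is conceptually routine but notationally delicate.
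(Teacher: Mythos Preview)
Your overall strategy matches the paper's: reduce to $b\mc{B}_1^+(X)$ by composing with an order-isomorphism $\R\to(0,1)$, apply the Kechris--Louveau decomposition $\Phi$ to get a strictly decreasing transfinite sequence $(f_\alpha)_{\alpha\le\xi}$ of nonnegative bounded USC functions ending in the zero function, verify that $f<_p g$ forces the correct parity-dependent comparison at the first place where $\Phi(f)$ and $\Phi(g)$ differ (this is exactly Proposition~\ref{p:embpos}, proved by an easy induction on the $g_\alpha$'s), and finally convert the USC sequence into a sequence of reals in $[0,1]$.

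The gap is in this last step. You propose extracting at stage $\alpha$ ``the supremum over $x$ of the gap, or some integral/oscillation-type real attached to the $\alpha$th USC layer.'' None of these work: two distinct comparable USC functions can have the same supremum, the same integral (against any fixed measure), and the same oscillation, so none of these gives a \emph{strictly} order-preserving map from $(USC^+(X),<_p)$ to $\R$. What you actually need, and correctly name abstractly (``a monotone functional of the residual function''), is a strictly order-preserving injection $\Psi_0\colon USC^+(X)\hookrightarrow[0,1]$ sending the zero function to $0$. The paper supplies this as Lemma~\ref{l:embed}: fix a countable basis $\{B_n\}$ of $X\times[0,\infty)$ and set $\Psi_0(f)=1-\sum_{B_n\cap\,\mathrm{sgr}(f)=\emptyset}2^{-n-1}$; since subgraphs of USC functions are closed, $f<_p g$ implies $\mathrm{sgr}(f)\subsetneq\mathrm{sgr}(g)$, hence some $B_n$ separates them and $\Psi_0(f)<\Psi_0(g)$. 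Once you have $\Psi_0$, apply it coordinatewise to $\Phi(f)$; strict monotonicity of $\Psi_0$ turns the strictly decreasing USC sequence into a strictly decreasing real sequence ending in $0$, with no re-indexing needed.

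A minor correction: the Kechris--Louveau sequence $(f_\alpha)$ does not ``converge to $f$''; rather $f$ is recovered as the generalized alternating sum $\sum^*_{\alpha<\xi}(-1)^\alpha f_\alpha$ (Proposition~\ref{p:bb}(3)). This distinction does not affect your argument, since all you use is that $\Phi$ is injective on $b\mc{B}_1^+(X)$ and satisfies the parity comparison of Proposition~\ref{p:embpos}.
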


In order to prove the theorem we have to make some preparation. We will use results
of Kechris and Louveau \cite{KL}. They developed a method to decompose a Baire class 1 function into a sum of a transfinite alternating series,
which is analogous to the well known Hausdorff-Kuratowski analysis of
$\mathbf{\Delta}^0_2$ sets.

First we define the generalized sums.
\begin{definition} (\cite{KL})
Suppose that $(f_{\beta})_{\beta<\alpha}$ is a pointwise decreasing sequence of nonnegative
bounded USC functions for an ordinal $\alpha<\omega_1$. Let us define the
\emph{generalized alternating sum} $\sum^*_{\beta<\alpha} (-1)^\beta f_{\beta}$
by induction on $\alpha$ as follows:
\[\scalebox{1.3}{$\Sigma$}_{\beta < 0}^{*} (-1)^\beta
f_{\beta}=0\]
and 
\[\scalebox{1.3}{$\Sigma$}_{\beta < \alpha}^{*} (-1)^\beta
f_{\beta}=\scalebox{1.3}{$\Sigma$}_{\beta<\alpha-1}^* (-1)^\beta
f_{\beta}+(-1)^{\alpha-1} f_{\alpha-1}\] if $\alpha$ is a successor and 
\[\scalebox{1.3}{$\Sigma$}_{\beta < \alpha}^* (-1)^\beta f_{\beta}=\sup \{
\scalebox{1.3}{$\Sigma$}_{\gamma<\beta}^* (-1)^{\gamma} f_{\gamma}:
\beta<\alpha, \beta \text{ even}\}\] if $\alpha>0$ is a limit.
\end{definition}

Every nonnegative bounded Baire class $1$ function can be canonically decomposed
into such a sum. For this we need the notion of upper regularization.

\begin{definition} (\cite{KL}) Let $f:X \to \R$ be a nonnegative bounded function. The \emph{upper
regularization} of $f$ is defined as \[\hat{f}=\inf \{g:f \leq_p g, g \in
\text{USC}(X)\}. \]
\end{definition}
Note that $\hat{f}$ is USC, since the infimum of USC functions is USC. Also,
clearly $\hat{f}=f$ if $f$ is USC. 

\begin{definition} (\cite{KL}) 
 \label{d:konst}
Let
\[g_0=f, f_0=\widehat{g_0},\] 
if $\alpha$ is a successor then let \[g_{\alpha}=f_{\alpha-1}-g_{\alpha-1},
f_{\alpha}=\widehat{g}_{\alpha},\]
if $\alpha>0$ is a limit then let 
\[
 g_{\alpha}= \inf_{\substack{\beta<\alpha \\ \beta \text{ even}}} g_{\beta}
\text{ and } f_{\alpha}=\widehat{g}_{\alpha}. 
\]

Now if there exists a minimal $\xi_f$ such that $f_{\xi_f} \equiv f_{\xi_f+1}$ then let
$\Phi(f)=(f_\alpha)_{\alpha \leq \xi_f}$. 
\end{definition}

Note that we need some results of Kechris and Louveau for arbitrary Polish
spaces, however in \cite{KL} the authors proved the theorems only in the
compact Polish case, although the proofs still work for the general case as
well.
Unfortunately, in our proof the non-$\sigma$-compact statement plays a
significant role, hence we must check the validity of their results on such
spaces. The results used are summarized in Proposition \ref{p:bb} and the proof
can be found in Section \ref{s:last}. Notice that the original proof seems to contain a
small error, but it can be corrected with the same ideas. 
 \begin{proposition} (\cite{KL})
\label{p:bb} Let $X$ be a Polish space and $f \in b\mathcal{B}^+_1(X)$. Then $\Phi(f)$ is defined,
$\Phi(f)\in \sigma^*bUSC^+$ and we have 
\begin{enumerate}[(1)]
	  \item $f=\sum^*_{\beta<\alpha} (-1)^\beta
f_{\beta}+(-1)^{\alpha}g_\alpha$ for every $\alpha \leq \xi_f$,
	  \item $f_{\xi_f} \equiv0,$
      \item $f=\sum^*_{\alpha<\xi_f} (-1)^\alpha f_{\alpha}$.
         \end{enumerate}

\end{proposition}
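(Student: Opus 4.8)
The plan is to run the Kechris--Louveau construction of Definition \ref{d:konst} and verify the four assertions in the order: first the general identity (1), which serves as the engine of the whole argument; then the structural claims that $\Phi(f)$ is defined and lies in $\sigma^* bUSC^+$; then the key vanishing statement (2); and finally (3), which is an immediate consequence of (1) and (2).

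First I would record the elementary closure properties of the recursion. Since $\widehat{h}\geq h$ for every $h$, an easy transfinite induction shows that each $g_\alpha$ is nonnegative and bounded: at successors $g_\alpha=f_{\alpha-1}-g_{\alpha-1}=\widehat{g}_{\alpha-1}-g_{\alpha-1}\geq 0$, and at limits $g_\alpha$ is an infimum of nonnegative functions. Consequently each $f_\alpha=\widehat{g}_\alpha\in bUSC^+$. Next I would prove that $(f_\alpha)$ is pointwise decreasing: at a successor, $g_{\alpha+1}=f_\alpha-g_\alpha\leq f_\alpha$ and $f_\alpha$ is USC, so $f_{\alpha+1}=\widehat{g}_{\alpha+1}\leq f_\alpha$; at a limit, $g_\alpha\leq g_\beta$ for every even $\beta<\alpha$, whence by monotonicity of $h\mapsto\widehat{h}$ we get $f_\alpha\leq f_\beta$, and the remaining indices are handled by the cofinality of the even ordinals together with the inductive hypothesis. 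Identity (1) is then a direct transfinite induction: the base case is $f=0+g_0$; the successor step uses $\sum^*_{\beta<\alpha+1}(-1)^\beta f_\beta=\sum^*_{\beta<\alpha}(-1)^\beta f_\beta+(-1)^\alpha f_\alpha$ together with $(-1)^{\alpha+1}g_{\alpha+1}=-(-1)^\alpha f_\alpha+(-1)^\alpha g_\alpha$; and the limit step uses that for even $\beta$ the inductive hypothesis reads $\sum^*_{\gamma<\beta}(-1)^\gamma f_\gamma=f-g_\beta$, so taking the supremum over even $\beta<\alpha$ converts $\inf$ into $\sup$ and yields $\sum^*_{\beta<\alpha}(-1)^\beta f_\beta=f-g_\alpha$ (recall that limits are even).

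For definedness I would argue that the construction must stabilize at a countable ordinal. Since each $f_\alpha$ is USC, its subgraph $sgr(f_\alpha)$ is closed, and because $(f_\alpha)$ is decreasing these subgraphs form a $\subseteq$-decreasing transfinite sequence of closed subsets of the second countable space $X\times\R$; equivalently, their complements form an increasing sequence of open sets. An increasing transfinite sequence of open sets in a second countable space can increase strictly only countably often (each strict increase absorbs a new basic open set), hence it is constant from some countable $\eta$ on; in particular $f_\eta\equiv f_{\eta+1}$. Thus the least $\xi$ with $f_\xi\equiv f_{\xi+1}$ exists and is countable, so $\Phi(f)=(f_\alpha)_{\alpha\le\xi}$ is defined. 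Strictness of the decrease on $[0,\xi]$ follows from the minimality of $\xi$: for $\alpha<\xi$ we have $f_{\alpha+1}\neq f_\alpha$ and $f_{\alpha+1}\le f_\alpha$, so $f_\beta\le f_{\alpha+1}<f_\alpha$ whenever $\alpha<\beta\le\xi$; hence $\Phi(f)\in\sigma^*bUSC^+$.

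The main obstacle is the vanishing statement (2), $f_\xi\equiv 0$. I would reduce it to the following self-contained claim about a single Baire class $1$ function: if $g\in b\mathcal{B}_1^+(X)$, $u=\widehat{g}$, and $\widehat{u-g}=u$, then $u\equiv 0$. Indeed, writing $g=g_\xi$ and $u=f_\xi=\widehat{g_\xi}$, the stabilization $f_\xi\equiv f_{\xi+1}=\widehat{f_\xi-g_\xi}$ is exactly the hypothesis $\widehat{u-g}=u$, and $u\equiv0$ then gives $f_\xi\equiv0$. This claim genuinely requires the Baire class $1$ hypothesis: for an arbitrary bounded $g$ it fails (e.g. $g=\chi_{\mathbb{Q}\cap[0,1]}$ gives $u\equiv1$ while $\widehat{u-g}\equiv1$), so the proof must exploit that every restriction $g|_F$ to a closed set has a comeager set of points of continuity. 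Arguing by contradiction, if $u\not\equiv0$ then $F_r=\{u\ge r\}$ is nonempty and closed for some $r>0$; working inside $F_r$ one locates a point of continuity of $g|_{F_r}$ at which the two regularization identities $\widehat{g}=u$ and $\widehat{u-g}=u$ force approximating sequences from outside $F_r$ along which $g$ oscillates by an amount bounded below, contradicting continuity. In the compact case of \cite{KL} one instead passes to a point where the USC function $u$ attains its maximum; since this maximum need not be attained on a general (in particular non-$\sigma$-compact) Polish space, this is precisely the step where their argument must be replaced by the Baire category argument above, which is carried out in Section \ref{s:last}. Finally, (3) is immediate from (1) and (2): since $0\le g_\xi\le\widehat{g_\xi}=f_\xi\equiv0$ we have $g_\xi\equiv0$, so the instance $\alpha=\xi$ of (1) reads $f=\sum^*_{\beta<\xi}(-1)^\beta f_\beta$.
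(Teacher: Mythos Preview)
Your overall plan matches the paper's proof in Section~\ref{s:last} closely: the monotonicity lemma, the second-countability/subgraph argument for stabilization, identity~(1) by transfinite induction, reduction of~(2) to a claim about a single $g$ with $\widehat{\widehat{g}-g}=\widehat{g}$, and~(3) as a consequence of~(1) and~(2) are all exactly what the paper does.

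There is, however, one genuine omission. To apply your claim for~(2) you need $g_\xi\in b\mathcal{B}_1^+(X)$, and you never justify this. The point is not automatic: the recursion preserves Baire class~$1$ at successor steps, but at a limit $g_\alpha=\inf_{\beta<\alpha,\ \beta\text{ even}} g_\beta$ is only an infimum of countably many Baire class~$1$ functions, which in general is merely Baire class~$2$. The paper closes this gap by proving a separate lemma (Lemma~\ref{l:sumb1}) that for any $(f_\alpha)_{\alpha<\xi}\in\sigma^*USC^+$ the generalized sum $\sum^*_{\alpha<\xi}(-1)^\alpha f_\alpha$ is Baire class~$1$; once this is known, identity~(1) gives $(-1)^{\alpha+1}g_\alpha=\sum^*_{\beta<\alpha}(-1)^\beta f_\beta-f$, and hence each $g_\alpha$, in particular $g_\xi$, is Baire class~$1$. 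The proof of Lemma~\ref{l:sumb1} is itself nontrivial at limit stages: one rewrites the supremum defining the sum as a difference of an infimum of Baire class~$1$ functions and a USC function, and then uses that a function which is simultaneously a countable infimum and a countable supremum of Baire class~$1$ functions is Baire class~$1$. You should insert this lemma between your proof of~(1) and the claim for~(2).

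A smaller remark on your sketch for~(2): the paper works not with $F_r=\{u\ge r\}$ but with $K=\overline{\{x:g(x)>\varepsilon\}}$, and uses the oscillation form of the Baire class~$1$ characterization to find an open $V$ with $osc(g,K\cap V)<\varepsilon$; from this one shows $\widehat{\widehat{g}-g}\le\varepsilon$ on $V$ while $\widehat{g}>\varepsilon$ at a point of $V\cap\{g>\varepsilon\}$. Your point-of-continuity formulation is in the same spirit and can be made to work, but as written it is too schematic to see that the ``approximating sequences from outside $F_r$'' actually arise.
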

\begin{proof}See Section \ref{s:last}. \end{proof}
\begin{proposition}\label{p:embpos}
  Let $X$ be a Polish space and $f_0,f_1 \in b\mc{B}^+_1(X)$. Suppose that $f_0<_pf_1$ and let $\Phi(f_0)=(f^0_\alpha)_{\alpha \leq \xi_{f_0}}$
and $\Phi(f_1)=(f^1_\alpha)_{\alpha \leq \xi_{f_1}}$. Then $\Phi(f_0)\not =
\Phi(f_1)$ and if $\delta=\delta(\Phi(f_0),\Phi(f_1))$ then
$f^0_{\delta}<_pf^1_{\delta}$ if $\delta$ is even and
$f^0_{\delta}>_pf^1_{\delta}$ if $\delta$ is odd.
\end{proposition}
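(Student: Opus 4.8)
The plan is to compare the two canonical decompositions $\Phi(f_0)=(f^0_\alpha)_{\alpha\le\xi_0}$ and $\Phi(f_1)=(f^1_\alpha)_{\alpha\le\xi_1}$ by induction on ordinals, carrying along not just the USC functions $f^i_\alpha$ but also the ``remainders'' $g^i_\alpha$ from Definition \ref{d:konst}. The key invariant I would try to maintain is the following: if $\Phi(f_0)$ and $\Phi(f_1)$ agree on all coordinates $\beta<\alpha$ (i.e. $f^0_\beta\equiv f^1_\beta$ for all $\beta<\alpha$, which in particular forces both lengths to be $\ge\alpha$), then for the remainders one has $g^0_\alpha\le_p g^1_\alpha$ if $\alpha$ is even and $g^0_\alpha\ge_p g^1_\alpha$ if $\alpha$ is odd; and consequently, after taking upper regularizations, $f^0_\alpha\le_p f^1_\alpha$ (resp. $\ge_p$) with the same parity convention. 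The base case $\alpha=0$ is exactly the hypothesis $f_0<_p f_1$: here $g^0_0=f_0\le_p f_1=g^1_0$, and since $g\mapsto\hat g$ is monotone (the infimum defining $\hat g$ is over a larger family when $g$ is smaller), $f^0_0=\widehat{g^0_0}\le_p\widehat{g^1_0}=f^1_0$.

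Next I would run the inductive step in the three usual cases. At a successor $\alpha$, assuming $f^0_\beta\equiv f^1_\beta$ for $\beta<\alpha$ (so in particular $f^0_{\alpha-1}\equiv f^1_{\alpha-1}$ and, by the induction hypothesis applied at $\alpha-1$, the remainders $g^0_{\alpha-1}, g^1_{\alpha-1}$ are ordered with the parity of $\alpha-1$): then $g^i_\alpha=f^i_{\alpha-1}-g^i_{\alpha-1}$, so subtracting the ordered remainders $g^i_{\alpha-1}$ from the common function $f^0_{\alpha-1}=f^1_{\alpha-1}$ flips the inequality, giving $g^0_\alpha\le_p g^1_\alpha$ iff $\alpha-1$ is odd iff $\alpha$ is even — exactly the desired parity. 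Apply monotonicity of $\widehat{\ \cdot\ }$ again to get the inequality for $f^i_\alpha$. At a limit $\alpha$, $g^i_\alpha=\inf_{\beta<\alpha,\ \beta\text{ even}}g^i_\beta$; by the induction hypothesis each $g^0_\beta\le_p g^1_\beta$ for even $\beta<\alpha$, and the infimum of pointwise-smaller functions is pointwise-smaller, so $g^0_\alpha\le_p g^1_\alpha$, and $\alpha$ limit is even by the paper's convention, so again the parity matches; regularize as before.

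Finally I would derive the conclusion. Let $\delta$ be the least coordinate where the two sequences differ (it exists: if neither were a proper initial segment of the other we are done by definition of $\delta(\cdot,\cdot)$, and I would rule out the degenerate cases using Proposition \ref{p:bb}(2), which says $f^i_{\xi_i}\equiv 0$ — if $\Phi(f_0)$ were a proper initial segment of $\Phi(f_1)$ then $f^1_{\xi_0}\equiv f^0_{\xi_0}\equiv 0$, and from Proposition \ref{p:bb}(1) both $f_0$ and $f_1$ would equal the same generalized alternating sum $\sum^*_{\beta<\xi_0}(-1)^\beta f^1_\beta$ plus a zero remainder term, contradicting $f_0<_p f_1$; symmetrically for the other containment). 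Since $\delta$ is least with $f^0_\delta\not\equiv f^1_\delta$, the invariant applies at $\alpha=\delta$: $f^0_\delta\le_p f^1_\delta$ if $\delta$ even and $f^0_\delta\ge_p f^1_\delta$ if $\delta$ odd; combined with $f^0_\delta\not\equiv f^1_\delta$ this upgrades to the strict $<_p$ (resp. $>_p$) in the statement. The main obstacle I anticipate is purely bookkeeping around the regularization step: one must be careful that $\widehat{g^0_\alpha}$ and $\widehat{g^1_\alpha}$ can actually fail to be distinct even when $g^0_\alpha\ne g^1_\alpha$, so the ``not equal'' part of the conclusion genuinely needs $\delta$ to be the first place the $f$-sequences (not the $g$-sequences) differ — hence the invariant is stated for the $g$'s but the conclusion is read off from the $f$'s, and one has to make sure these two indexings are threaded together correctly through the limit stages.
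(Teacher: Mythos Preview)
Your proposal is correct and follows essentially the same argument as the paper: a transfinite induction showing $g^0_\alpha\le_p g^1_\alpha$ for even $\alpha\le\delta$ and $g^0_\alpha\ge_p g^1_\alpha$ for odd $\alpha\le\delta$, using $f^0_{\alpha-1}\equiv f^1_{\alpha-1}$ at successor stages and the infimum definition at limit stages, then passing to the $f^i_\delta$ via monotonicity of the upper regularization. The only cosmetic difference is that the paper dispatches $\Phi(f_0)\neq\Phi(f_1)$ in one line from part (3) of Proposition \ref{p:bb} (equal canonical decompositions give equal functions), whereas you argue via parts (1) and (2); since both $\Phi(f_i)$ are strictly decreasing nonnegative sequences terminating at $0$, neither can be a proper initial segment of the other anyway, so your extra case analysis is harmless but not needed.
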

\begin{proof}

First notice that if $f_0 \not =f_1$ then by (3) of Proposition \ref{p:bb} we
have that $\Phi(f_0) \not =\Phi(f_1)$.

Let $(g^0_{\beta})_{\beta\leq \xi_{f_0}}$ and $(g^1_{\beta})_{\beta \leq \xi_{f_1}}$ be
the appropriate sequences (used in Definition \ref{d:konst} with
$\widehat{g^i_\beta}=f^i_\beta$). 

We show by induction on $\beta$ that for every even ordinal $\beta \leq \delta$ we have
$g^0_{\beta} \leq_p g^1_{\beta}$  and for every odd ordinal $\beta \leq \delta$ we have
$g^0_{\beta} \geq_p g^1_{\beta}$. 

For $\beta=0$ by definition $g^0_0=f_0$ and $g^1_0=f_1$, so $g^0_0 \leq_p
g^1_0$.

Suppose that we are done for every $\gamma<\beta$.

\begin{itemize}
 \item for limit $\beta$ we have that \[g^0_{\beta}=
\inf_{\substack{\gamma<\beta \\ \gamma \text{ even}}} g^0_{\gamma}\] so by the
inductive hypothesis obviously $g^0_{\beta} \leq_p g^1_{\beta}$.
 \item if $\beta$ is an odd ordinal, since $\beta-1 < \delta$ we have
$f^0_{\beta-1}=f^1_{\beta-1}$ so 
 \[g^0_{\beta}=f^0_{\beta-1}-g^0_{\beta-1} \geq_p
f^0_{\beta-1}-g^1_{\beta-1}=f^1_{\beta-1}-g^1_{\beta-1}=g^1_{\beta}\] 
 by $\beta-1$ being even and using the inductive hypothesis.
 \item if $\beta$ is an even successor, the calculation is similar, using that
$g^0_{\beta-1}  \geq_p g^1_{\beta-1}$ we obtain
 \[g^0_{\beta}=f^0_{\beta-1}-g^0_{\beta-1} \leq_p
f^0_{\beta-1}-g^1_{\beta-1}=f^1_{\beta-1}-g^1_{\beta-1}=g^1_{\beta}.\] 
 
\end{itemize}

Consequently, the induction shows that $g^0_{\delta} \leq_p g^1_{\delta}$ if
$\delta$ is even and $g^0_{\delta} \geq_p g^1_{\delta}$ if $\delta$ is odd.
Therefore, since $\widehat{g^i_{\delta}}=f^i_{\delta}$ we have that
$f^0_{\delta} \leq_p f^1_{\delta}$ if $\delta$ is even and $f^0_{\delta} \geq_p
f^1_{\delta}$ if $\delta$ is odd. But by the definition of $\delta$ it is clear
that $f^0_{\delta}\not =f^1_{\delta}$, hence $f^0_{\delta} <_p f^1_{\delta}$ if
$\delta$ is even and $f^0_{\delta} >_p f^1_{\delta}$ if $\delta$ is odd. This
finishes the proof of Proposition \ref{p:embpos}.
\end{proof}

Now to finish the proof of Theorem \ref{t:emb} we need the following folklore lemma. 

\begin{lemma}
 \label{l:embed}
 There exists an order preserving embedding $\Psi_0: USC^+(X) \hookrightarrow
[0,1]$ where the image of the function $f \equiv 0$ is $0$. In particular, there
is no uncountable strictly monotone transfinite sequence in $USC^+(X)$.
\end{lemma}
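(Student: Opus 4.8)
The plan is to exploit the fact that $X$ is separable, so fix a countable dense set $D = \{d_n : n \in \N\} \subseteq X$. The key observation is that an upper semicontinuous function is determined by its values on a dense set in the following one-sided sense: if $f, g \in USC^+(X)$ and $f <_p g$, then there is a point $x$ with $f(x) < g(x)$, and by upper semicontinuity of $f$ together with density of $D$ one can find some $d_n \in D$ with $f(d_n) < g(d_n)$ as well. Indeed, pick a rational $q$ with $f(x) < q < g(x)$; the set $\{y : f(y) < q\}$ is open and nonempty (it contains $x$), so it meets $D$ in some $d_n$, and then $f(d_n) < q \le g(d_n)$ since $f \le_p g$ everywhere. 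This shows that the evaluation map $f \mapsto (f(d_n))_{n \in \N}$ is a strictly monotone map from $(USC^+(X), <_p)$ into $([0, \infty)^{\N}, <_p)$, after first composing with a homeomorphism $[0,\infty) \to [0,1)$ (say $t \mapsto t/(1+t)$) that sends $0$ to $0$, to land in $[0,1)^{\N}$ — note $USC^+$ need not consist of bounded functions, so this rescaling is what makes the target a product of bounded intervals.

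Next I would reduce $([0,1)^{\N}, <_p)$ to $([0,1], \le)$. The pointwise order on a countable product of copies of $[0,1]$ embeds into $[0,1]$ itself: write each coordinate value in a base-$b$ expansion and interleave the digits so that pointwise-larger sequences map to larger reals. A clean way to do this is to use a sufficiently sparse base. Concretely, map $(t_n)_{n \in \N}$ with $t_n \in [0,1)$ to $\sum_{n} t_n \cdot b^{-(n+1)}$ with $b$ large enough (e.g. $b = 3$ works, or one can be more careful), arranged so that increasing any single coordinate strictly increases the sum and decreasing none of the others can undo it — this is immediate once the "weights" $b^{-(n+1)}$ dominate the tails. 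A subtle point: $\sum_n t_n b^{-(n+1)}$ with $t_n < 1$ and $b \ge 2$ stays in $[0, \frac{1}{b-1})$, and strict monotonicity of coordinatewise order is preserved because if $(t_n) <_p (t'_n)$ then $t_n \le t'_n$ for all $n$ and $t_m < t'_m$ for some $m$, so the sum strictly increases. Composing, we obtain $\Psi_0 : USC^+(X) \hookrightarrow [0,1]$, and since $f \equiv 0$ maps to $(0,0,\dots)$ and then to $0$, the normalization is automatic.

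The "in particular" clause is then immediate: $([0,1], \le)$ (as a subset of $\R$) contains no uncountable strictly monotone transfinite sequence, since such a sequence would be an uncountable well-ordered or reverse-well-ordered subset of $\R$, impossible by separability of $\R$; pulling back along $\Psi_0$ (which is strictly monotone, hence injective on the linearly ordered image of any chain) gives the conclusion for $USC^+(X)$. I do not expect any real obstacle here — the lemma is genuinely folklore — but the one place to be careful is the first step, making sure that $<_p$ (which requires $\le$ everywhere and $<$ somewhere) is respected: the argument above gives $f(d_n) \le g(d_n)$ for all $n$ from $f \le_p g$ trivially, and strictness at some $d_n$ from the upper semicontinuity argument, so both halves of $<_p$ transfer correctly.
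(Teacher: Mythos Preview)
There is a genuine gap in your first step. You claim that if $f <_p g$ in $USC^+(X)$ then some $d_n \in D$ satisfies $f(d_n) < g(d_n)$, and you justify the inequality ``$q \le g(d_n)$'' by ``$f \le_p g$ everywhere''. But $f \le_p g$ only gives $f(d_n) \le g(d_n)$; it says nothing about how $g(d_n)$ compares to $q$. Upper semicontinuity of $f$ lets you push $f$ below $q$ on a neighborhood of $x$, but $g$ is also only upper semicontinuous, so it can drop below $q$ on that same neighborhood.

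Concretely, take $X = [0,1]$, $D = \mathbb{Q} \cap [0,1]$, $f \equiv 0$, and $g = \chi_{\{p\}}$ for some irrational $p \in [0,1]$. Then $g$ is USC and nonnegative, $f <_p g$, yet $f(d_n) = g(d_n) = 0$ for every $n$. So the evaluation map $f \mapsto (f(d_n))_{n}$ is not strictly monotone, and the rest of your construction collapses. Two distinct USC functions can agree on a dense set; this is exactly the opposite of LSC behavior.

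The paper avoids this by working with the \emph{subgraph} $sgr(f) = \{(x,r) : r \le f(x)\} \subset X \times [0,\infty)$, which is closed precisely when $f$ is USC. Fixing a countable basis $\{B_n\}$ of $X \times [0,\infty)$, one sets $r_f = 1 - \sum_{B_n \cap sgr(f) = \emptyset} 2^{-n-1}$. If $f <_p g$ then $sgr(f) \subsetneqq sgr(g)$, and since $sgr(f)$ is closed there is some $B_n$ meeting $sgr(g)$ but not $sgr(f)$, forcing $r_f < r_g$. The point is that closed sets \emph{are} separated by a countable basis, whereas they are not separated by a countable dense set. Your argument can be repaired along these lines, but not as written.
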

\begin{proof}
Fix a countable basis
$\{B_n:n \in \omega\}$ of $X \times [0,\infty)$. Assign to each $f \in USC^+(X)$
the real \[r_f=1-\sum_{B_n \cap sgr(f) =\emptyset} 2^{-n-1}.\]
 If $f<_pg$ then $sgr(f) \subsetneqq sgr(g)$ so, as the subgraph of an USC function is a closed set, there exists an $n \in \omega$ such
that $B_n$ is an open neighborhood of a point in $sgr(g) \setminus sgr(f)$.
Thus, $\{n:B_n \cap sgr(f) =\emptyset\} \supsetneqq \{n:B_n \cap sgr(g)
=\emptyset\}$.  Consequently, $r_f<r_g$.
\end{proof} 
\begin{proof}[Proof of Theorem \ref{t:emb}] 
Let $\Psi:\sigma^*USC^+(X) \to \sigma^*[0,1]$ be the map that applies the above $\Psi_0$ to every coordinate of the sequences in $\sigma^*USC^+(X)$. Thus, $\Psi$ is order preserving coordinate-wise.

Clearly, $h(x)=\frac{1}{\pi} \arctan (x) + 1$ is an order preserving
homeomorphism from $\R$ to $(0,1)$ and for $f \in \mathcal{B}_1(X)$ let $H(f)=h
\circ f$. Composing the functions in $\mc{B}_1(X)$ with $h$ we still have Baire class $1$ functions and this does not effect the pointwise ordering. Thus, $H$ is an order preserving map from $\mc{B}_1(X)$ into $b\mc{B}^+_1(X)$.

Let $\Theta=\Psi \circ \Phi \circ H$. Notice that as $H:\mathcal{B}_1(X) \to
b\mc{B}^+_1(X)$, $\Phi:b\mc{B}^+_1(X) \to \sigma^*bUSC^+(X)$ and  $\Psi:\sigma^*USC(X)
\to \sigma^*[0,1]$, the map $\Theta$ is well defined.

Now, by Lemma \ref{l:embed} we have that $\Psi_0$ maps the constant zero
function to zero and by $(2)$ of Proposition \ref{p:bb} we have that for every function
$f$ its $\Phi$ image ends with the constant zero function. Thus, the
$\Theta$ image of every function $f$ ends with zero. Therefore, $\Theta$ maps into
$[0,1]^{<\omega_1}_{\searrow 0}$.

If $f_0<_pf_1$ are Baire class $1$ functions then clearly $H(f_0) <_p H(f_1)$
hence by Proposition \ref{p:embpos} we have that if
$\delta=\delta(\Phi(H(f_0)),\Phi(H(f_1)))$, then $\Phi
(H(f_0))(\delta)
<_p\Phi(H(f_1))(\delta)$ if $\delta$ is even and $\Phi
(H(f_0))(\delta)>_p\Phi(H(f_1))(\delta)$ if $\delta$ is odd. Since $\Psi$ is
order preserving coordinate-wise, we obtain that $\Theta$ is an order preserving
embedding of $\mathcal{B}_1(X)$ into $([0,1]^{<\omega_1}_{\searrow 0},<_{altlex})$, which
finishes the proof of the theorem.
\end{proof}

\subsection{\texorpdfstring{$([0,1]^{<\omega_1}_{\searrow 0},<_{altlex}) \hookrightarrow\mc{B}_1(X)$}{([0,1]omega1->0,<altlex)->B1(X)}}
\begin{theorem}
\label{t:repr}
The linearly ordered set $([0,1]^{<\omega_1}_{\searrow 0},<_{altlex})$ can be represented by
$\mathbf{\Delta}^0_2$ subsets of $\mathcal{K}([0,1]^2)$ ordered by inclusion.
\end{theorem}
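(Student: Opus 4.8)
\textbf{Proof proposal for Theorem \ref{t:repr}.}

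The plan is to build, for each $\bar{x}\in\uni$, a compact set $K_{\bar x}\subseteq[0,1]^2$ that is ambiguous as a subset of $\mathcal{K}([0,1]^2)$ and to arrange the assignment $\bar{x}\mapsto K_{\bar x}$ so that it is strictly monotone from $<_{altlex}$ to $\subsetneqq$. The natural idea is to encode the transfinite sequence $\bar{x}=(x_\alpha)_{\alpha\le\xi}$ into a decreasing sequence of ``brushes''. First I would reparametrize: since $\bar x$ is strictly decreasing with last entry $0$, I think of it as carving the interval $[0,1]$ into the nested system of segments $[0,x_\alpha]$. For each coordinate $\alpha$ I would reserve a disjoint ``slot'' $I_\alpha\subseteq[0,1]$ (the slots chosen so that $\alpha<\beta\Rightarrow I_\beta<I_\alpha$ in $[0,1]$, i.e. later coordinates sit closer to $0$), and inside the rectangle $I_\alpha\times[0,1]$ I would place a figure whose ``size'' in the second coordinate is a strictly increasing continuous function of $x_\alpha$ when $\alpha$ is even, and strictly decreasing when $\alpha$ is odd. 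Then $K_{\bar x}$ is the closure of the union of these pieces (the closure is needed precisely because $\bar x$ may have limit length, and the accumulation of the slots at $0$ forces us to glue on a boundary segment; one must check this closure is still ambiguous).

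The key steps, in order, are: (i) fix the disjoint family $\{I_\alpha:\alpha<\omega_1\}$ of subintervals of $[0,1]$, together with continuous ``gadget'' maps $G^{\mathrm{ev}},G^{\mathrm{odd}}:[0,1]\to\mathcal{K}([0,1])$ that are respectively $\subseteq$-increasing and $\subseteq$-decreasing and that collapse to a single point at the relevant endpoint (so that the slot contributes nothing when $x_\alpha$ reaches that endpoint); (ii) define $K_{\bar x}=\overline{\bigcup_{\alpha\le\xi}\,I_\alpha\times G^{(-1)^\alpha}(x_\alpha)}$ and verify it is compact; (iii) prove the monotonicity: given $\bar x<_{altlex}\bar x'$ with first difference at $\delta$, show $K_{\bar x}\subseteq K_{\bar x'}$ — on slots $I_\alpha$ with $\alpha<\delta$ the two sets agree, on $I_\delta$ the parity rule together with the monotonicity of the gadget gives strict containment of that fiber, and on slots $I_\alpha$ with $\alpha>\delta$ one has to argue that the containment is not destroyed, which is where the alternating definition is doing real work and is the crux of the construction; (iv) prove $K_{\bar x}\in\mathbf{\Delta}^0_2(\mathcal{K}([0,1]^2))$.

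Step (iii) — getting containment to survive past the coordinate of first difference — is where I expect the main obstacle. The danger is that beyond $\delta$ the sequences $\bar x$ and $\bar x'$ are essentially unrelated, so the pieces $I_\alpha\times G^{(-1)^\alpha}(x_\alpha)$ for $\alpha>\delta$ on the two sides need not be comparable. The resolution I would pursue is to make the gadget on slot $I_\alpha$ \emph{shrink} in the first coordinate as well, at a rate governed by all the earlier entries, so that for $\alpha>\delta$ the entire ``tail part'' $\overline{\bigcup_{\alpha>\delta}(\dots)}$ of $K_{\bar x}$ is squeezed into an arbitrarily thin strip near a point that lies \emph{inside} the $\delta$-slot fiber of $K_{\bar x'}$ (using that $x_\delta<x'_\delta$, say, for $\delta$ even). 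In other words, the gain of strict containment at coordinate $\delta$ must be made large enough — quantitatively, by a modulus depending continuously on $x_\delta,x'_\delta$ — to absorb the entire uncontrolled tail. This is a delicate but standard ``nested-figures with geometrically decreasing scale'' argument; once the scales are chosen correctly, (ii) and (iv) are routine (compactness of a decreasing-scale union is automatic, and membership in $\mathbf{\Delta}^0_2$ follows because the relations ``$K\subseteq U$'' and ``$K\cap F\ne\emptyset$'' defining a neighbourhood basis of $K_{\bar x}$ in the Hausdorff metric can be written using countably many of the fixed slots and rational thresholds).

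\textbf{Remark on an alternative route.} A cleaner variant, which I would try first, is to not take closures at all: choose the slots $I_\alpha$ so that for each fixed countable ordinal $\xi$ only finitely many slots $I_\alpha$ with $\alpha\le\xi$ meet any given subinterval bounded away from $0$, and let the gadgets degenerate fast enough that $\bigcup_{\alpha\le\xi}I_\alpha\times G^{(-1)^\alpha}(x_\alpha)$ together with $\{0\}\times\{0\}$ is already closed. This sidesteps having to check that the closure operation preserves ambiguousness, at the cost of a more careful bookkeeping of the slot placement; whichever of the two is technically lighter is the one I would write up.
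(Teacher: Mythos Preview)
Your proposal has a fatal gap. You aim to assign to each $\bar x$ a \emph{compact} set $K_{\bar x}\subseteq[0,1]^2$ with $\bar x<_{altlex}\bar y\Rightarrow K_{\bar x}\subsetneqq K_{\bar y}$, but no such assignment can exist: the poset of closed subsets of any second-countable space under strict inclusion embeds into $\mathbb{R}$ (fix a countable base $\{B_n\}$ and send $F\mapsto\sum_{B_n\cap F\ne\emptyset}2^{-n}$; compare Lemma~\ref{l:embed}), whereas $\uni$ does not embed in $\mathbb{R}$ --- for instance the open intervals $\bigl((1,r,0),(1,r,r/2,0)\bigr)$ for $r\in(0,1)$ are nonempty and pairwise disjoint, so $\uni$ is not even ccc. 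Hence the obstacle you flag in step~(iii) is not a technicality to be absorbed by shrinking scales; it is a genuine obstruction, and your absorption argument also contradicts your own setup (with pairwise disjoint slots, the tail of $K_{\bar x}$ lives in $\bigcup_{\alpha>\delta}I_\alpha\times[0,1]$, which is disjoint from the $\delta$-fiber $I_\delta\times[0,1]$ of $K_{\bar x'}$). There are two further problems: you ask for an uncountable disjoint family $\{I_\alpha:\alpha<\omega_1\}$ of subintervals of $[0,1]$, which does not exist; and there is a type mismatch with the statement --- the theorem asks for $\mathbf\Delta^0_2$ subsets of the \emph{hyperspace} $\mathcal K([0,1]^2)$, while your $K_{\bar x}\subseteq[0,1]^2$ is a single \emph{point} of $\mathcal K([0,1]^2)$, not a subset of it.

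The paper's route is quite different and depends on producing sets that are genuinely $\mathbf\Delta^0_2$ and not closed. One first defines an auxiliary map $\Psi:\uni\to\mathcal K([0,1]^2)$ (essentially the closure of the range of $\bar x$ on the horizontal axis, with a short vertical segment attached at each limit coordinate that is actually attained), but $\Psi$ itself is not the embedding. For each $\bar x$ and each $\alpha$ one sets $\mathcal H^{\bar x}_\alpha=\{\Psi(\bar z):\bar z|_\alpha=\bar x|_\alpha,\ z_\alpha\le x_\alpha\}\subseteq\mathcal K([0,1]^2)$; the closures $\overline{\mathcal H^{\bar x}_\alpha}$ form a decreasing transfinite sequence of closed subsets of the hyperspace, and the assigned set is the alternating difference
\[
\mathcal A^{\bar x}=\bigcup_{\alpha\text{ even}}\bigl(\overline{\mathcal H^{\bar x}_\alpha}\setminus\overline{\mathcal H^{\bar x}_{\alpha+1}}\bigr),
\]
which is $\mathbf\Delta^0_2$ by the Hausdorff--Kuratowski theorem. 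The alternation in this difference is exactly what encodes $<_{altlex}$, and the verification that $\bar x<_{altlex}\bar y\Rightarrow\mathcal A^{\bar x}\subsetneqq\mathcal A^{\bar y}$ is a case analysis on the parity of $\delta(\bar x,\bar y)$.
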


\begin{proof} First we define a map $\Psi:[0,1]^{<\omega_1}_{\searrow 0} \to
\mathcal{K}([0,1]^2)$, basically assigning to each sequence its closure (as a subset
of the unit interval). However, such a map cannot distinguish between continuous
sequences and sequences omitting a limit point. To remedy this we place a line
segment on each limit point contained in the sequence.

Let $\bar{x} \in [0,1]^{<\omega_1}_{\searrow 0}$, with $\bar{x}=(x_\alpha)_{\alpha \leq
\xi}$. Now let \[\Psi(\bar{x})=\overline{\{(x_\alpha,0):\alpha \leq \xi\}}
\cup\]\[ \bigcup\{\{x_\alpha\} \times [0,x_{\alpha}-x_{\alpha+1}]: \text{ if }
0<\alpha<\xi \text{ and }x_\alpha=\inf\{x_\beta:\beta<\alpha\}\}.\]
\begin{lemma}
$\Psi(\bar{x})$ is a compact set for every $\f{x} \in [0,1]^{<\omega_1}_{\searrow 0}.$
\end{lemma}
\begin{proof}
Clearly, it is enough to show that if $(p_n,q_n) \to (p,q)$ is a convergent
sequence such that
for every $n$ we have
\begin{equation}
\label{e:pq}(p_n,q_n) \in\end{equation}
\[
 \bigcup\{\{x_\alpha\} \times [0,x_{\alpha}-x_{\alpha+1}]: \text{ if }
0<\alpha<\xi \text{ and }x_\alpha=\inf\{x_\beta:\beta<\alpha\}\}\]

then $(p,q) \in \Psi(\bar{x})$. 

Obviously, $p_n=x_{\alpha_n}$ for some ordinals $\alpha_n$. First, if the
sequence $x_{\alpha_n}$ is eventually constant, then there exists an $\alpha$ such
that $p=x_\alpha$ and except for finitely many $n$'s by (\ref{e:pq}) we have
$q_n \in [0,x_\alpha-x_{\alpha+1}]$.
So $(p,q) \in \{x_\alpha\} \times [0,x_\alpha-x_{\alpha+1}] \subset
\Psi(\bar{x})$.

Now if the sequence $(x_{\alpha_n})_{n \in \omega}$ is not eventually constant,
since the  sequence $(x_\alpha)_{\alpha \leq \xi}$ is strictly decreasing and
well-ordered then (passing to a subsequence of $(x_{\alpha_n})_{n \in \omega}$
if necessary) we can suppose that
 $(x_{\alpha_n})_{n \in \omega}$ is a strictly decreasing sequence.

Using the fact that $(x_{\alpha_n})_{n \in \omega}$ is a strictly  decreasing subset of $(x_{\alpha})_{\alpha \leq \xi}$ we obtain that
$x_{\alpha_n}-x_{\alpha_{n}+1} \leq x_{\alpha_n}-p$. Hence from (\ref{e:pq}) we
obtain
\[0 \leq q_n \leq x_{\alpha_n}-x_{\alpha_{n}+1} \leq x_{\alpha_n}-p \to 0\]
so $q_n=0$. Therefore, 
\[(p,q)=(\lim_{n \to \infty }{x_{\alpha_n}},0) \in \overline
{\{(x_\alpha,0):\alpha \leq \xi\}} \subset \Psi(\bar{x}).\]
\end{proof}
Now we define a decreasing sequence of subsets of $\mathcal{K}([0,1]^2)$ for each
$\bar{x}=(x_\alpha)_{\alpha \leq \xi}$ and $\alpha\leq \xi$ as follows:
\begin{equation}
 \mathcal{H}^{\bar{x}}_\alpha=\{\Psi(\bar{z}):\bar{z}|_\alpha=\bar{x}|_\alpha,
z_\alpha \leq x_\alpha\}.
 \label{hdef}
\end{equation}
We will use the following notations for an even ordinal $\alpha \leq \xi$:
\begin{equation}
  \label{kdef}
\mathcal{K}^{\bar{x}}_\alpha=\overline{\mathcal{H}^{\bar{x}}_\alpha}(=\overline{
\{\Psi(\bar{z}):\bar{z}|_\alpha=\bar{x}|_\alpha, z_\alpha \leq x_\alpha\}}),
\end{equation}
 and if $\alpha+1 \leq \xi$ then
 \begin{equation}
  \label{ldef}
\mathcal{L}^{\bar{x}}_{\alpha}=\overline{\mathcal{H}^{\bar{x}}_{\alpha+1}}
(=\overline{\{\Psi(\bar{z}):\bar{z}|_{\alpha+1}=\bar{x}|_{\alpha+1},
z_{\alpha+1} \leq x_{\alpha+1}\}}).
 \end{equation}
Finally, if $\alpha=\xi$ then let $\mathcal{L}^{\bar{x}}_{\alpha}=\emptyset$. So
$\mathcal{K}^{\bar{x}}_\alpha$ and $\mathcal{L}^{\bar{x}}_{\alpha}$ is defined
for every even $\alpha \leq \xi$.

Notice that the sequence $(\overline{\mathcal{H}^{\bar{x}}_\alpha})_{\alpha \leq
\xi}$ is a decreasing sequence of closed sets.

To each $\bar{x}=(x_\alpha)_{\alpha \leq \xi}$ let us assign
\[\mc{A}^{\bar{x}}=\bigcup_{\alpha \leq \xi, \alpha \text{ even}}
(\mathcal{K}^{\bar{x}}_\alpha \setminus \mathcal{L}^{\bar{x}}_\alpha).\]
By \cite[22.27]{kech}, since $\mc{A}^{\bar{x}}$ is a transfinite difference of a
decreasing sequence of closed sets, we have $\mc{A}^{\bar{x}} \in
\mathbf{\Delta}^0_2(\mathcal{K}([0,1]^2))$. 

To overcome some technical difficulties we prove the following lemma.
\begin{lemma}
\label{l:technicallemma}
 Let $\bar{z} \in [0,1]^{<\omega_1}_{\searrow 0}$ and $\beta$ be an ordinal such that $\beta+1
\leq l(\bar{z})$.  
 \begin{enumerate}[(1)]
  \item \label{c:revii}If $K \in \overline{\mathcal{H}^{\bar{z}}_{\beta+1}}$, $\beta$ is a
limit ordinal, $\inf\{z_\gamma:\gamma<\beta\}=z_\beta$ and $l(\bar{z})>\beta+1$
then 
  $(z_\beta,z_{\beta}-z_{\beta+1}) \in K$. 
  \item \label{c:reviii} If $K \in \overline{\mathcal{H}^{\bar{z}}_\beta}$ and $\beta$ is a
successor then $(z_{\beta-1},0) \in K$.
  \item \label{c:reviiii}If $K \in \overline{\mathcal{H}^{\bar{z}}_\beta}$, $\beta$ is a limit
ordinal and $\inf\{z_\gamma:\gamma<\beta\}>z_\beta$ or $\beta$ is a successor then
\begin{enumerate}
	
	\item \label{c:triv1} $K \cap ((z_\beta,\inf\{z_{\gamma}:\gamma<\beta\}) \times [0,1])=\emptyset$
	\item \label{c:triv2} $K \cap (\{\inf\{z_{\gamma}:\gamma<\beta\}\}  \times (0,1])=\emptyset$
\end{enumerate}
(notice that if $\beta$ is a successor then
$\inf\{z_{\gamma}:\gamma<\beta\}=z_{\beta-1}$).

 \end{enumerate}

\end{lemma}
\begin{proof}
	For $(\ref{c:revii})$ and $(\ref{c:reviii})$ just notice that by equation (\ref{hdef}) whenever
$\Psi(\bar{w}) \in \mathcal{H}^{\bar{z}}_\beta$
($\mathcal{H}^{\bar{z}}_{\beta+1}$, respectively) then $\Psi(\bar{w})$ contains
the point $(z_{\beta-1},0)$ (the point $(z_\beta,z_{\beta}-z_{\beta+1})$).
Consequently, every compact set which is in the closure of
$\mathcal{H}^{\bar{z}}_\beta$ (or $\mathcal{H}^{\bar{z}}_{\beta+1}$) contains
the point $(z_{\beta-1},0)$ (the point $(z_\beta,z_{\beta}-z_{\beta+1})$).

In order to see $(\ref{c:reviiii})$ first observe the following: if $U \subset [0,1]^2$ is a relatively open set and $L \in \mathcal{K}([0,1]^2)$ then the set $S=\{K\in \mathcal{K}([0,1]^2): K \cap U=L \cap U\}$ is closed: if $K_n \to K$ with $K_n \in S$ then $K \cap U \supset L \cap U$ is obvious. Now if there was an $x \in (K \cap U) \setminus L$ then there would be a (relatively) open set $V \subset U\setminus L$ with $x \in V$. But then by $x \in K$ we would have that $K_n \cap V \not = \emptyset$ for a large enough $n$, contradicting $K_n \cap V=L \cap V=\emptyset$, so $S$ is indeed closed. 

 Now, by the definition of
$\mathcal{H}^{\bar{z}}_\beta$ for every $\bar{w}$ such that $\Psi(\f{w}) \in
\mathcal{H}^{\bar{z}}_\beta$ we have
\[
\label{e:revi}
\psi(\f{w}) \cap ((z_\beta,1] \times [0,1])=\psi(\f{z}) \cap ((z_\beta,1] \times [0,1]).
\]

Thus, since the set $((z_\beta,1] \times [0,1])$ is relatively open in $[0,1]^2$ we have \[\overline{\mathcal{H}^{\bar{z}}_\beta} \subset \{K \in \mathcal{K}([0,1]^2):K \cap ((z_\beta,1] \times [0,1])=\psi(\f{z}) \cap ((z_\beta,1] \times [0,1]) \},\]
as the latter set is closed and  contains $\mathcal{H}^{\bar{z}}_\beta$.
In particular, for every $K \in \overline{\mathcal{H}^{\bar{z}}_\beta}$ we get that 
 \[K \cap
((z_\beta,\inf\{z_{\gamma}:\gamma<\beta\}) \times [0,1])=\emptyset\]
and 
\[K \cap (\{\inf\{z_{\gamma}:\gamma<\beta\}\}  \times (0,1])= \emptyset \]
hold, which proves the
lemma. \end{proof}

In order to show that $\bar{x} \mapsto \mathcal{A}^{\bar{x}}$ is an embedding it is
enough to prove the following claim.

\textbf{Main Claim.} If $\bar{x}<_{altlex} \bar{y}$ then $\mc{A}^{\bar{x}}
\subsetneqq \mc{A}^{\bar{y}}$.

To verify this we have to distinguish two cases.

{\bf Case 1}. $\delta=\delta(\f{x},\f{y})$ is even.

Then $x_\delta<y_\delta$ and
$\delta+1<l(\bar{y})$. We will show the following lemma.

\begin{lemma}
\label{regielso}
$\mathcal{K}^{\bar{x}}_{\delta} \subsetneqq \mathcal{K}^{\bar{y}}_\delta
\setminus  \mathcal{L}^{\bar{y}}_\delta.$
\end{lemma}
\begin{proof}[Proof of Lemma \ref{regielso}.]
From $x_\delta<y_\delta$ we have
\[\{\Psi(\bar{z}):\bar{z}|_\delta=\bar{x}|_\delta, z_\delta \leq x_\delta\}
\subset \{\Psi(\bar{z}):\bar{z}|_\delta=\bar{x}|_\delta, z_\delta \leq
y_\delta\}\] so $\mathcal{K}^{\bar{x}}_{\delta} \subset
\mathcal{K}^{\bar{y}}_{\delta}$.

First, we prove that 
\begin{equation}
\label{e:kcontained}
\mathcal{K}^{\bar{x}}_{\delta} \subset \mathcal{K}^{\bar{y}}_\delta \setminus 
\mathcal{L}^{\bar{y}}_\delta.
\end{equation}
 Here we have to separate two subcases. 

{\sc Subcase 1.} $\delta$ is a limit ordinal and
$y_\delta=\inf\{y_{\alpha}:\alpha<\delta\}$. 

On the one hand, using $(\ref{c:revii})$ of Lemma \ref{l:technicallemma} (with
$\bar{z}=\bar{y}$ and $\beta=\delta$) we obtain that for every $K \in
\mathcal{L}^{\bar{y}}_\delta(=\overline{\mathcal{H}^{\bar{y}}_{\delta+1}})$ we
have $(y_\delta,y_\delta-y_{\delta+1}) \in K$.

On the other hand, from (\ref{c:triv2}) of Lemma \ref{l:technicallemma} (with
$\bar{z}=\bar{x}$ and $\beta=\delta$) we have that for every $K \in
\mathcal{K}^{\bar{x}}_\delta(=\overline{\mathcal{H}^{\bar{x}}_\delta})$ we have
$K \cap (\{\inf\{x_\alpha:\alpha<\delta\}\} \times (0,1])= \emptyset$. In
particular, as $y_\delta=\inf\{y_\alpha:\alpha<\delta\}=\inf\{x_\alpha:\alpha<\delta\}$, we have
 $(y_\delta,y_\delta-y_{\delta+1}) \not \in K$. So we obtain
$\mathcal{K}^{\bar{x}}_\delta \cap \mathcal{L}^{\bar{y}}_\delta=\emptyset$,
hence by $\mathcal{K}^{\bar{x}}_\delta \subset \mathcal{K}^{\bar{y}}_\delta$ we
have $\mathcal{K}^{\bar{x}}_\delta \subset \mathcal{K}^{\bar{y}}_\delta
\setminus \mathcal{L}^{\bar{y}}_\delta$.

{\sc Subcase 2.} $\delta$ is a limit and
$y_\delta<\inf\{y_{\delta'}:\delta'<\delta\}$ or $\delta$ is a successor.

Using $(\ref{c:reviii})$ of Lemma \ref{l:technicallemma} (with $\bar{z}=\bar{y}$ and
$\beta=\delta+1$) we obtain that every $K \in
\mathcal{L}^{\f{y}}_\delta(=\overline{\mathcal{H}^{\bar{y}}_{\delta+1}})$
contains the point $(y_\delta,0)$. From (\ref{c:triv1}) of Lemma \ref{l:technicallemma}
(with $\bar{z}=\bar{x}$, $\beta=\delta$) we have that for every $K \in 
\mathcal{K}^{\bar{x}}_{\delta}(=\overline{\mathcal{H}^{\bar{x}}_{\delta}})$ the
set $K \cap ((x_\delta,\inf\{x_\alpha:\alpha<\delta\}) \times [0,1])$ is empty. But
$y_\delta \in (x_\delta,\inf\{x_\alpha:\alpha<\delta\})$ so
$\mathcal{K}^{\bar{x}}_\delta \cap \mathcal{L}^{\bar{y}}_\delta=\emptyset$. This
finishes the proof of equation (\ref{e:kcontained}).

Second, in order to prove $\mathcal{K}^{\bar{x}}_{\delta} \not =
\mathcal{K}^{\bar{y}}_\delta \setminus  \mathcal{L}^{\bar{y}}_\delta$ let
$\bar{w}$ be such that $\bar{w}|_\delta=\bar{x}|_\delta$,
$x_\delta,y_{\delta+1}<w_\delta<y_\delta$ and $w_{\delta+1}=0$. Clearly,
$\Psi(\bar{w}) \in \mathcal{K}^{\bar{y}}_\delta$.

By (\ref{c:triv1}) of Lemma \ref{l:technicallemma} (used for $\bar{z}=\bar{x}$ and
$\beta=\delta$) we have that $\Psi(\bar{w}) \in
\mathcal{K}^{\bar{x}}_\delta(=\overline{\mathcal{H}^{\bar{x}}_\delta})$ would
imply $\Psi(\bar{w}) \cap ((x_\delta,\inf\{x_\alpha:\alpha<\delta\}) \times [0,1])=
\emptyset$, but $(w_\delta,0) \in (x_\delta,y_\delta) \times [0,1]$ and
$\inf\{x_\alpha:\alpha<\delta\}=\inf\{y_\alpha:\alpha<\delta\} \geq y_\delta$
which is a contradiction. Hence $\Psi(\bar{w}) \not \in
\mathcal{K}^{\bar{x}}_\delta$. 

Now we prove $\Psi(\bar{w}) \not \in \mathcal{L}^{\bar{y}}_{\delta}$. Suppose
the contrary, then using (\ref{c:triv1}) of Lemma \ref{l:technicallemma} (with
$\bar{z}=\bar{y}$ and $\beta=\delta+1$) one can obtain that for every $K \in
\mathcal{L}^{\bar{y}}_{\delta}(=\overline{\mathcal{H}^{\bar{y}}_{\delta+1}})$
the set $K \cap ((y_{\delta+1},y_\delta) \times [0,1])$ is empty. But clearly
$(w_\delta,0)  \in \Psi(\bar{w}) \cap ((y_{\delta+1},y_{\delta}) \times [0,1])$, a
contradiction. So $\Psi(\bar{w}) \not \in \mathcal{L}^{\bar{y}}_{\delta}$.

Thus, it follows that $\Psi(\bar{w}) \in (\mathcal{K}^{\bar{y}}_\delta \setminus
 \mathcal{L}^{\bar{y}}_\delta) \setminus \mathcal{K}^{\bar{x}}_{\delta}$. From
this and from equation (\ref{e:kcontained}) we can conclude Lemma
\ref{regielso}.
\end{proof}

Now we prove the Main Claim in Case 1. If $\delta'$ is even and
$\delta'<\delta$, the definitions (\ref{kdef}) and (\ref{ldef}) of
$\mathcal{K}^{\bar{y}}_{\delta'}$ and $\mathcal{L}^{\bar{y}}_{\delta'}$ depend
only on $(x_{\alpha})_{\alpha \leq \delta'+1}$ so 
\begin{equation}
\label{e:kequals}
\mathcal{K}^{\bar{x}}_{\delta'}=\mathcal{K}^{\bar{y}}_{\delta'}
\end{equation}
and
\begin{equation}
\label{e:lequals}
\mathcal{L}^{\bar{x}}_{\delta'}=\mathcal{L}^{\bar{y}}_{\delta'}.
\end{equation}
Now, from Lemma \ref{regielso} we have $\mc{A}^{\bar{x}} \subset
\mc{A}^{\bar{y}}$, since for every $K \in \mc{A}^{\bar{x}}$ we have either $K
\in \mathcal{K}^{\bar{x}}_{\delta'} \setminus
\mc{L}^{\f{x}}_{\delta'}=\mathcal{K}^{\bar{y}}_{\delta'}\setminus
\mc{L}^{\f{y}}_{\delta'}$ for some $\delta'<\delta$ or $K \in
\mathcal{K}^{\bar{x}}_{\delta}$. 

Moreover, we claim that using Lemma \ref{regielso} one can prove that
$\mc{A}^{\bar{x}} \subsetneqq \mc{A}^{\bar{y}}$. From the definition of
$\mc{A}^{\f{x}}$, from the fact that the sequence
$(\mathcal{H}^{\bar{x}}_{\alpha})_{\alpha \leq
\xi}=(\mathcal{K}^{\bar{x}}_0,\mathcal{L}^{\bar{x}}_0,\mathcal{K}^{\bar{x}}_1,
\mathcal{L}^{\bar{x}}_1,\dots)$ is decreasing and from equations
(\ref{e:kequals}) and (\ref{e:lequals}) follows that
\[(\mathcal{K}^{\bar{x}}_\delta)^c \cap
\mathcal{A}^{\bar{x}}=\bigcup_{\delta'<\delta, \text{ } \delta' \text{ even}}
\mathcal{K}^{\bar{x}}_{\delta'} \setminus
\mathcal{L}^{\bar{x}}_{\delta'}=\bigcup_{\delta'<\delta, \text{ } \delta' \text{
even}}
\mathcal{K}^{\bar{y}}_{\delta'} \setminus
\mathcal{L}^{\bar{y}}_{\delta'}=(\mathcal{K}^{\bar{y}}_\delta)^c \cap
\mathcal{A}^{\bar{y}}\]
So $\mathcal{A}^{\bar{x}} \subset (\mathcal{K}^{\bar{y}}_\delta)^c  \cup
\mathcal{K}^{\bar{x}}_\delta$. Hence, if $K \in (\mathcal{K}^{\bar{y}}_\delta
\setminus \mathcal{L}^{\bar{y}}_\delta) \setminus \mathcal{K}^{\bar{x}}_\delta$
then 
\[K \in \mathcal{K}^{\bar{y}}_\delta \setminus \mathcal{L}^{\bar{y}}_\delta
\subset \mathcal{A}^{\bar{y}}\]
and 
\[K \not \in  (\mathcal{K}^{\bar{y}}_\delta)^c \cup \mathcal{K}^{\bar{x}}_\delta
\supset \mathcal{A}^{\bar{x}}\]
so indeed, we obtain that the containment is strict, hence we are done with Case
1.

{\bf Case 2.} $\delta=\delta(\f{x},\f{y})$ is odd. 

Then $x_\delta>y_\delta$ and
$\delta+1 < l(\bar{x})$.\\
Notice that as the length of $\bar{x}$ is larger than $\delta+1$, the sets
$\mathcal{K}^{\bar{x}}_{\delta+1}$ and $\mathcal{L}^{\bar{x}}_{\delta+1}$ are
defined.

Now for every even $\delta' \leq \delta-1$ the definition of
$\mathcal{K}^{\bar{x}}_{\delta'}$  and $\mathcal{K}^{\bar{y}}_{\delta'}$ depend
only on $(x_{\alpha})_{\alpha \leq \delta'}=(y_{\alpha})_{\alpha \leq
\delta'}$. Thus for every even $\delta' \leq \delta-1$
\begin{equation}
\label{e:kequals2}
\mathcal{K}^{\bar{x}}_{\delta'}=\mathcal{K}^{\bar{y}}_{\delta'}
\end{equation} and also for every even $\delta' < \delta-1$
\begin{equation}
\label{e:lequals2}
\mathcal{L}^{\bar{x}}_{\delta'}=\mathcal{L}^{\bar{y}}_{\delta'}.
\end{equation}

We will show the following:
\begin{lemma}
\label{l:elsomasodik}
\begin{enumerate}[(1)]
\item \label{c:elsomasodiki} $\mc{K}^{\f{x}}_{\delta-1} \setminus \mathcal{L}^{\bar{x}}_{\delta-1}
\subset \mc{K}^{\f{y}}_{\delta-1} \setminus \mathcal{L}^{\bar{y}}_{\delta-1}$
\item \label{c:elsomasodikii}$ \mathcal{K}^{\f{x}}_{\delta+1} \subset 
\mathcal{K}^{\f{y}}_{\delta-1} \setminus \mathcal{L}^{\f{y}}_{\delta-1}.$
\end{enumerate}
\end{lemma}
\begin{proof}[Proof of Lemma \ref{l:elsomasodik}.]

It is easy to prove (\ref{c:elsomasodiki}): from equation (\ref{e:kequals2}) we get
$\mathcal{K}^{\bar{x}}_{\delta-1}=\mathcal{K}^{\bar{y}}_{\delta-1}$. Moreover,
$\mathcal{L}^{\bar{x}}_{\delta-1} \supset \mathcal{L}^{\bar{y}}_{\delta-1}$,
since 
\[\mathcal{L}^{\bar{x}}_{\delta-1}=\overline{\{\Psi(\bar{z}):\bar{z}|_{\delta}
=\bar{x}|_{\delta}, z_{\delta} \leq x_{\delta}\}} \supset
\overline{\{\Psi(\bar{z}):\bar{z}|_{\delta}=\bar{y}|_{\delta}, z_{\delta}
\leq y_{\delta}\}}=\mathcal{L}^{\bar{y}}_{\delta-1}\]
holds by $x_\delta>y_\delta$. 

Now we show (\ref{c:elsomasodikii}). First,
$\mathcal{K}^{\bar{x}}_{\delta+1} \subset
\mathcal{K}^{\bar{x}}_{\delta-1}=\mathcal{K}^{\f{y}}_{\delta-1}$, using that the
sequence $(\mathcal{K}^{\f{x}}_{\alpha})_{\alpha \leq \delta+1}$ is decreasing. 

So it is suffices to show that $\mathcal{K}^{\bar{x}}_{\delta+1} \cap
\mathcal{L}^{\f{y}}_{\delta-1}= \emptyset$.  Using (\ref{c:triv1}) of Lemma
\ref{l:technicallemma} (with $\bar{z}=\bar{y}$ and $\beta=\delta$) we obtain
that for every $K \in 
\mathcal{L}^{\bar{y}}_{\delta-1}(=\overline{\mathcal{H}^{\bar{y}}_{\delta}})$,
we have $K \cap ((y_{\delta},y_{\delta-1}) \times [0,1])=\emptyset$. 

However, by (\ref{c:reviii}) of Lemma \ref{l:technicallemma} (used with $\bar{z}=\bar{x}$ and
$\beta=\delta+1$) if $K \in \mathcal{K}^{\bar{x}}_{\delta+1}(=\overline{\mathcal{H}^{\bar{x}}_{\delta+1}})$ then
$(x_{\delta},0) \in K$. Therefore, $x_\delta \in (y_\delta,y_{\delta-1})$
implies that the intersection  $\mathcal{K}^{\bar{x}}_{\delta+1} \cap
\mathcal{L}^{\f{y}}_{\delta-1}$ must be empty. So we are done with the lemma.
\end{proof}

Now we prove the Main Claim in Case 2. By definition of $\mc{A}^{\bar{x}}$ and by
the fact that the sequence $(\mathcal{H}^{\bar{x}}_{\alpha})_{\alpha \leq
\xi}=(\mathcal{K}^{\bar{x}}_0,\mathcal{L}^{\bar{x}}_0,\mathcal{K}^{\bar{x}}_1,
\mathcal{L}^{\bar{x}}_1,\dots)$ is decreasing we have that if $K \in
\mc{A}^{\bar{x}}$ then either $K \in \mathcal{K}^{\bar{x}}_{\delta'} \setminus
\mc{L}^{\f{x}}_{\delta'}=\mathcal{K}^{\bar{y}}_{\delta'}\setminus
\mc{L}^{\f{y}}_{\delta'}$ for some even $\delta'<\delta-1$ or $K \in
\mathcal{K}^{\bar{x}}_{\delta-1} \setminus \mathcal{L}^{\bar{x}}_{\delta-1}$ or
$K \in \mathcal{K}^{\f{x}}_{\delta+1}$. Hence using equations (\ref{e:kequals2})
and (\ref{e:lequals2}) and Lemma \ref{l:elsomasodik} we obtain
\begin{equation}
\label{e:axcontained}
\mathcal{A}^{\f{x}} \subset \mathcal{A}^{\f{y}}.
\end{equation}
In order to show that $\mathcal{A}^{\f{x}} \not = \mathcal{A}^{\f{y}}$ it is
enough to find a $\bar{w}$ such that
\begin{equation}
 \Psi(\bar{w}) \in \mathcal{K}^{\bar{y}}_{\delta-1} \setminus
\mathcal{L}^{\bar{y}}_{\delta-1} \subset \mc{A}^{\f{y}} \label{ujelso}
\end{equation} and 
\begin{equation}
 \label{ujmasodik}
 \Psi(\bar{w}) \not \in \mathcal{K}^{\bar{x}}_{\delta+1} \cup
(\mathcal{L}^{\bar{x}}_{\delta-1})^c \supset \mc{A}^{\f{x}}.
\end{equation}

Take  $\bar{w}|_{\delta}=\bar{y}|_{\delta}$ and $w_{\delta}$ such that
$x_{\delta+1},y_{\delta}<w_{\delta}<x_{\delta}$ and $w_{\delta+1}=0$. 

Now, in order to see (\ref{ujelso}) clearly $\Psi(\bar{w}) \in
\mathcal{K}^{\bar{y}}_{\delta-1}$. On the other hand if $K \in
\mathcal{L}^{\bar{y}}_{\delta-1}(=\overline{\mathcal{H}^{\bar{y}}_{\delta}})$ by (\ref{c:triv1}) of Lemma \ref{l:technicallemma} (with
$\bar{z}=\bar{y}$ and $\beta=\delta$)  we have  $K \cap
((y_{\delta},y_{\delta-1}) \times [0,1])= \emptyset$. But
$y_\delta<w_\delta<x_\delta<x_{\delta-1}=y_{\delta-1}$, so $(w_\delta,0) \in
\Psi(\bar{w}) \cap ((y_{\delta},y_{\delta-1}) \times [0,1])$. Therefore,
$\Psi(\f{w})\not \in \mathcal{L}^{\bar{y}}_{\delta-1}$. 

In order to prove (\ref{ujmasodik}) it is obvious that $\Psi(\bar{w}) \in
\mathcal{L}^{\bar{x}}_{\delta-1}$. Now using again (\ref{c:triv1}) of Lemma
\ref{l:technicallemma} (with $\bar{z}=\bar{x}$ and $\beta=\delta+1$) we obtain
that whenever $K \in
\mathcal{K}^{\bar{x}}_{\delta+1}(=\overline{\mathcal{H}^{\bar{x}}_{\delta+1}})$
then $K \cap ((x_{\delta+1},x_\delta) \times [0,1])=\emptyset$. However, $w_{\delta}
\in (x_{\delta+1},x_\delta)$ hence $(w_\delta,0) \in \Psi(\bar{w}) \cap
((x_{\delta+1},x_\delta) \times [0,1])$, so $\Psi(\bar{w})\not \in
\mathcal{K}^{\bar{x}}_{\delta+1}$.

So we can conclude that $\mc{A}^{\bar{x}} \not = \mc{A}^{\f{y}}$. Thus, using
equation (\ref{e:axcontained}) we can finish the proof of the Main Claim in Case
2 and hence we obtain Theorem \ref{t:repr} as well. 
\end{proof}
\subsection{The main theorem}

\begin{theorem}
 \label{t:main}
 (Main Theorem) Let $X$ be an uncountable Polish space. Then the following are
equivalent for a linear ordering $(L,<)$:
 \begin{enumerate}[(1)]
  \item $(L,<) \hookrightarrow (\mc{B}_1(X),<_p)$
  \item $(L,<) \hookrightarrow ([0,1]^{<\omega_1}_{\searrow 0},<_{altlex})$
  \item $(L,<) \hookrightarrow (\mathbf{\Delta}^0_2(X),\subsetneqq)$
  
  \end{enumerate}
  In fact, $\un$, $(\mathbf{\Delta}^0_2(X),\subsetneqq)$ and
$(\mc{B}_1(X),<_p)$ are embeddable into each other.
\end{theorem}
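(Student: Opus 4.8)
The plan is to assemble the Main Theorem from the two substantial results already proved in the excerpt, namely Theorem~\ref{t:emb} (which gives $\mc{B}_1(X)\hookrightarrow\uni$) and Theorem~\ref{t:repr} (which gives an embedding of $\uni$ into $(\mathbf{\Delta}^0_2(\mathcal{K}([0,1]^2)),\subsetneqq)$). To close the cycle of embeddings $(1)\Rightarrow(2)\Rightarrow(3)\Rightarrow(1)$ the only genuinely new ingredient needed is a transfer of $\mathbf{\Delta}^0_2$-embeddings between Polish spaces, together with the trivial observation that $(\mathbf{\Delta}^0_2(X),\subsetneqq)$ embeds into $(\mc{B}_1(X),<_p)$ via $A\mapsto\chi_A$ (recorded already in the Preliminaries: $\chi_A\in\mc{B}_1(X)\iff A\in\mathbf{\Delta}^0_2(X)$ and $\chi_A<_p\chi_B\iff A\subsetneqq B$).

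First I would prove $(2)\Rightarrow(3)$: by Theorem~\ref{t:repr} we have $\uni\hookrightarrow(\mathbf{\Delta}^0_2(\mathcal{K}([0,1]^2)),\subsetneqq)$, so it suffices to see that for any two uncountable Polish spaces $Y,Z$ one has $(\mathbf{\Delta}^0_2(Y),\subsetneqq)\hookrightarrow(\mathbf{\Delta}^0_2(Z),\subsetneqq)$. The standard device is that every uncountable Polish space contains a closed (hence $\mathbf{\Delta}^0_2$) copy $C$ of the Cantor set, and the Cantor set is a continuous retract-like target: more precisely, there is a continuous surjection (indeed a retraction onto a copy of $2^\omega$, or one can simply use that $Z$ contains a homeomorphic copy of $2^\omega$ as a closed subset and that $\mathcal{K}([0,1]^2)$, being a perfect Polish space, likewise does). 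Concretely, fix a homeomorphic embedding $e\colon 2^\omega\to Z$ onto a closed set $C$; every Polish space is the continuous image of $2^\omega$ (or of $\om$), so fix a continuous surjection $\varphi\colon C\to\mathcal{K}([0,1]^2)$ via the embedding, and map $A\in\mathbf{\Delta}^0_2(\mathcal{K}([0,1]^2))$ to $\varphi^{-1}(A)\subseteq C\subseteq Z$. Since $\varphi$ is continuous, $\varphi^{-1}$ preserves the Borel hierarchy, so $\varphi^{-1}(A)\in\mathbf{\Delta}^0_2(C)=\mathbf{\Delta}^0_2(Z)\cap\mathcal{P}(C)\subseteq\mathbf{\Delta}^0_2(Z)$ ($C$ closed); and since $\varphi$ is \emph{onto}, $\varphi^{-1}$ is strictly monotone for $\subsetneqq$. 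Composing with Theorem~\ref{t:repr} yields $\uni\hookrightarrow(\mathbf{\Delta}^0_2(X),\subsetneqq)$, and a fortiori any $L\hookrightarrow\uni$ satisfies $L\hookrightarrow(\mathbf{\Delta}^0_2(X),\subsetneqq)$.

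Then $(3)\Rightarrow(1)$ is immediate from $A\mapsto\chi_A$ as above, and $(1)\Rightarrow(2)$ is exactly Theorem~\ref{t:emb} applied to the Polish space $X$ (note Theorem~\ref{t:emb} needs no uncountability hypothesis). For the final ``embeddable into each other'' sentence, I would spell out the three non-obvious embeddings as a chain: $\mc{B}_1(X)\hookrightarrow\uni$ (Theorem~\ref{t:emb}); $\uni\hookrightarrow(\mathbf{\Delta}^0_2(X),\subsetneqq)$ (Theorem~\ref{t:repr} followed by the transfer argument just described); and $(\mathbf{\Delta}^0_2(X),\subsetneqq)\hookrightarrow(\mc{B}_1(X),<_p)$ (characteristic functions). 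Since these three classes are linked in a cycle of embeddings, each embeds into each of the others by composition, which gives the equivalences $(1)\Leftrightarrow(2)\Leftrightarrow(3)$ for an arbitrary linear order $L$ as well.

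The one step that requires care — and which I regard as the main obstacle — is the transfer of $\mathbf{\Delta}^0_2$-representability from $\mathcal{K}([0,1]^2)$ to an arbitrary uncountable Polish space $X$: one must make sure the continuous surjection from a closed subset of $X$ onto $\mathcal{K}([0,1]^2)$ genuinely exists (every nonempty Polish space is a continuous image of $\om$, and $\om$ embeds as a closed subspace of any uncountable Polish space after passing to a suitable closed subset; alternatively use that an uncountable Polish space contains a closed copy of $2^\omega$ and $2^\omega$ continuously surjects onto $[0,1]^2$ hence onto $\mathcal{K}([0,1]^2)$ since the latter is a continuous image of $2^\omega$ too), and that pulling back along it sends $\mathbf{\Delta}^0_2$ sets to $\mathbf{\Delta}^0_2$ sets of the closed subset, which remain $\mathbf{\Delta}^0_2$ in $X$. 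All of this is standard descriptive set theory (see \cite{kech}), so the argument is short once the right surjection is named.
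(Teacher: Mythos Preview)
Your overall architecture is exactly the paper's: establish the cycle $\mc{B}_1(X)\hookrightarrow\uni\hookrightarrow(\mathbf{\Delta}^0_2(X),\subsetneqq)\hookrightarrow\mc{B}_1(X)$ using Theorem~\ref{t:emb}, Theorem~\ref{t:repr}, and $A\mapsto\chi_A$. The one place where you diverge from the paper is the transfer from $\mathbf{\Delta}^0_2(\mathcal{K}([0,1]^2))$ to $\mathbf{\Delta}^0_2(X)$. The paper quotes an external result (\cite[Theorem~1.2]{marci1}) to pass first to $\mathbf{\Delta}^0_2(2^\omega)$ and then pushes forward along a continuous injection $2^\omega\hookrightarrow X$. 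You instead pull back along a continuous surjection from a closed copy of $2^\omega$ in $X$ onto $\mathcal{K}([0,1]^2)$. Your route is more self-contained and perfectly valid for the case at hand, since $\mathcal{K}([0,1]^2)$ is compact metric and hence a continuous image of $2^\omega$; surjectivity gives strict monotonicity of $A\mapsto\varphi^{-1}(A)$, continuity preserves $\mathbf{\Delta}^0_2$, and closedness of the Cantor copy in $X$ lets you regard the preimage as $\mathbf{\Delta}^0_2$ in $X$.

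One caution: you assert in passing that ``every Polish space is the continuous image of $2^\omega$'' and that ``$\om$ embeds as a closed subspace of any uncountable Polish space''. Neither is true in general (the first requires compactness of the target; the second fails for compact $X$). You do not actually need either statement---your clean alternative, that $X$ contains a closed (indeed compact) copy of $2^\omega$ and that $2^\omega$ surjects continuously onto the compact space $\mathcal{K}([0,1]^2)$, is both correct and sufficient---so just delete the inaccurate parenthetical claims and keep the argument that works. Likewise, your stated general goal $(\mathbf{\Delta}^0_2(Y),\subsetneqq)\hookrightarrow(\mathbf{\Delta}^0_2(Z),\subsetneqq)$ for \emph{arbitrary} uncountable Polish $Y,Z$ is stronger than what your argument establishes or than what the theorem needs; restrict the claim to the specific $Y=\mathcal{K}([0,1]^2)$.
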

\begin{proof}

 $(\mc{B}_1(X),<_p) \hookrightarrow ([0,1]^{<\omega_1}_{\searrow 0},<_{altlex}):$  Theorem
\ref{t:emb}.
 
 $([0,1]^{<\omega_1}_{\searrow 0},<_{altlex}) \hookrightarrow
(\mathbf{\Delta}^0_2(X),\subsetneqq):$ we proved in Theorem \ref{t:repr} that
$([0,1]^{<\omega_1}_{\searrow 0},<_{altlex}) \hookrightarrow
(\mathbf{\Delta}^0_2(\mathcal{K}([0,1]^2)),\subsetneqq)$. Now, \cite[Theorem
1.2]{marci1} states that the class of linear orderings representable in
$\mathbf{\Delta}^0_2$ coincide for all uncountable $\sigma$-compact Polish
spaces. 
 Hence, if $C$ is the Cantor space, then $([0,1]^{<\omega_1}_{\searrow 0},<_{altlex})
\hookrightarrow (\mathbf{\Delta}^0_2(C),\subsetneqq)$. 
 If $X$ is an uncountable Polish space then there exists a continuous injection
$h:C \to X$. Now, since $h(C)$ is a closed set in $X$ we have that $A \mapsto
h(A)$ is an inclusion-preserving embedding 
$(\mathbf{\Delta}^0_2(C),\subsetneqq) \hookrightarrow
(\mathbf{\Delta}^0_2(X),\subsetneqq)$. Consequently,
$([0,1]^{<\omega_1}_{\searrow 0},<_{altlex}) \hookrightarrow
(\mathbf{\Delta}^0_2(X),\subsetneqq).$
 
 $(\mathbf{\Delta}^0_2(X),\subsetneqq) \hookrightarrow (\mc{B}_1(X),<_p):$ if
$A$ is a $\mathbf{\Delta}^0_2$ set then $\chi_A$ is a Baire class $1$ function
and $A \mapsto \chi_A$ is an order preserving
$(\mathbf{\Delta}^0_2(X),\subsetneqq) \hookrightarrow (\mc{B}_1(X),<_p)$ map.\end{proof}

We immediately obtain the answers to Questions 5.2 and 5.3 from \cite{marci2}. 

\begin{corollary}
\label{c:characteristic}
There exists an embedding $\mathcal{B}_1(X) \hookrightarrow
\mathbf{\Delta}^0_2(X)$, hence a linear ordering is representable by Baire class $1$ functions iff it is representable by Baire class $1$ \emph{characteristic} functions.
 \end{corollary}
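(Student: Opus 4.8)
The plan is to read this off directly from the Main Theorem. By Theorem \ref{t:main} — more precisely, the ``in fact'' clause asserting that $\un$, $(\mathbf{\Delta}^0_2(X),\subsetneqq)$ and $(\mc{B}_1(X),<_p)$ are mutually embeddable — there is an embedding $(\mc{B}_1(X),<_p) \hookrightarrow (\mathbf{\Delta}^0_2(X),\subsetneqq)$, which is the first assertion of the corollary. For the second assertion I would invoke the standard correspondence recorded in the Preliminaries: a characteristic function $\chi_A$ is of Baire class $1$ precisely when $A \in \mathbf{\Delta}^0_2(X)$, and $\chi_A <_p \chi_B$ if and only if $A \subsetneqq B$. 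Hence $A \mapsto \chi_A$ is an order isomorphism of $(\mathbf{\Delta}^0_2(X),\subsetneqq)$ onto the subposet of $(\mc{B}_1(X),<_p)$ consisting of the Baire class $1$ characteristic functions.

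Putting these together, if a linear ordering $L$ is representable in $\mc{B}_1(X)$, then composing a witnessing embedding $L \hookrightarrow (\mc{B}_1(X),<_p)$ first with the embedding $(\mc{B}_1(X),<_p) \hookrightarrow (\mathbf{\Delta}^0_2(X),\subsetneqq)$ supplied by Theorem \ref{t:main}, and then with the isomorphism $A \mapsto \chi_A$, produces an embedding of $L$ into the class of Baire class $1$ characteristic functions. The converse implication is trivial, since the Baire class $1$ characteristic functions form a subset of $\mc{B}_1(X)$, so any $L$ representable by such functions is a fortiori representable in $\mc{B}_1(X)$.

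There is essentially no obstacle here: all the substantive work has already been carried out in Theorems \ref{t:emb}, \ref{t:repr} and \ref{t:main}, and the present statement merely repackages their conclusion. The only point that needs a moment's care is to make sure the embedding being cited genuinely takes values in $\mathbf{\Delta}^0_2(X)$ and not just in $\mc{B}_1(X)$ — but that is exactly what the composite embedding in the proof of Theorem \ref{t:main} delivers, passing through $(\mathbf{\Delta}^0_2(\mathcal{K}([0,1]^2)),\subsetneqq)$, then the Cantor space, and finally transporting along a continuous injection into $X$ whose image is closed.
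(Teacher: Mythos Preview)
Your proposal is correct and matches the paper's approach exactly: the paper states the corollary immediately after Theorem~\ref{t:main} with no further proof, treating it as an immediate consequence of the mutual embeddability established there together with the correspondence $A \mapsto \chi_A$ recorded in the Preliminaries. You have simply spelled out that one-line derivation in full.
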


The equivalence of (1) and (2), implies that the embeddability of a linearly
ordered set into the set of Baire class $1$ functions does not depend on the
underlying Polish space (provided of course that the Polish space is
uncountable). This result answers Question 1.5 from \cite{marci1} affirmatively.

\begin{corollary}
\label{c:baire1same}
If $X$ and $Y$ are uncountable Polish spaces and $L \hookrightarrow \mathcal{B}_1(X)$ then $L \hookrightarrow \mathcal{B}_1(Y)$.
\end{corollary}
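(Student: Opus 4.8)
The plan is to derive this immediately from the Main Theorem (Theorem \ref{t:main}). The crucial point is that condition (2) of that theorem, namely $(L,<) \hookrightarrow \un$, makes no reference whatsoever to the Polish space; it is a purely combinatorial statement relating the abstract linear order $L$ to the fixed universal order $\un$. Hence (1) cannot genuinely depend on $X$ either, since it is equivalent to a space-independent condition.

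Concretely, I would argue as follows. Suppose $L \hookrightarrow \mathcal{B}_1(X)$. Applying the implication $(1) \Rightarrow (2)$ of Theorem \ref{t:main} (which is valid for \emph{every} uncountable Polish space, in particular for $X$) we obtain $L \hookrightarrow \un$. Now applying the implication $(2) \Rightarrow (1)$ of Theorem \ref{t:main} to the uncountable Polish space $Y$, we conclude $L \hookrightarrow \mathcal{B}_1(Y)$, which is exactly the claim. There is no real obstacle to overcome here: the entire substance has already been absorbed into the proof of the Main Theorem, where the embeddings $\mathcal{B}_1(X) \hookrightarrow \un$ (Theorem \ref{t:emb}) and $\un \hookrightarrow (\mathbf{\Delta}^0_2(X),\subsetneqq) \hookrightarrow (\mathcal{B}_1(X),<_p)$ (Theorem \ref{t:repr} together with the transfer of $\mathbf{\Delta}^0_2$-representability across uncountable Polish spaces) were established uniformly. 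One could equally well route the argument through condition (3), obtaining in addition that $\mathbf{\Delta}^0_2(X)$-representability is independent of the uncountable Polish space $X$, but for the stated corollary the single intermediate step through $\un$ suffices.
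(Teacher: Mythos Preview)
Your proof is correct and is exactly the argument the paper gives: the corollary is deduced immediately from the equivalence of (1) and (2) in the Main Theorem, the point being that condition (2) does not mention the space.
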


From now on we will simply use the notation $\mathcal{B}_1(X) = \mc{B}_1$.

\section{New proofs of known theorems}
\label{s:known}

In this section we would like to demonstrate the strength and applicability of our characterization by providing new, simpler proofs of the theorems of Kuratowski, Komj\'ath, Elekes and Stepr\=ans. In case of Komj\'ath's result our proof does not use the technique of forcing, which is an answer to a question of Laczkovich \cite{laczk}.

We would like to remark here that the above authors mainly investigated $\mc{B}_1(\R)$ and $\mc{B}_1(\omega^\omega)$, but as we saw in Corollary \ref{c:baire1same} the statements do not depend on the underlying (uncountable) Polish space, so we will state them slightly more generally.  

\subsection{Kuratowski's theorem}

\begin{theorem}
 \label{t:kuratowski} (Kuratowski, \cite[\textsection 24. III.2.]{kur})
 $\omega_1$ and $\omega^*_1$ are not
representable in $\mathcal{B}_1$.
\end{theorem}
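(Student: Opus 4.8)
The plan is to deduce Kuratowski's theorem purely combinatorially from the Main Theorem (or rather from Theorem \ref{t:emb} alone, which gives the embedding $\mathcal{B}_1(X) \hookrightarrow \un$). By that embedding it suffices to show that $\omega_1$ and $\omega_1^*$ do not embed into $\un$, i.e. that there is no strictly $<_{altlex}$-increasing and no strictly $<_{altlex}$-decreasing $\omega_1$-sequence in $[0,1]^{<\omega_1}_{\searrow 0}$. Since reversing the parity convention (or composing with an order reversal) interchanges the increasing and decreasing cases in $<_{altlex}$, it is enough to handle one of them, say the increasing case; I would state this reduction explicitly.

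\textbf{The combinatorial core.} Suppose toward a contradiction that $(\bar{x}^\eta)_{\eta<\omega_1}$ is strictly $<_{altlex}$-increasing in $\uni$. For $\eta<\eta'$ write $\delta_{\eta,\eta'} = \delta(\bar{x}^\eta,\bar{x}^{\eta'})$ for the first coordinate where the two sequences disagree. The first thing I would observe is a "fusion" fact: along the increasing sequence, for each fixed $\eta$ the value $\bar{x}^\eta_\gamma$ is eventually determined, in the sense that one can extract a cofinal sub-sequence (still of length $\omega_1$) on which the sequences converge coordinatewise, building a tree of initial segments. More concretely, I would run the following pressing-down / tree argument: attach to each pair $\eta<\eta'$ the ordinal $\delta_{\eta,\eta'}$ together with the common initial segment $\bar{x}^\eta|_{\delta_{\eta,\eta'}} = \bar{x}^{\eta'}|_{\delta_{\eta,\eta'}}$ and the pair of reals $x^\eta_{\delta_{\eta,\eta'}} < \text{(or} >)\ x^{\eta'}_{\delta_{\eta,\eta'}}$ according to parity. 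The key point is that a strictly decreasing transfinite sequence of reals in $[0,1]$ is countable, so each individual $\bar{x}^\eta$ has countable length; hence each $\delta_{\eta,\eta'}$ is a countable ordinal, and the "branching data" lives in a separable space.

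\textbf{Extracting the contradiction.} I expect the cleanest route is: by uncountably many applications of the pigeonhole/$\Delta$-system-type pruning, find an uncountable $I\subseteq\omega_1$ and a single countable ordinal $\delta^*$ and a single initial segment $\bar{s}$ of length $\delta^*$ such that for all $\eta<\eta'$ in $I$ we have $\delta_{\eta,\eta'}=\delta^*$ and $\bar{x}^\eta|_{\delta^*}=\bar{s}$. (This uses that each $\bar{x}^\eta$ is countable, so the map $\eta\mapsto(\text{length of }\bar{x}^\eta)$ is regressive-ish and one can stabilize; and that for $\eta<\eta'<\eta''$ in such a set the ultrametric-type inequality $\delta_{\eta,\eta''}\ge\min(\delta_{\eta,\eta'},\delta_{\eta',\eta''})$ forces the $\delta$'s to stabilize on an uncountable set.) Then, restricted to $I$, the order $<_{altlex}$ reduces to comparing the single real coordinate $x^\eta_{\delta^*}\in[0,1]$ — monotonically, with a fixed sign depending only on the parity of $\delta^*$. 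So $(\bar{x}^\eta)_{\eta\in I}$ being strictly $<_{altlex}$-monotone yields a strictly monotone $\omega_1$-sequence of \emph{reals}, which is impossible. This contradiction proves the claim, and hence the theorem.

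\textbf{Main obstacle.} The delicate part is justifying the simultaneous stabilization of both $\delta_{\eta,\eta'}$ and the initial segment $\bar{x}^\eta|_{\delta^*}$ on an uncountable set: one must combine the countability of each $\bar x^\eta$ (so there are only countably many relevant coordinates per sequence), the ultrametric inequality on the $\delta$'s, and a pigeonhole on $\omega_1$. I would organize this as: first stabilize $l(\bar{x}^\eta)$ below a fixed countable bound on an uncountable set (using that an uncountable set of countable ordinals is unbounded but we only need the lengths bounded — actually we need a Fodor-type or a simple counting argument here, since lengths need not be bounded, so the honest statement is that we stabilize $\delta^*$ first via the ultrametric trick and then $\bar s$ via separability of $[0,1]^{\delta^*}$ in the relevant sense). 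This is exactly the place where a forcing-free argument earns its keep, and I would present it as the one genuinely non-routine lemma, deferring the routine parity bookkeeping of $<_{altlex}$ to a remark.
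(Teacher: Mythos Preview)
Your reduction to showing $\omega_1, \omega_1^* \not\hookrightarrow \un$ via the Main Theorem is exactly the paper's first move. The gap is precisely where you flag it: the stabilization of a single $\delta^*$ with $\delta_{\eta,\eta'}=\delta^*$ for all $\eta<\eta'$ in an uncountable $I$ is not delivered by the tools you name. The ultrametric inequality alone does not force an uncountable equidistant set (take an uncountable antichain in a special Aronszajn tree with the tree metric: any uncountable equidistant subset would force some level of the tree to be uncountable). Fodor does not apply since $\eta\mapsto l(\bar x^\eta)$ is not regressive and the lengths need not be bounded. And once $\delta^*$ were fixed the common initial segment $\bar s$ would be automatic (all pairs agree below $\delta^*$), so ``separability of $[0,1]^{\delta^*}$'' is beside the point. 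If you try to run the natural recursion --- fix a base point, thin to constant $\delta$ relative to it, then repeat inside that set --- you must at each step pass to a merely uncountable subset, and the recursion dies at the first limit stage because countable intersections of uncountable subsets of $\omega_1$ can be empty.

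The paper sidesteps this by never seeking a single $\delta^*$. Instead it builds a $\subsetneqq$-increasing chain $(s_\alpha)_{\alpha<\omega_1}$ in $\sigma^*[0,1]$ with $l(s_\alpha)=\alpha$ and the invariant that $\{\gamma : s_\alpha \subset f_\gamma\}$ contains an \emph{end segment} of $\omega_1$. The successor step uses exactly your parity observation: restricted to that end segment, $\gamma \mapsto f_\gamma(\alpha-1)$ is weakly monotone into $[0,1]$ by the definition of $<_{altlex}$, hence eventually constant on a further end segment; extend $s_{\alpha-1}$ by that constant value. Because end segments are closed under countable intersections, limit stages go through trivially. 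The contradiction is then immediate: $\bigcup_\alpha s_\alpha$ is a strictly decreasing $\omega_1$-sequence of reals. In effect, your recursion \emph{would} work if you insisted on end segments rather than arbitrary uncountable subsets --- but then you never actually find a $\delta^*$; you find the chain, and that is already the contradiction.
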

\begin{proof}
 By the Main Theorem it is enough to prove that $\omega_1 \not \hookrightarrow
[0,1]^{<\omega_1}_{\searrow 0}$ and $\omega^*_1 \not \hookrightarrow [0,1]^{<\omega_1}_{\searrow 0}$. We
will prove the former statement, the proof of the latter is the same.
 
 Suppose that $(f_{\alpha})_{\alpha<\omega_1}$ is a strictly increasing sequence in
$\uni$. Now we define a sequence
$\{s_\alpha:\alpha<\omega_1\} \subset  \sigma^*[0,1]$ that is strictly increasing with respect to containment. Notice that this will yield a
contradiction, since $\cup_{\alpha<\omega_1} s_\alpha$ would be an
$\omega_1$-long strictly decreasing sequence of reals.  
 
 We define the  sequence $s_\alpha$ by induction on $\alpha$ with the
following properties:
 \begin{equation}
 \label{e:indhyp}
 \text{$l(s_\alpha) = \alpha$ and $\{\gamma:s_\alpha \subset f_\gamma\}$
contains an end segment of $\omega_1$.}
 \end{equation}
  First, $s_0=\emptyset$ clearly works. Now suppose that we are done for every
$\beta<\alpha$.
 
 If $\alpha$ is a limit let $s_\alpha=\cup_{\beta<\alpha} s_\beta$.
 Then \[\{\gamma:s_\alpha \subset f_\gamma\}=\bigcap_{\beta<\alpha}
\{\gamma:s_\beta \subset f_\gamma\}\]
 so the set $\{\gamma:s_\alpha \subset f_\gamma\}$ is the intersection of
countably many sets that contain end segments, hence it contains an end segment. Therefore, \eqref{e:indhyp} holds.
 
 Let $\alpha$ be a successor. Let $S=\{\gamma:s_{\alpha-1} \subset f_\gamma\}$.
If $\gamma,\gamma' \in S$ with $\gamma<\gamma'$ then clearly $f_\gamma
<_{altlex} f_{\gamma'}$. By $s_{\alpha-1} \subset f_\gamma$, $s_{\alpha-1}
\subset f_{\gamma'}$ and $l(s_{\alpha-1})=\alpha-1$ we obtain that
$\delta(f_\gamma,f_{\gamma'}) \geq \alpha-1$. So either
$f_\gamma(\alpha-1)=f_{\gamma'}(\alpha-1)$ or
$f_{\gamma}(\alpha-1)<f_{\gamma'}(\alpha-1) \text{ if } \alpha-1 \text{ is even
and }f_{\gamma}(\alpha-1)>f_{\gamma'}(\alpha-1) \text{ if } \alpha-1 \text{ is
odd.}$
Therefore,
$f_{\gamma}(\alpha-1) \leq f_{\gamma'}(\alpha-1)$ if $\alpha-1$ is
even and $f_{\gamma}(\alpha-1) \geq f_{\gamma'}(\alpha-1) \text{ if } \alpha-1$ is
odd.
 Consequently, the map $\gamma \mapsto f_\gamma(\alpha-1)$ is order preserving from $S$ to the unit interval if $\alpha-1$ is even and order reversing if
$\alpha-1$ is odd. But $S$ contains an end segment by induction, and $[0,1]$ contains no subset of type $\omega_1$ or $\omega^*_1$, hence this map attains a constant value, say $r$ on an end segment. Thus, $s_\alpha =s_{\alpha-1} \concat r$ satisfies (\ref{e:indhyp}).\end{proof}

\subsection{Komj\'ath's theorem}
Komj\'ath \cite{komjath} has shown using forcing that a Suslin line is not
representable in $\mathcal{B}_1(\R)$. Laczkovich \cite{laczk} asked if a forcing-free proof exists. Now we provide such a proof.

\begin{theorem}
\label{t:suslin} (Komj\'ath, \cite{komjath})
A Suslin line is not representable in $\mc{B}_1$.
\end{theorem}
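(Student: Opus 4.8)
By the Main Theorem it suffices to show that no Suslin line embeds into $\un$. The plan is to argue by contradiction: suppose $L$ is a Suslin line and $\iota : L \hookrightarrow \un$ is an order embedding. The strategy mirrors the proof of Kuratowski's theorem above, but instead of building a single strictly decreasing $\omega_1$-sequence of reals we build an uncountable tree of "stems" indexed along $L$, and we extract from it either an uncountable antichain or an uncountable chain in an Aronszajn-type tree — each of which contradicts Suslinity (or the well-foundedness of $\un$ coordinates, exactly as in Theorem \ref{t:kuratowski}). Concretely, for each $x \in L$ write $\iota(x) = (f^x_\alpha)_{\alpha \le \xi_x}$; for two points $x <_L y$ the splitting ordinal $\delta(\iota(x),\iota(y))$ and the parity-alternating comparison of the $\delta$-th coordinates records, roughly, "how deep" $x$ and $y$ agree. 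The key combinatorial object is the tree $T$ of all finite-or-transfinite initial segments $\bar s \in \sigma^*[0,1]$ that occur as a common stem $\iota(x)|_\alpha$ for an interval's worth of $x$'s; ordering $T$ by end-extension makes it a tree, and one shows each level is at most countable using the ccc hypothesis on $L$ (a family of $x$'s with pairwise distinct values at a fixed coordinate of fixed even/odd parity yields, via monotonicity of $\gamma \mapsto f^x_\delta(\cdot)$ on the relevant interval as in Theorem \ref{t:kuratowski}, a family of pairwise disjoint open intervals of $L$).

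First I would set up the tree $T$ carefully: a node is a pair $(\bar s, I)$ where $I \subseteq L$ is a nonempty open interval, $\bar s = \iota(x)|_{l(\bar s)}$ for all $x \in I$, and $\bar s$ is "maximal for $I$" in the sense that the next coordinate is not constant on $I$; order by $(\bar s, I) \le (\bar s', I')$ iff $\bar s \subseteq \bar s'$ and $I \supseteq I'$. Second, I would verify the tree axiom (the predecessors of a node are linearly ordered by stem-extension, hence well-ordered) and that every $x \in L$ determines a branch. Third — the crucial step — I would show each level $\mathrm{Lev}_\alpha(T)$ is countable: two distinct nodes at the same level split at their last coordinate, and at an even coordinate the splitting means $f^x_\delta < f^y_\delta$ (odd: reversed) on disjoint intervals; partitioning the unit interval's worth of possible values by rationals produces a ccc-violating family unless only countably many nodes occur. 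Fourth, since $L$ is not separable, the branches cannot all be "caught" by a countable dense set, which forces $T$ to have height $\omega_1$; combined with countable levels, either $T$ has an uncountable antichain (contradicting ccc of $L$ through the interval labels, since incomparable nodes carry disjoint intervals) or an uncountable branch — and an uncountable branch yields an $\omega_1$-long strictly decreasing sequence of reals exactly as in the proof of Theorem \ref{t:kuratowski}, a contradiction.

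The main obstacle I anticipate is the third step — ruling out uncountable levels — because it requires translating "Suslin line" (ccc $+$ non-separable) into a statement about the coordinates of $\un$, and one must be careful that the intervals $I$ attached to distinct same-level nodes are genuinely pairwise disjoint and nonempty (this needs the maximality clause in the definition of a node, and a limit-stage argument to handle the case where a stem has limit length). A secondary subtlety is the limit levels of $T$: at a limit ordinal $\alpha$ one must check that the union of a branch through level-$<\alpha$ nodes is again of the form $\iota(x)|_\alpha$ on a nonempty open interval, which is where one uses that $\un$ consists of genuine transfinite sequences and that a decreasing intersection of nonempty open intervals of $L$ along a countable chain, while possibly empty as an interval, still leaves an endpoint around whose stem can be read off — here I would instead work with nested \emph{closed} intervals or with the closure/infimum trick so that the branch always terminates at an actual point of $L$. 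Once the tree is correctly set up, the contradiction is then a clean dichotomy: countable levels $+$ uncountable height $+$ no uncountable antichain is impossible for a tree whose antichains correspond to pairwise disjoint open intervals of a ccc order, and an uncountable chain is killed by the Kuratowski-style argument already in the excerpt.
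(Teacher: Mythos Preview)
Your setup is essentially the same as the paper's: build the tree of common stems $\bar s \in \sigma^*[0,1]$ that extend to at least two (or an interval of) points of the embedded Suslin line, show it has no uncountable chains (the $\omega_1$-decreasing-reals obstruction) and no uncountable antichains (incomparable stems give disjoint intervals, so ccc applies), and show it is uncountable (non-separability). But your final ``clean dichotomy'' is where the argument breaks: a tree with countable levels, uncountable height, no uncountable antichain, and no uncountable chain is \emph{not} impossible --- that is precisely the definition of a Suslin tree, and such trees consistently exist. So once you have established those four properties you have produced a Suslin tree, not a contradiction.

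The missing idea is the one extra piece of structure your tree carries: its nodes are strictly decreasing transfinite sequences of reals. The paper exploits this by restricting to the successor levels $T|_{succ}$ (still a Suslin tree) and defining $f(t) = t(l(t)-1)$, the last coordinate of $t$. Because each $t$ is strictly decreasing and end-extension makes the last coordinate strictly smaller, $1-f$ is a strictly increasing map $T|_{succ} \to \R$, i.e.\ the Suslin tree is $\R$-special. The contradiction then comes from the classical (and forcing-free) fact that a Suslin tree cannot be $\R$-special (Lemma~\ref{l:suslinrspec} in the paper: an $\R$-specializing map forces a cofinal chain to be eventually constant via a density argument). So your outline is correct up to the point where you hold a Suslin tree inside $\sigma^*[0,1]$; you just need to replace the false chain/antichain dichotomy with the observation that this particular Suslin tree is $\R$-special, which is impossible.
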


{\sc Notation.} Let $(T,<_T)$ be a tree. We denote by $T|_{succ}$ the set $\{t\in T: t \in Lev_\alpha(T), \alpha \text{ is a successor}\}$ ordered by the restriction of $<_T$. Notice that $T|_{succ}$ is also a tree, but it is not a subtree of $T$. If $t \in \sigma^*[0,1]$ we will use the notation $I_t$ for the set $\{\bar{x} \in \uni:t \subset \bar{x}\}$.

\begin{lemma}
\label{l:suslin}
 Suppose that  $\mathcal{S} \subset [0,1]^{<\omega_1}_{\searrow 0}$ is a nowhere separable Suslin line. Then $\sigma^*[0,1]|_{succ}$
contains a Suslin tree.
\end{lemma}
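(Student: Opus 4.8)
The goal is to extract a Suslin tree from a nowhere separable Suslin line $\mathcal{S} \subset \uni$. The natural candidate for the tree is the set of finite-length ``initial segments'' that are realized in $\mathcal{S}$: more precisely, I would look at all $t \in \sigma^*[0,1]$ such that $I_t \cap \mathcal{S}$ is nonempty, ordered by end-extension. Since $\mathcal{S}$ sits inside $\uni$, every $\bar{x} \in \mathcal{S}$ is a transfinite decreasing sequence, so the collection of its truncations at successor ordinals is a natural tree, and taking the union over all of $\mathcal{S}$ gives a subtree $T$ of $\sigma^*[0,1]|_{succ}$. The restriction to successor levels is exactly what the notation $T|_{succ}$ is set up for, and it matters because at a successor level $\alpha$ the set $I_t$ for $t \in Lev_\alpha$ is an $<_{altlex}$-interval (the last coordinate of $t$ is a genuine real we can compare against), whereas at limit levels the interval structure degenerates.

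**Key steps, in order.** First I would define $T$ as above and observe it is a tree under end-extension, with each node $t$ of successor length corresponding to the interval $I_t \cap \mathcal{S}$ in $\mathcal{S}$; comparable nodes give nested intervals and incomparable nodes of the same type give essentially disjoint intervals (their interiors are disjoint, using that $<_{altlex}$ on a successor-length extension is controlled by one real coordinate). Second, I would verify the \textbf{ccc / no uncountable antichain} property: an uncountable antichain in $T$ would produce an uncountable family of pairwise ``disjoint'' intervals in $\mathcal{S}$, contradicting that $\mathcal{S}$ is ccc — here one must be slightly careful because $I_t \cap \mathcal{S}$ could a priori be a single point or have empty interior, but nowhere separability is exactly the hypothesis that rules out $\mathcal{S}$ being concentrated, so one can refine the antichain to one giving genuinely nonempty open intervals. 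Third, I would verify \textbf{no uncountable chain}: an uncountable chain in $T$ is an increasing union giving a strictly decreasing transfinite sequence of reals of length $\omega_1$, which is impossible in $[0,1]$ — this is the same $\omega_1 \not\hookrightarrow [0,1]$ phenomenon used in the proof of Kuratowski's theorem. Fourth, \textbf{levels are countable}: this follows from the no-uncountable-antichain property once we know $T$ is an Aronszajn-type tree — actually it is cleaner to argue that each level, being an antichain, is countable by ccc, and that $T$ has height $\leq \omega_1$ since chains are countable; combined, $|T| \leq \aleph_1$, and if it were countable then $\mathcal{S}$ would be separable (the realized nodes would give a countable dense set), contradiction, so $|T| = \aleph_1$. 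Putting these together, $T$ is an Aronszajn tree with no uncountable antichain, i.e. a Suslin tree, sitting inside $\sigma^*[0,1]|_{succ}$.

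**Main obstacle.** The delicate point is the interplay between the \emph{order} on $\mathcal{S}$ and the \emph{tree} structure: I must show that incomparable successor-level nodes really do correspond to order-disjoint pieces of $\mathcal{S}$, and that these pieces are ``fat'' enough (have nonempty interior in $\mathcal{S}$) that ccc of $\mathcal{S}$ bites. This is where \emph{nowhere separability} — rather than mere non-separability — is essential: it guarantees that below every node the line $\mathcal{S} \cap I_t$ is still non-separable, hence in particular has more than one point and its order-convex pieces can be taken with nonempty interior, so an uncountable antichain in $T$ genuinely yields an uncountable pairwise-disjoint family of open intervals. A secondary subtlety is checking that $T$ is closed under truncation at successor ordinals and that the resulting poset is literally a suborder of $\sigma^*[0,1]|_{succ}$ with the induced order being the tree order — this is bookkeeping but needs the observation that the minimal-difference ordinal $\delta$ between two elements of $\mathcal{S}$ governs both their $<_{altlex}$-comparison and their position in the tree. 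I expect the no-uncountable-chain and countable-level verifications to be short; the antichain-to-disjoint-intervals translation is the step deserving the most care.
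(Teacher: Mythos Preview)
Your overall strategy is the same as the paper's, but your choice of tree has a genuine gap. You take $T$ to consist of all successor-length $t\in\sigma^*[0,1]$ with $I_t\cap\mathcal{S}\neq\emptyset$. Observe, however, that every $\bar{x}\in\uni$ already has successor length (its domain is $\xi+1$), and since $\bar{x}$ ends in $0$ the only element of $\uni$ extending $\bar{x}$ is $\bar{x}$ itself; hence $I_{\bar{x}}=\{\bar{x}\}$. Consequently $\mathcal{S}\subset T$, and distinct elements of $\uni$ are $\subset$-incomparable, so $\mathcal{S}$ itself is an uncountable antichain in your $T$. Your tree is therefore not Suslin. Your proposed remedy (``refine the antichain to one giving genuinely nonempty open intervals'') cannot work here: for this particular antichain every $I_t\cap\mathcal{S}$ is a singleton, and nowhere separability of $\mathcal{S}$ gives you nothing to refine to.

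The paper's fix is to define $T=\{t\in\sigma^*[0,1]:|\mathcal{S}\cap I_t|\geq 2\}$, which in one stroke excludes the maximal nodes (the elements of $\mathcal{S}$) and makes the antichain $\to$ disjoint-open-intervals translation go through cleanly. The price is that uncountability of $T$ is no longer automatic, and your one-line argument (``if $T$ were countable the realized nodes would give a countable dense set'') does not suffice: picking one point from each $I_t\cap\mathcal{S}$ with $t\in T$ need not produce a dense set, because the node $t=\bar{z}|_{\delta+1}$ you would want to use for a given interval might satisfy only $|I_t\cap\mathcal{S}|=1$. The paper handles this by an extra step: it first notes that for each $t\in T$ the set $\{r:I_{t\concat r}\cap\mathcal{S}\neq\emptyset\}$ is countable (an uncountable such set would inject a real order type into the Suslin line $\mathcal{S}$), and then picks one witness from each such one-step extension. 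This refined countable set is what one shows to be dense. So the missing ingredient in your plan is precisely this use of the fact that a Suslin line contains no uncountable real suborder.
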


\begin{proof}
Let \begin{equation}
     \label{e:suslin}
     T=\{t\in \sigma^*[0,1]:|\mc{S} \cap I_t|\geq 2\}.
    \end{equation}
We claim that $(T,\subsetneqq)$ is a Suslin tree. 

First, $T$ is clearly a subtree of $(\sigma^*[0,1],\subsetneqq)$ and  $\sigma^*[0,1]$
does not contain uncountable chains hence this is true for $T$ as well.

Second, let $A \subset T$ be an antichain. Notice that for every pair of
incomparable nodes $t,t'\in T$ the sets $I_{t}$ and $I_{t'}$ are disjoint
subsets of $([0,1]^{<\omega_1}_{\searrow 0},<_{altlex})$, hence $I_{t} \cap \mc{S}$ and
$I_{t'} \cap \mc{S}$ are also disjoint convex sets in $\mc{S}$. By
(\ref{e:suslin}) these convex sets are non-degenerate. Since $A \subset T$ is an
antichain the set $\{I_t \cap \mc{S}:t \in A\}$ is a collection of pairwise
disjoint nonempty convex sets in $\mc{S}$. Using that $\mc{S}$ is nowhere
separable for every $t$ we can select a $J_t \subset I_t$ such that $\mc{S} \cap
J_t$ is a nonempty open convex set. By definition $\mc{S}$ is ccc so the set
$\{J_t \cap \mc{S}:t \in A\}$ is countable. Hence $A$ is countable, showing that
$T$ does not contain uncountable antichains.

Third, it is left to show that $T$ is uncountable. Suppose the contrary.  Notice
first that for every $t \in T$ the set $\{r \in [0,1]:\mc{S}\cap I_{t \concat r}
\not =\emptyset\}$ is countable, otherwise, choosing points $\f{p}_r \in
\mc{S} \cap I_{t\concat r}$ the map $r \mapsto \f{p}_r$ would give an
uncountable real subtype of $\mc{S}$, which is impossible (see \cite[Proposition
3.5]{tod}). Hence, as $T$ is also countable, we can select a countable subset
$D$ of $\mathcal{S}$ with the following property: for every $t \in T$ and $r \in
[0,1]$ such that $\mc{S}\cap I_{t \concat r} \not =\emptyset$ there exists a point
$\f{p} \in D$ such that $\bar{p} \in I_{t \concat r}$.

We claim that $D$ is dense in $\mc{S}$ which will contradict the
non-separability of $\mc{S}$. In order to see this let $J \subset \mc{S}$ be a nonempty
open interval.  By passing to a subinterval of $J$ (using that $\mc{S}$ is
nowhere separable) we can assume that $J$ is of the form $[\bar{x},\bar{y}] \cap
\mc{S}$ with $\bar{x} \not = \bar{y}$. Let $\bar{z} \in (\bar{x},\bar{y}) \cap
\mc{S}$ (such a $\bar{z}$ exists by the fact that $\mc{S}$ is nowhere
separable). Clearly $\bar{x}<_{altlex}\f{z}<_{altlex}\f{y}$. Let
$\delta_{\bar{x}}=\delta(\f{x},\f{z})$ and
$\delta_{\bar{y}}=\delta(\f{y},\f{z})$. Then $l(\f{z}) \geq
\max\{\delta_{\bar{x}},\delta_{\bar{y}}\}+1$ and
\begin{equation}
\label{e:xz}
\f{x}(\delta_{\bar{x}})<\f{z}(\delta_{\bar{x}}) \iff \delta_{\bar{x}} \text{
even and }
\f{z}(\delta_{\bar{y}})<\f{y}(\delta_{\bar{y}}) \iff \delta_{\bar{y}} \text{
even.}
\end{equation}

Suppose that $\delta_{\f{x}} \geq \delta_{\f{y}}$, the proof of the other case
is the same.  If 
$t=\f{x} \cap \f{z}$, then $\{\f{x},\f{z}\} \subset I_t$, so by (\ref{e:suslin})
we have $t \in T$. Clearly, \[\bar{z} \in \mc{S}\cap
I_{\f{z}|_{\delta_{\f{x}}+1}}=\mc{S} \cap I_{t \concat \f{z}(\delta_{\f{x}})}\] hence, by the
definition of $D$ we obtain that there exists a $\bar{p} \in D \cap I_{t \concat
\f{z}(\delta_{\f{x}})}.$  We have $\bar{p}|_{\delta_{\f{x}}+1}=\f{z}|_{\delta_{\f{x}}+1}$ so from 
$\delta_{\bar{x}} \geq \delta_{\f{y}}$ we get
\[\delta(\f{x},\f{p})=\delta_{\bar{x}}  \text{ and
}\delta(\f{y},\f{p})=\delta_{\bar{y}},\]
moreover \[\f{p}(\delta_{\bar{x}})=\f{z}(\delta_{\bar{x}}) \text{ and }
\f{p}(\delta_{\bar{y}})=\f{z}(\delta_{\bar{y}}).\]

Therefore, using (\ref{e:xz}) we obtain that
$\bar{x}<_{altlex}\f{p}<_{altlex}\f{y}$, so $\bar{p} \in D \cap
(\bar{x},\bar{y}) \subset D \cap J$. So $D$ is a countable dense subset of $\mc{S}$, a contradiction. 

This yields that $T$ is uncountable, hence it is indeed a Suslin tree.

Finally, notice that $T$ is a subtree of $\sigma^*[0,1]$ so $T|_{succ} \subset \sigma^*[0,1]|_{succ}$. Let $T'=T|_{succ}$. Clearly, $T'$ is a subset of $T$ and by definition the ordering of $T'$ is the restriction of the ordering of $T$, so $T'$ does not contain uncountable chains or antichains. 
In order to see that $T'$ is
uncountable first notice that the lengths of the elements in $T$ are unbounded in
$\omega_1$, therefore the lengths of the elements on the successor levels are also
unbounded. Hence $T'$ is uncountable so $T'$ is also a Suslin tree, which completes the proof of the lemma.
\end{proof}
For the sake of completeness we will prove the following classical facts about
Suslin trees.

\begin{lemma}
\label{l:suslindense}
If $D$ is a dense open subset of the Suslin tree $T$ then $T \setminus D$ is countable.
\end{lemma}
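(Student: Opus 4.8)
\textbf{Proof plan for Lemma \ref{l:suslindense}.}

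The plan is to use the fact that a Suslin tree has no uncountable antichains to control the ``badly placed'' elements, namely $T \setminus D$. First I would observe that since $D$ is open (in the tree-order sense: $t \in D$ and $t \leq_T s$ implies $s \in D$), the complement $T \setminus D$ is a downward-closed subset of $T$, hence is itself a subtree of $T$. If $T \setminus D$ were uncountable, it would be a subtree of a Suslin tree with unbounded levels, and I would want to derive a contradiction by producing an uncountable antichain inside $D$.

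The key step is the following: for each $t \in T \setminus D$, use that $D$ is dense to pick some $\varphi(t) \in D$ with $t \leq_T \varphi(t)$. Since $\varphi(t) \in D$ and $t \notin D$, we have $t <_T \varphi(t)$ strictly. The idea is then to extract an uncountable antichain from the image $\{\varphi(t) : t \in T \setminus D\}$. To do this I would work level by level: if $T \setminus D$ is uncountable then, since $T \setminus D$ is a subtree with no uncountable chain (it inherits this from $T$), by a standard argument its levels cannot all be bounded, so it has elements of arbitrarily large countable level; alternatively, one picks a maximal antichain $A$ of $T \setminus D$ — by Suslinness $A$ is countable, so it is contained in levels below some $\alpha_0 < \omega_1$, and every element of $T \setminus D$ is comparable to some element of $A$. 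Then pick any $t_0 \in T \setminus D$ on a level above $\alpha_0$; since $T$'s levels are unbounded and $T \setminus D$ is downward closed, if $T \setminus D$ is uncountable such $t_0$ exist at unboundedly many levels. Now refine: for distinct elements $t, t'$ of a carefully chosen antichain of $T \setminus D$, the extensions $\varphi(t), \varphi(t')$ remain incomparable (an extension of $t$ is incomparable with an extension of $t'$ when $t, t'$ are incomparable and each extension lies strictly above its base). This produces an uncountable antichain in $D \subseteq T$, contradicting that $T$ is Suslin.

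The cleanest route, which I would actually carry out, is: suppose $T \setminus D$ is uncountable. Since $T \setminus D$ has no uncountable chain, a standard tree argument (every element of an uncountable chain-free tree of height $\leq \omega_1$ lies below some element, iterated) — more simply, take a maximal antichain $A$ in $T \setminus D$; it is countable by Suslinness of $T$, hence bounded in level by some $\alpha < \omega_1$, and by maximality every node of $T \setminus D$ is $\leq_T$-comparable to some node of $A$. But then every node of $T \setminus D$ lies on a level $< \alpha$ or above a node of $A$; since each node of $A$ has only countably many predecessors and $T \setminus D$ is downward closed, and each $a \in A$ has countably many nodes of $T \setminus D$ above it only if... — here the argument needs that the "cone above $a$ inside $T \setminus D$" is a chain, which need not hold. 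So instead I fall back on: in $T \setminus D$, being uncountable and having no uncountable chain and (as a subtree of a Suslin tree) no uncountable antichain, it is itself a Suslin tree, in particular has height $\omega_1$; pick $t_\xi \in T \setminus D$ on level $\xi$ for cofinally many $\xi < \omega_1$ forming an antichain (possible in any Suslin tree by taking one node from each level of a subtree and thinning to an antichain), and then $\{\varphi(t_\xi)\}$ is an uncountable antichain in $T$, contradiction.

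The main obstacle I expect is the combinatorial bookkeeping in passing from ``$T \setminus D$ is uncountable'' to ``$T \setminus D$ contains an uncountable antichain'' — one must invoke that a subtree of a Suslin tree with unbounded levels is again Suslin (no uncountable chains since $T$ has none; no uncountable antichains since $T$ has none; uncountable since its levels are unbounded because it is downward closed and uncountable — this last point requiring that each level of $T$ is countable, which holds), and that any Suslin tree, being of height $\omega_1$ with countable levels, contains an uncountable antichain is false in general (Suslin trees by definition have no uncountable antichains!). So the argument must instead directly contradict Suslinness of $T$: I take, for each countable level $\xi$ on which $T \setminus D$ is nonempty, a node $t_\xi \in T \setminus D$; thin the $t_\xi$ to an uncountable set on which they are pairwise incomparable — but this thinning is exactly what could fail. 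The correct fix, and the heart of the proof, is to note that we do not need the $t_\xi$ themselves incomparable: it suffices that we cannot have $T \setminus D$ uncountable because then, letting $A$ be a maximal antichain of $T \setminus D$ (countable, by Suslinness of $T$), density of $D$ gives for each $a \in A$ an extension in $D$; the set $T \setminus D$ being covered by the cones below the (countably many) minimal elements of $D \cap \{s : s \geq_T a\}$ over $a \in A$ forces, via König-type counting, that $T\setminus D$ is countable — this counting is the delicate point and is where I would spend the most care.
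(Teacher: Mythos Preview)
Your proposal wanders through several false starts and never settles on a complete argument. The attempts to find an uncountable antichain inside $T\setminus D$ or inside $\{\varphi(t):t\in T\setminus D\}$ cannot work for exactly the reason you eventually notice: $T$ is Suslin, so \emph{no} subset of $T$ contains an uncountable antichain. Your final paragraph gestures at a workable idea (minimal elements of $D$ above each $a\in A$, then count predecessors), and in fact that idea can be completed: those minimal elements form an antichain in $T$, hence a countable set, and every element of $T\setminus D$ lies strictly below one of them, so $T\setminus D$ is a countable union of countable chains. But you explicitly flag this step as ``delicate'' and leave it undone, so as written the proposal has a genuine gap.

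The paper's proof avoids all of this by taking the maximal antichain \emph{in $D$} rather than in $T\setminus D$. Let $A$ be a maximal antichain in $D$; by Suslinness of $T$ it is countable, so its levels are bounded by some $\alpha<\omega_1$. Now for any $t$ on a level $\geq\alpha$, density gives $s_0\in D$ with $t\leq_T s_0$; maximality of $A$ in $D$ gives $s_1\in A$ comparable with $s_0$, and the level bound forces $s_1\leq_T t$; openness of $D$ then gives $t\in D$. Hence $T\setminus D$ is contained in the countably many countable levels below $\alpha$. The single move you were missing is to place the antichain in $D$ and use openness directly, rather than placing it in $T\setminus D$ and trying to climb out.
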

\begin{proof}
Let $A$ be a maximal antichain in $D$. Clearly, $A$ is countable. Let $\alpha$
be such that $\alpha>\sup \{l(s):s \in A\}$. Now, if $\beta \geq \alpha$ arbitrary
and $t \in Lev_{\beta}(T)$ then by the density of $D$ there exists an $s_0 \in
D$ such that $t \leq_T s_0$. From the facts that $A$ is maximal and
$\beta\geq\alpha$ we obtain that for some $s_1 \in A$ we have $s_1 \leq_T s_0$ and
hence $s_1 \leq_T t$. But then, as $D$ is open and $A \subset D$ we obtain that
$t \in D$. This finishes the proof of the lemma.
\end{proof}
\begin{lemma}
\label{l:suslinrspec}
A Suslin tree is not $\R$-special.
\end{lemma}
\begin{proof}
Suppose the contrary. Let $T$ be a Suslin tree and $f:T \to \R$ be an order preserving map. We can suppose that $f(T)$ is a subset of $[0,1]$.

 Let $n \in \omega$ and \[D_n=\{t \in T: (\forall s \geq_T
t)(f(s) \leq f(t)+\frac{1}{n+1})\}.\] Clearly, $D_n$ is open. We will show that
it is also dense in $T$. In order to see this let $t_0 \in T$ be arbitrary. Then
either $t_0 \in D_n$ or there exists an $t_1 \geq_T t_0$ such that
$f(t_1)>f(t_0)+\frac{1}{n+1}$. Repeating this argument for $t_1$ we obtain
either that $t_1 \in D_n$ or a $t_2 \geq_T t_1 $ such that
$f(t_2)>f(t_1)+\frac{1}{n+1}>f(t_0)+\frac{2}{n+1}$, etc.
$f(T) \subset [0,1]$ implies that this procedure stops after at most $n+2$
steps, hence we obtain an $s \geq_T t_0$ such that $s \in D_n$. Therefore, the
sets $D_n$ are dense open subsets of $T$. By Lemma \ref{l:suslindense}
the complement of $\cap_{n \in \omega}D_n$ is countable, hence there exists $s<_Tt$ such that $s,t \in \cap_{n \in \omega D_n}$. But then clearly $f(t)=f(s)$, a contradiction. 
\end{proof}
Now we are ready to prove the main result of this subsection.
\begin{proof}[Proof of Theorem \ref{t:suslin}]
Suppose the contrary and let $\mc{S}'$ be a subset of $\mc{B}_1$ order
isomorphic to a Suslin line. By the Main Theorem there exists an embedding
$\Phi_0:\mathcal{S}' \hookrightarrow [0,1]^{<\omega_1}_{\searrow 0}$. For $p,q \in
\mathcal{S}'$ let $p \sim q$ if the interval $[p,q]$ is separable. Then  $\sim$
is an equivalence relation and $\mc{S}=\mathcal{S}'/\sim$ is a nowhere separable
Suslin line  (for the details see \cite[Section 3., p. 252]{tod}). For every $\sim$ equivalence
class $[\cdot]$ fix a representative $p \in \mc{S}'$. It is easy to see that
every equivalence class is a convex set, so the map $\Phi([p])=\Phi_0(p)$ is an
order preserving embedding of $\mc{S}$ into $[0,1]^{<\omega_1}_{\searrow 0}$.

Now we can use Lemma \ref{l:suslin} for $\Phi(\mc{S})$. This yields that there
exists a Suslin tree $T \subset \sigma^*[0,1]|_{succ}$. Assign to each $t \in T$ the
last element of $t$, namely, let $f(t)=t(l(t)-1).$

Let $s,t \in T$ such that $s <_T t$. Then, as $s \not =t$, the sequences $s$ and
$t$ are strictly decreasing and (using that $s<_T t \iff s \subsetneqq t$) $t$
is an end extension of $s$ we obtain that $f(t)<f(s)$. Therefore, the map $1-f$
is a strictly monotone map from the Suslin tree $T$ to $\R$. This contradicts
Lemma \ref{l:suslinrspec}.
\end{proof}

\subsection{Linearly ordered sets of cardinality \texorpdfstring{$<\mathfrak{c}$}{c} and Martin's Axiom}

In this subsection we reprove the results of Elekes and Stepr\=ans from
\cite{marci2}. To formulate the statements, we need some preparation.

Suppose that $(L,<_L)$ is a linearly ordered set. A partition tree $T_L$ of $L$
is defined as follows:
the elements of $T_L$ are certain nonempty convex sets of $L$ ordered by
reverse inclusion. $T_L$ is constructed by induction. Let $Lev_0(T_L)=\{L\}$. 

Suppose that for an ordinal $\alpha$ we have defined $Lev_\beta(T_L)$ for all
$\beta<\alpha$. If $\alpha$ is a successor, for every $I \in Lev_{\alpha-1}(T_L)$ 
fix convex sets $I_0$ and $I_1$  such that $I_0 \cup I_1 =I$ and $I_0
\cap I_1=\emptyset$ if such $I_0,I_1$ exist. Let
\[Lev_{\alpha}(T_L)=\bigcup\{I_0,I_1: I \in Lev_{\alpha-1}(T_L) \}.\]
Now if $\alpha$ is a limit ordinal let 
\[Lev_{\alpha}(T_L)=\{\bigcap_{\beta<\alpha} I_\beta: I_\beta \in
Lev_{\beta}(T_L),\cap_{\beta<\alpha} I_\beta \not = \emptyset\}.\]
Somewhat ambiguously if $t \in T_L$ we will denote the corresponding convex set of $L$ by $N_t$. 

We first verify the next proposition, which is interesting in its
own right.
  
 \begin{proposition}
 \label{p:rspec}
  Let $L$ be a linear ordering such that $T_L$, a partition tree of $L$, is
$\mathbb{R}$-special. Then $L \hookrightarrow \mathcal{B}_1.$
 \end{proposition}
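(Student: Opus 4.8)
The plan is to build an order embedding $L\hookrightarrow\un$ from an $\R$-embedding of $T_L$ and then quote the Main Theorem (equivalence of (1) and (2)). Fix an order preserving map $\varphi\colon T_L\to(0,1)$ for the reverse-inclusion order of $T_L$; along any branch of $T_L$, $\varphi$ is strictly increasing. First I record the shape of the partition tree. Each $x\in L$ lies on a unique branch $b_x=(t^x_\beta)_{\beta\le\gamma_x}$: at a successor level $\beta$ the two pieces of $N_{t^x_{\beta-1}}$ partition it, so $x$ belongs to exactly one of them, and at a limit level $t^x_\beta$ is forced to be the intersection of the earlier $N_{t^x_{\beta'}}$. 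As $T_L$ is $\R$-special it has no uncountable chain, so $\gamma_x<\omega_1$ and $N_{t^x_{\gamma_x}}=\{x\}$ (any $N_t$ with two or more points splits). Two disjoint nonempty intervals of a linear order are comparable, so at each split we may speak of the \emph{low} and the \emph{high} child. For distinct $x,y$, neither branch is an initial segment of the other, so $b_x$ and $b_y$ first differ at a \emph{successor} level $\al=\al(x,y)$, where $t^x_\al,t^y_\al$ are the two children of $p:=t^x_{\al-1}=t^y_{\al-1}$, and $x<_L y$ iff $t^x_\al$ is the low one.

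The encoding: to each node $t$ at a successor level, with $q$ the node at the preceding level below it, assign the budget interval $B_t:=(2^{-\varphi(t)},2^{-\varphi(q)})\subseteq(1/2,1)$, nonempty because $\varphi(t)>\varphi(q)$. The crucial observation is that $\varphi$ increases \emph{strictly} whenever one passes to a strictly deeper node, so even when $q$ sits at a limit level one has $\varphi(q)\ge\sup\{\varphi(s):s<_{T_L}q\}$; consequently, along any branch the intervals $B_{t^x_\beta}$ (over successor $\beta$) are strictly decreasing, each lying entirely below all the earlier ones, including across the limit levels of $b_x$. For a sibling pair, low child $u$ and high child $v$ of a node $p$ at a successor level $\beta$, the intervals $B_u,B_v$ share the right endpoint $2^{-\varphi(p)}$, so $B_u\cap B_v=(2^{-\min(\varphi(u),\varphi(v))},2^{-\varphi(p)})\neq\emptyset$; pick two distinct points of it as $\pi(u),\pi(v)$, ordered so that $\pi(u)<\pi(v)$ precisely when $P(\beta)$ is even, where $P(\beta)$ is the order type of $\{\beta'<\beta:\beta'\text{ a successor ordinal}\}$ --- a quantity depending only on $\beta$. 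Finally set $\f s_x:=(\pi(t^x_\beta))_{\beta\le\gamma_x,\ \beta\text{ a successor}}\concat 0$, the recorded values in increasing order of level, with a final entry $0$.

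Two verifications finish the argument. (i) $\f s_x\in\uni$: its non-final entries lie in $(1/2,1)$, the nesting of the $B$'s makes the whole sequence strictly decreasing (each consecutive step, and each jump across a block of limit levels, is bridged by a node that is $\R$-strictly above everything before it), and the final $0$ is below them all; the index set is a countable set of ordinals together with one extra point, so it has order type $<\omega_1$. (ii) The map is order preserving: if $x<_L y$ with divergence level $\al$, the sequences $\f s_x,\f s_y$ agree at every position below $P(\al)$ (the recorded nodes below level $\al$ coincide) and at position $P(\al)$ they read $\pi(\text{low child})$ and $\pi(\text{high child})$; by construction these satisfy $\pi(\text{low})<\pi(\text{high})$ iff $P(\al)$ is even, which is exactly the defining condition for $\f s_x<_{altlex}\f s_y$. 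Hence $x\mapsto\f s_x$ embeds $L$ into $\un$, and the Main Theorem gives $L\hookrightarrow\mc{B}_1$.

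The step I expect to be the crux --- and the reason for this somewhat indirect encoding --- is making the produced sequence \emph{strictly} decreasing. Recording one real per level of $b_x$ fails at limit levels, where the new value would have to drop strictly below an infimum it need not beat, and naive perturbations lose control after infinitely many limit levels. Recording only the successor levels circumvents this, since every gap in the recorded sequence is then bridged by moving to a node strictly deeper in $T_L$, where $\varphi$ genuinely jumps; and composing $\varphi$ with $r\mapsto 2^{-r}$ confines all values to $(1/2,1)$, bounded away from $0$, so that a final $0$ can legitimately be appended.
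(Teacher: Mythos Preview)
Your proof is correct and follows essentially the same strategy as the paper's: both produce the embedding into $\un$ by recording, at each successor level along the branch through $x$, a real chosen from an interval determined by the $\R$-embedding of the tree (your budget interval $(2^{-\varphi(t)},2^{-\varphi(q)})$ plays exactly the role of the paper's interval $(\Phi(t),\inf\Psi_0(t|_\alpha))$ with $\Phi=2^{-\varphi}$), with sibling values ordered according to the parity of the position so that $<_{altlex}$ recovers $<_L$. The paper phrases this as a transfinite induction on levels building an auxiliary map $\Psi_0\colon T_L\to\sigma^*[0,1]$ with the invariant $\inf\Psi_0(t)\ge\Phi(t)$, whereas you write down the values directly and verify the strict decrease via the nesting of the $B_t$'s; the content is the same.
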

\begin{proof}
 Without loss of generality we can suppose that we have a strictly decreasing map $\Phi:
T_L \to (0,1)$. 
 
 \begin{lemma}
 \label{l:phidef}
 There exists a map $\Psi_0:T_L \to \sigma^*[0,1]$ with the following properties for
every $t,s \in T_L$:
 \begin{enumerate}[(1)]
  \item if $s \leq_{T_L} t$ then $\Psi_0(s) \subset \Psi_0(t)$,
  \item if $N_s<_L N_t$ then  $\Psi_0(s) <_{altlex} \Psi_0(t)$,
  \item $\inf \Psi_0(t)  \geq \Phi(t)$.
 \end{enumerate}
 \end{lemma}
 \begin{proof}
We define $\Psi_0$ inductively on the levels of $T_L$. Suppose that we are done
for every $\beta<\alpha$.

If $\alpha$ is a limit ordinal and $t \in Lev_{\alpha}(T_L)$, let 
\begin{equation} 
\label{e:rspeclim}
\Psi_0(t)=\bigcup_{t'  <_{T_L} t} \Psi_0(t').
\end{equation}
Now let $\alpha$ be a successor ordinal. 
First notice that for every $t \in Lev_{\alpha}(T_L)$ by the fact that $\Phi$ is strictly decreasing and the inductive hypothesis for $t|_\alpha$ we have
\begin{equation} 
 \label{e:rspecnew}
 \Phi(t)<\Phi(t|_{\alpha})  \leq \inf \Psi_0(t|_{\alpha}).
\end{equation}
Let \[A=\{t \in Lev_\alpha(T_L):(\exists s \in Lev_\alpha(T_L))(s \not = t \land t|_{\alpha}=s|_{\alpha})\}.\]
Now, if $t \not \in A$ then using (\ref{e:rspecnew}) there exists an $r \in [0,1]$ such that  
\begin{equation}
\label{e:rspecsuc1}
\Phi(t)<r< \inf \Psi_0(t|_\alpha).
\end{equation}
So let 
\begin{equation}
\label{e:rspecsuc2}
\Psi_0(t)=\Psi_0(t|_{\alpha}) \concat r.
\end{equation}
Notice that if $t \in A$ then there exists exactly one $s \not = t$ such that $s \in
Lev_{\alpha}(T_L)$ and $t|_{\alpha}=s|_{\alpha}$. Hence $A$ is the union of pairs $\{s,t\}$ such that $s,t \in Lev_{\alpha}(T_L)$ and $t \not = s$ and
$t|_\alpha=s|_\alpha$. We will define $\Psi_0(s)$ and $\Psi_0(t)$ simultaneously for such pairs. Since $s$ and $t$ are incomparable, the sets $N_s$ and $N_{t}$ are disjoint, so
either $N_{s}<_L N_{t}$ or $N_{s}>_L N_{t}$.
Using (\ref{e:rspecnew}) and $s|_\alpha=t|_\alpha$ we obtain 
\[
 \Phi(t),\Phi(s) <\Phi(t|_{\alpha})  \leq \inf \Psi_0(t|_{\alpha}).
\]
 From this it follows that we
can choose $r, q \in (0,1)$ such that 
\begin{equation}
\label{e:rspec2}
\Phi(t),\Phi(s) <r,q< \inf \Psi_0(t|_\alpha)
\end{equation}
and 
\begin{equation}
\label{e:rspec3}
N_{s} <_L N_{t} \iff \Psi_0(t|_{\alpha}) \concat q <_{altlex}
\Psi_0(t|_{\alpha}) \concat r, 
\end{equation}
so let
\begin{equation}
\label{e:rspec4}
\Psi_0(t)=\Psi_0(t|_{\alpha}) \concat r \text{ and }
\Psi_0(s)=\Psi_0(t|_{\alpha}) \concat q=\Psi_0(s|_{\alpha}) \concat q. 
\end{equation}
Thus, we have defined $\Psi_0$ on $Lev_{\alpha}(T_L)$ (first on the complement of $A$ then on $A$ as well). We claim that $\Psi_0$
satisfies properties (1)-(3). 

We check (1). Let $s <_{T_L} t$ and $t \in Lev_{\alpha}(T_L)$. If $\alpha$ is a limit ordinal then
by (\ref{e:rspeclim}) clearly $\Psi_0(s) \subset \Psi_0(t)$. If $\alpha$ is a
successor then $s \leq_{T_L} t|_{\alpha}$, hence from the inductive hypothesis and
from equations (\ref{e:rspecsuc1}) and (\ref{e:rspec4}) we obtain (1).

In order to prove (2) let $s$ and $t$ be given with $N_s<_L N_t$. It follows from the definition of the partition tree that $s$ and $t$ are $\leq_{T_L}$ incomparable. Let $\beta$ be the minimal ordinal such that there exist $s',t' \in Lev_{\beta}(T_L)$ with $s' \leq_{T_L} s$, $t' \leq_{T_L} t$ and $s'$ and $t'$ are still incomparable. Suppose first that $\beta=\alpha$. Then, of course $s'=s,$ $t'=t$ and by the definition of a partition tree we have that $\alpha$ cannot be a limit ordinal. Hence, by equations (\ref{e:rspec3}) and (\ref{e:rspec4}) clearly (2)
holds. Now, if $\beta<\alpha$ then $N_{s'}<N_{t'}$. Hence from the inductive hypothesis $\Psi_0(s')<_{altlex}
\Psi_0(t')$ so from property (1) we have $\Psi_0(s)<_{altlex} \Psi_0(t)$.

Finally, in order to see (3) if $\alpha$ is a limit just notice that $\Phi(t) \leq_L
\Phi(t')$ whenever $t' \leq_{T_L} t$ so by the inductive hypothesis we have
\[\Phi(t) \leq \inf_{t'  <_{T_L} t} \Phi(t') \leq \inf_{t'   <_{T_L} t}(\inf
\Psi_0(t'))= \inf \Psi_0(t).
\]
If $\alpha$ is a successor then for $t \not \in A$ by (\ref{e:rspecsuc1})
and (\ref{e:rspecsuc2}), while for $t \in A$ by (\ref{e:rspec2}) and (\ref{e:rspec4}) we get
(3).

Thus the induction works, so we have proved that such a $\Psi_0$ exists.
\end{proof}
Now we define the embedding $L \hookrightarrow [0,1]^{<\omega_1}_{\searrow 0}$. For $x \in
L$ let \[\Psi(x)=(\bigcup_{t\in T_L,\text{ }x \in N_t} \Psi_0(t)) \concat 0.\]
By the definition of a partition tree, if for $s$ and $t$ we have $x \in N_t \cap
N_s$ then $s$ and $t$ are $\leq_{T_L}$-comparable. Hence by property (1) of
$\Psi_0$ for every $x \in L$ we have $\Psi_0(x) \in \sigma^*[0,1]$. Moreover, since
$ran(\Phi) \subset (0,1)$ and by property $(3)$ we have that concatenating
$\bigcup_{t\in T_L,\text{ }x \in N_t} \Psi_0(t)$ with zero will give an element in
$\uni$. 

We claim that the map $\Psi$ is order preserving between $(L,<_L)$ and $\un$.
Let $x,y \in L$ with $x<_L y$. Then there exist  $s,t \in T_L$ such that
$x \in N_{s}$ and $y \in N_{t}$ and $N_{s} <_L N_{t}$. Then by property (2) of
$\Psi_0$ we have $\Psi_0(s) <_{altlex}  \Psi_0(t)$. Therefore,
$\Psi_0(s) \subset \Psi(x)$ and $\Psi_0(t) \subset \Psi(y)$ implies
$\Psi(x)<_{altlex} \Psi(y)$.
\end{proof}
\begin{theorem}
 \label{t:MAordembed}
 (MA) If $L$ is a linearly ordered set of cardinality $<\mathfrak{c}$ then $L$
is representable in $\mathcal{B}_1$ iff $L$ does not contain $\omega_1$ or
$\omega^*_1$. 
\end{theorem}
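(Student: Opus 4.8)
The plan is to use the Main Theorem to reduce everything to the combinatorial ordering $\uni$, and then to use Proposition \ref{p:rspec}, which says that $L$ is representable whenever a partition tree $T_L$ is $\R$-special. The ``only if'' direction is immediate from Theorem \ref{t:kuratowski} (Kuratowski): $\omega_1$ and $\omega_1^*$ are not representable in $\mc{B}_1$, so no representable $L$ can contain a copy of either. For the ``if'' direction, suppose $|L|<\mathfrak c$ and $L$ contains no $\omega_1$ or $\omega_1^*$. By Proposition \ref{p:rspec} it suffices to produce a partition tree $T_L$ of $L$ that is $\R$-special, i.e.\ admits a strictly decreasing map into $(0,1)$. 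So the whole problem becomes: \emph{construct a good partition tree and show it embeds into $\R$.}

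First I would analyze the structure of $T_L$. Each node carries a nonempty open interval $N_t$ of $L$, children refine the parent into two disjoint pieces, and at limits one takes nonempty intersections. Since $L$ contains no copy of $\omega_1$ or $\omega_1^*$, one should first argue that $T_L$ has no uncountable chains: an uncountable chain in $T_L$ would, via the endpoints of the nested intervals, yield an $\omega_1$- or $\omega_1^*$-sequence inside $L$ (this is the same endpoint-extraction idea used in the proof of Theorem \ref{t:kuratowski}). Next, the key point is to bound the \emph{width}: under MA$_{<\mathfrak c}$ and $|L|<\mathfrak c$, every antichain in $T_L$ is countable. The natural route is to observe that incomparable nodes correspond to disjoint open intervals of $L$, so an uncountable antichain would give an uncountable family of pairwise disjoint open intervals; one then wants to run a ccc / Knaster-type argument — if $L$ fails ccc we can extract a Suslin-like or $\omega_1$-type obstruction, and MA kills Suslin lines — to conclude that such an antichain cannot exist, or at worst is confined to a single equivalence class which can be handled separately. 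Having no uncountable chains and no uncountable antichains, and having each level countable in the relevant sense, forces $T_L$ to be a tree of size $<\mathfrak c$ with countable levels and no uncountable branches; such a tree (being special under MA, cf.\ the standard fact that under MA$_{\aleph_1}$ every Aronszajn tree of size $<\mathfrak c$ is special, or more directly that a tree with no uncountable chain is a countable union of antichains under MA) is then $\mathbb{Q}$-special, hence $\R$-special, so we may assign to each node a rational and obtain the required strictly decreasing $\Phi:T_L\to(0,1)$ by composing with an order isomorphism $\Q\cap(0,1)\to\Q\cap(0,1)$ arranged to be decreasing along $<_{T_L}$.

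Once $T_L$ is shown to be $\R$-special, Proposition \ref{p:rspec} delivers $L\hookrightarrow\mc{B}_1$ and we are done. The main obstacle I expect is the width estimate: controlling antichains in $T_L$ is exactly where one needs MA and $|L|<\mathfrak c$ together, and the subtle point is that an uncountable antichain of $T_L$ produces an uncountable pairwise-disjoint family of open intervals of $L$, which is \emph{not} by itself contradictory — one must first pass to the nowhere-separable quotient of $L$ (as in the proof of Theorem \ref{t:suslin}) to get a Suslin line, then invoke MA to rule it out, while separable pieces contribute only countably to any antichain. Organizing this dichotomy cleanly — separable blocks versus the nowhere-separable Suslin quotient — and checking that the partition tree can be built so that both parts are handled uniformly, is the technically delicate step; everything else is either Kuratowski's theorem, the tree-specialization consequence of MA, or a direct appeal to Proposition \ref{p:rspec}.
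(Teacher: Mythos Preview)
Your overall strategy matches the paper's exactly: show that a partition tree $T_L$ has no uncountable chains, conclude $|T_L|<\mathfrak c$, apply the Baumgartner--Malitz--Reinhardt theorem to get $T_L$ $\mathbb Q$-special, and finish with Proposition~\ref{p:rspec}. The problem is the middle step, where you try to bound antichains.

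The claim ``under MA and $|L|<\mathfrak c$, every antichain in $T_L$ is countable'' is false, and the Suslin-quotient argument you sketch does not rescue it. Consider a non-ccc special Aronszajn line $A$ (these exist in ZFC, and under MA every Aronszajn line is special but certainly need not be ccc). Then $|A|=\omega_1<\mathfrak c$, $A$ contains neither $\omega_1$ nor $\omega_1^*$, yet $A$ has an uncountable family of pairwise disjoint nonempty open intervals, so $T_A$ can have uncountable antichains. Passing to the nowhere-separable quotient does not help: that quotient is still an Aronszajn line, not a Suslin line, so MA gives you nothing there. Your dichotomy ``separable pieces versus Suslin quotient'' simply omits the Aronszajn case.

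The good news is that the antichain bound is entirely unnecessary. The paper's argument is direct: each level of $T_L$ consists of pairwise disjoint nonempty intervals of $L$, hence has cardinality at most $|L|<\mathfrak c$; since $T_L$ has no uncountable chains its height is at most $\omega_1$; and under MA the continuum is regular, so a union of $\omega_1$ many sets of size $<\mathfrak c$ has size $<\mathfrak c$. Thus $|T_L|<\mathfrak c$ immediately, and Baumgartner--Malitz--Reinhardt applies with no further work. Drop the antichain discussion and the proof goes through.
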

\begin{proof} Suppose that $L$ does not contain $\omega_1$ or $\omega^*_1$. Let $T_L$ be a partition tree of $L$. We claim that $T_L$ does not contain
uncountable chains. Suppose the contrary, let $\{t_\alpha:\alpha<\omega_1\}
\subset T_L$ be a chain. Then $N_{t_\alpha}$ (denoted by $N_\alpha$ later on) is a strictly decreasing sequence
of convex  sets in $L$. Therefore, for every $\alpha$ there exists an $x_\alpha \in
N_{\alpha} \setminus N_{\alpha+1}$ such that either $N_{\alpha+1}<_L \{x_\alpha\}$
or $N_{\alpha+1}>_L \{x_\alpha\}$. Without loss of generality we can suppose that the set
$R=\{\alpha:(\exists x_\alpha \in N_{\alpha} \setminus N_{\alpha+1})(N_{\alpha+1}<_L
\{x_\alpha\})\}$ is uncountable.
But then the sequence $(x_\alpha)_{\alpha \in R}$ is strictly decreasing in $L$ and $R$ is unbounded in $\omega_1$ so $(x_\alpha)_{\alpha \in R}$ is order isomorphic to
$\omega^*_1$.

Notice that as every level of $T_L$ contains pairwise disjoint nonempty convex sets of $L$, from $|L|<\mathfrak{c}$ it follows that the cardinality of every level is strictly less than $\mathfrak{c}$. Moreover, since $T_L$ does not contain uncountable chains, using that under Martin's Axiom $\mathfrak{c}$ is a regular cardinal we obtain that $|T_L|<\mathfrak{c}$. 

Now it is easy to prove the theorem using a result of Baumgartner, Malitz and
Reinhardt (see \cite{baumg}) which states that assuming Martin's Axiom every
tree with cardinality $<\mathfrak{c}$ that does not contain $\omega_1$-chains is
$\mathbb{Q}$-special. 
We have seen that $T_L$ does not contain uncountable chains and $|T_L|<\mathfrak{c}$, hence it is $\mathbb{Q}$-special (in particular $\R$-special), so by
Proposition \ref{p:rspec} we have $L \hookrightarrow \uni$. By the Main Theorem this
implies $L \hookrightarrow \mathcal{B}_1$. 
\end{proof}

\section{New results}
\label{s:new}

\subsection{Countable products and gluing}

In this section we will answer Questions 2.2, 2.5 and 3.10 from \cite{marci1}.
Concerning the last question we would like to point out that in fact it has been
already solved in \cite{marci2}.

Elekes \cite{marci1} investigated several operations on collections of
linearly ordered sets, and asked  whether the closure of a simple collection
of orderings under these operations coincide with the linearly ordered subsets
of $\mathcal{B}_1$. We will first prove that the set of linearly ordered subsets
of $\mc{B}_1$ is closed under the application of these operations. 

\begin{definition}
Let $L$ be a linearly ordered set and for every $p \in L$ fix a  linearly
ordered set $L_p$. 
Then the set $\{(p,q):p \in L, q \in L_p\}$ ordered lexicographically (that is, $(p,q)<_g(p',q')$ if
and only if $p<_{L}p'$ or $p=p'$ and $q<_{L_p}q'$) is called the \emph{gluing of the
$L_p$'s along $L$}. 
\end{definition}

\begin{theorem}
\label{t:productglue}
\begin{enumerate}[(1)]
\item Let $\{L_{\beta}:\beta<\alpha\}$ be a countable collection of linearly
ordered sets that are representable in $\mathcal{B}_1$. Then the set
$\prod_{\beta<\alpha} L_{\beta}$ ordered lexicographically is also
representable. 
\item Suppose that $L$ and every $(L_p)_{p \in L}$ is representable in
$\mc{B}_1$. Then the gluing of $L_p$'s along $L$ is also representable in
$\mc{B}_1$.
\end{enumerate}
\end{theorem}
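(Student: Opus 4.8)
The plan is to reduce everything to the combinatorial model $\uni$ via the Main Theorem, and to exploit the fact that $\uni$ is itself built out of transfinite sequences, so that concatenation of such sequences is the natural operation for both lexicographic products and gluings. First I would observe that part (2) is in fact more general than part (1): the lexicographic product $\prod_{\beta<\alpha}L_\beta$ over a \emph{countable} ordinal $\alpha$ can be obtained by iterating the gluing construction $\alpha$-many times (gluing copies of $L_{\beta+1}$-tails along the already-built product of the first $\beta$ factors), so it suffices to prove a gluing statement that is robust enough to be iterated countably often, and then handle the limit stages of that iteration by a direct argument. So I would actually prove a single lemma: if $L \hookrightarrow \uni$ and each $L_p \hookrightarrow \uni$ (via embeddings chosen uniformly, which costs nothing since there is only one target), then the gluing embeds into $\uni$; and separately that a lexicographic product of countably many factors each embedding into $\uni$ embeds into $\uni$, by splicing the coordinate sequences together.

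The key construction for the gluing: fix an embedding $\Phi:L\hookrightarrow\uni$ and, for each $p\in L$, an embedding $\Phi_p:L_p\hookrightarrow\uni$. Given $(p,q)$ in the gluing, I want to define its image as something like $\Phi(p)$ with the terminal $0$ stripped off, followed by a coded copy of $\Phi_p(q)$, followed by $0$. The subtlety — and this is where I expect the main obstacle — is parity and the alternating order. The string $\Phi(p)$ has some length $\eta_p$, so the appended block of $\Phi_p(q)$ starts at ordinal $\eta_p$, and whether $<_{altlex}$ reads that block "forwards" or "backwards" depends on the parity of $\eta_p$. If $\eta_p$ is even, a straight concatenation works; if $\eta_p$ is odd, I need to reverse the sense of $\Phi_p(q)$, which I can do by replacing each coordinate $x_\gamma$ of $\Phi_p(q)$ by $1-x_\gamma$ (this is an anti-automorphism of $[0,1]$) and noting that reversing all coordinates of an element of $\uni$ reverses $<_{altlex}$ — but one must be careful that $1-x_\gamma$ still forms a strictly decreasing sequence, which it does not. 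The fix is to rescale: map $\Phi_p(q)=(x_\gamma)_{\gamma\le\zeta}$ into a \emph{decreasing} sequence living in a small interval $(a,b)$ strictly between the last coordinate of $\Phi(p)$ and $0$, using an order-preserving or order-reversing affine map from $[0,1]$ into $(a,b)$ according to the required parity, followed by $0$. The conditions "strictly decreasing", "last element $0$", and "lands in $\uni$" are then routine interval bookkeeping; the content is purely the parity computation showing that for fixed $p$ the map $q\mapsto(\text{spliced image})$ respects $<_{altlex}$, and that for $p<_L p'$ the difference occurs already inside the $\Phi$-part (because the $\Phi(p)$ and $\Phi(p')$ blocks are distinct and the appended blocks lie strictly below their last coordinates), so the gluing order is respected.

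For part (1) with a countable product $\prod_{\beta<\alpha}L_\beta$: fix embeddings $\Phi_\beta:L_\beta\hookrightarrow\uni$ and, given $(q_\beta)_{\beta<\alpha}$, splice the sequences $\Phi_\beta(q_\beta)$ (each with its terminal $0$ deleted, except possibly at the very end) one after another, inserting rescaling as above so that the whole concatenation is strictly decreasing and the $\beta$-th block is read with the correct parity; append $0$ at the end. Since $\alpha$ is countable the total length stays below $\omega_1$, so the result is in $\uni$. Two distinct tuples first differ at some coordinate $\beta_0$; by construction all earlier blocks are identical and the $\beta_0$-block records $\Phi_{\beta_0}(q_{\beta_0})$ versus $\Phi_{\beta_0}(q'_{\beta_0})$ faithfully (up to the fixed parity correction), so $<_{altlex}$ on the spliced images agrees with the lexicographic order on the tuples. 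Finally, invoke the Main Theorem to transfer both conclusions from $\uni$ back to $\mathcal{B}_1(X)$. The genuinely delicate point throughout is keeping the parity of the starting ordinal of each appended block under control — one cannot choose these freely, so the rescaling-and-possibly-reversing trick has to be applied blockwise in lockstep with a running count of lengths mod $2$, and I would set this up carefully as an induction on $\beta$ (or on the levels of the gluing iteration) rather than trying to write a closed formula.
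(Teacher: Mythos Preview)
Your overall architecture is correct and matches the paper's: work in $\uni$, concatenate rescaled sequences, and invoke the Main Theorem. You also correctly identify the central obstacle, namely that the parity of the ordinal at which each appended block begins determines whether $<_{altlex}$ reads that block forwards or backwards.

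However, your proposed fix for the odd-parity case has a gap. You write that when the starting ordinal is odd you will apply an \emph{order-reversing} affine map from $[0,1]$ into the target interval $(a,b)$ coordinatewise. But applying an order-reversing map coordinatewise to a strictly decreasing sequence yields a strictly \emph{increasing} sequence, so the result is no longer in $\sigma^*[0,1]$ and the concatenation fails to lie in $\uni$. You flag exactly this problem for the map $x\mapsto 1-x$ and then say ``the fix is to rescale,'' but an affine rescaling into a smaller interval does not touch the monotonicity issue; the sequence is still increasing. Your ``running count of lengths mod $2$'' idea is also more delicate than you suggest for the product: the starting parity of block $\beta$ depends on the specific values $(p_\gamma)_{\gamma<\beta}$ through the lengths $l(\Phi_\gamma(p_\gamma))$, so the parity correction at block $\beta$ varies from tuple to tuple, and you would have to argue that this is harmless for the comparison (it is, since tuples agreeing below $\beta_0$ produce identical prefixes, but this needs to be said).

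The paper sidesteps all of this with a short preliminary lemma (Lemma \ref{l:evenlength}): any embedding $\Psi:L\hookrightarrow\uni$ can be modified to an embedding $\Psi'$ for which every $l(\Psi'(p))$ is \emph{even}, simply by rescaling $\Psi(p)$ into $[\tfrac12,1]$ and appending either $0$ or $\tfrac14\concat 0$ according to the parity of $l(\Psi(p))$. Once every block has even length, every block in every concatenation starts at an even ordinal, the parity issue disappears entirely, and the splicing maps for both (1) and (2) can be written down by a single explicit formula with no case analysis and no running parity count. I recommend replacing your blockwise reversal scheme with this even-length normalization.
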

{\sc Notation.} Throughout this section if $\bar{x}=(x_\alpha)_{\alpha \leq
\xi}$ is a transfinite sequence of reals and $a,b \in \R$ we will abbreviate the
sequence $(ax_\alpha+b)_{\alpha \leq \xi}$ by $a\bar{x}+b$. 

First we need a technical lemma.
\begin{lemma}
\label{l:evenlength}
Suppose that $L$ is a linearly ordered set and there exists an embedding $\Psi:L
\hookrightarrow [0,1]^{<\omega_1}_{\searrow 0}$. Then there exists an embedding $\Psi':L
\hookrightarrow [0,1]^{<\omega_1}_{\searrow 0}$ such that for every $p \in L$ the length
$l(\Psi'(p))$ is an even ordinal. 
\end{lemma}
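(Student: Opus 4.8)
The idea is to post-compose $\Psi$ with a coordinate-by-coordinate modification that keeps the alternating-lexicographic order intact but lengthens every sequence by exactly one step at the end, turning odd lengths into even ones (and leaving even lengths even, after a uniform harmless shift). First I would observe that it suffices to produce a single order-preserving map $\Psi': L \hookrightarrow \uni$ for which $l(\Psi'(p))$ is always even; there is no requirement of any compatibility with $\Psi$ beyond order-preservation, so I am free to rescale into a subinterval of $[0,1]$ and splice on extra terms. The natural move: fix once and for all an order-preserving homeomorphism of $[0,1]$ onto $[\tfrac14,\tfrac34]$, apply it to every coordinate (this is order-preserving coordinatewise, hence $<_{altlex}$-preserving, by the same argument as in the proof of Theorem \ref{t:emb}), and then append two new coordinates, say $\tfrac18$ and then $0$, to every sequence whose current length is odd, and one new coordinate $0$ adjusted suitably to those whose length is already even — but one must be careful that the last element remains $0$ and the sequence stays strictly decreasing, which is exactly why rescaling into the interior of $[0,1]$ first is convenient: the value $\tfrac18$ lies strictly below every rescaled coordinate and strictly above the terminal $0$.

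More precisely, let $h:[0,1]\to[\tfrac14,\tfrac34]$ be an increasing homeomorphism and for $\bar x=(x_\alpha)_{\alpha\le\xi}\in\uni$ write $h(\bar x)=(h(x_\alpha))_{\alpha<\xi}$ (dropping the last coordinate, which was $0$ and is now $h(0)=\tfrac14$, so I instead keep all coordinates and then re-append a true $0$). So set $\bar y = h(\bar x)\concat 0$, a strictly decreasing sequence of length $\xi+1$ ending in $0$ whose interior values all lie in $[\tfrac14,\tfrac34]$. Now the parity of $\xi+1$ is the opposite of that of $\xi$; to get a uniform even length regardless, I instead define $\Psi'(\bar x)$ by appending to $h(\bar x)$ (all coordinates, the last now $\tfrac14$) either the single term $0$, if that makes the length even, or the two terms $\tfrac18, 0$, if instead $0$ alone would leave it odd. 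In either case the appended terms are $<\tfrac14$ and strictly decreasing, so $\Psi'(\bar x)\in\uni$, and $l(\Psi'(\bar x))$ is even by construction.

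It remains to check that $\Psi'$ is order-preserving, i.e. that $\bar x<_{altlex}\bar x'$ implies $\Psi(h(\bar x)\concat\text{tail})\ <_{altlex}\ \Psi(h(\bar x')\concat\text{tail}')$. The key point is that $\delta(\Psi'(\bar x),\Psi'(\bar x'))$ either equals $\delta(\bar x,\bar x')$ — when the two original sequences first differ at a coordinate both possess — in which case $h$ being increasing preserves the even/odd comparison verbatim, or else one of $\bar x,\bar x'$ is a proper initial segment of the other, say $\bar x\subsetneq\bar x'$ with $l(\bar x)=\eta$; then $\bar x<_{altlex}\bar x'$ is governed by the parity of $\eta$ together with $0=x_\eta$ versus $x'_\eta>0$ (an even $\eta$ gives $\bar x<_{altlex}\bar x'$). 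In the images, $\Psi'(\bar x)$ and $\Psi'(\bar x')$ first differ at coordinate $\eta$ as well, where $\Psi'(\bar x)$ has the value $\tfrac14$ (if a coordinate remains there) or a tail value $\in\{\tfrac18,0\}$, while $\Psi'(\bar x')$ has $h(x'_\eta)\in[\tfrac14,\tfrac34]$ or possibly its own appended $\tfrac14$; one checks in each of the finitely many subcases — using $\tfrac14\le h(x'_\eta)$ with equality only when $x'_\eta$ was itself a terminal $0$, and that the inserted $\tfrac18$ is strictly between $\tfrac14$ and $0$ — that the parity-weighted comparison at coordinate $\eta$ comes out on the same side as in the original. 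The main obstacle is precisely this bookkeeping: making sure that the freshly inserted coordinates never accidentally create a tie (so that $\delta$ lands where I want) and never flip the sign; rescaling into $[\tfrac14,\tfrac34]$ and inserting the separating value $\tfrac18$ is exactly what buys the needed strict inequalities, so once the subcases are laid out the verification is routine.
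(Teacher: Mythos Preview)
Your construction is essentially the same as the paper's: rescale $\Psi(p)$ affinely into a proper subinterval of $[0,1]$ bounded away from $0$, then append either $0$ or an intermediate value followed by $0$, according to the parity of $l(\Psi(p))$. The paper uses the map $x\mapsto \tfrac12 x+\tfrac12$ and appends $0$ (odd case) or $\tfrac14\concat 0$ (even case), but your choice of constants works just as well.

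Where you go astray is in the verification. You spend most of your effort on the case ``one of $\bar x,\bar x'$ is a proper initial segment of the other,'' but this case \emph{never occurs} in $\uni$: if $\bar x\subsetneq\bar x'$ with $l(\bar x)=\eta+1$, then $x_\eta=0$, so $x'_\eta=0$ as well, and then $x'_{\eta+1}<0$ is impossible since all values lie in $[0,1]$. Once you note this, the verification becomes a one-liner: for distinct $\bar x,\bar x'\in\uni$ the index $\delta=\delta(\bar x,\bar x')$ lies in the domain of both sequences, so after rescaling both $\Psi'$-images still have values $h(x_\delta)\ne h(x'_\delta)$ at coordinate $\delta$ (and agree below $\delta$), while the appended tails live strictly below the range of $h$ and hence cannot interfere. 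Thus $\delta(\Psi'(p),\Psi'(q))=\delta$ and the comparison at $\delta$ goes the same way as before because $h$ is increasing. No subcase analysis is needed.

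Incidentally, your index bookkeeping in the initial-segment discussion is off by one (if $l(\bar x)=\eta$ then $x_\eta$ is undefined; the terminal $0$ sits at index $\eta-1$), which is another symptom of not having noticed that the case is vacuous.
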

\begin{proof}
It is easy to see that 
\[\Psi'(p)=\begin{cases}
                 ( \frac{1}{2} \Psi(p)+\frac{1}{2})\concat 0 & \text{if }
l(\Psi(p)) \text{ is odd}\\
                 ( \frac{1}{2} \Psi(p)+\frac{1}{2})\concat \frac{1}{4}\concat 0
& \text{if } 
                 l(\Psi(p)) \text{ is even}\\
                 \end{cases}
\]
is also order preserving and takes every point $p \in L$ to a sequence with even
length.
\end{proof}
\begin{proof}[Proof of Theorem \ref{t:productglue}]
 
 First we prove $(1)$. The representability of $L_\beta$ for every
$\beta<\alpha$ by the Main Theorem imply that there exist embeddings
$\Psi_\beta: L_\beta \hookrightarrow \uni$. Using Lemma \ref{l:evenlength} we
can suppose that for every $\beta<\alpha$ and $p \in L_\beta$ the length of
$\Psi_\beta(p)$ is even. 
 
 Fix now a sequence $(y_\beta)_{\beta \leq \alpha} \in \sigma^*[\frac{1}{2},1]$.
For $\bar{p}=(p_\beta)_{\beta<\alpha}  \in \prod_{\beta<\alpha} L_\beta$ let
 \[\Psi(\f{p})=(\concatt_{\beta<\alpha}
(\frac{y_{\beta}-y_{\beta+1}}{2}\Psi_\beta(p_\beta)+y_{\beta+1})) \concat 0,\]
where $\concatt_{\beta<\alpha}$ denotes concatenation of the sequences in type
$\alpha$.

We claim that $\Psi$ is an embedding of $(\prod_{\beta<\alpha}
L_\beta,<_{lex})$ into $\un$. 
 It is easy to see that for every $\f{p} \in \prod_{\beta<\alpha} L_\beta$ we have $\Psi(\f{p}) \in
\uni$.
 
Now we prove that $\Psi$ is order preserving. Let $\bar{p} <_{lex} \f{q}$ with
$\bar{p}=(p_\beta)_{\beta<\alpha}$, $\bar{q}=(q_\beta)_{\beta<\alpha}$ and let 
$\delta=\delta(\f{p},\f{q})$, then $p_\delta<_{L_\delta} q_\delta$. It is easy to see that 
\[
 \delta(\Psi(\f{p}),\Psi(\f{q}))=\sum_{\beta<\delta} l(\Psi_\beta(p_\beta))+\delta(\Psi_\delta(p_\delta),\Psi_\delta(q_\delta)).
\]
In particular, since every length in the previous equation is even we get that
the $\delta(\Psi(\f{p}),\Psi(\f{q}))$ and $\delta(\Psi_\delta(p_\delta),\Psi_\delta(q_\delta))$ are of the same parity. Using this, $p_\delta<_{L_\delta} q_\delta$ and the fact that $\Psi_\delta$ is order preserving, we obtain that $\Psi(\bar{p})<_{altlex}\Psi(\bar{q})$, which finishes the proof of (1).

(2) can be proved similarly. Fix an order preserving
embedding $\Psi_0:L \hookrightarrow \uni$ such that for every $p \in L$ we have
that $l(\Psi(p))$ is even. For every $p \in L$ let us also fix embeddings
$\Psi_p: L_p \hookrightarrow \uni$. Then 
\[\Psi(p,q)=(\frac{1}{2}(\Psi_0(p))+\frac{1}{2}) \concat
(\frac{1}{8}(\Psi_p(q))+\frac{1}{4}) \concat 0\]
works.
 \end{proof}

\begin{definition}
 Let $L$ be a linearly ordered set. The set $L
\times 2$ ordered lexicographically is called the \emph{duplication of $L$}. 
\end{definition}
\begin{corollary}
\label{c:duplication}
 A linearly ordered set is representable in $\mc{B}_1$ then its duplication is
also representable.
\end{corollary}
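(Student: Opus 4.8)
The plan is to derive this as an immediate special case of Theorem \ref{t:productglue}. Observe that the duplication $L \times 2$ ordered lexicographically is exactly the lexicographic product $\prod_{\beta < 2} L_\beta$ with $L_0 = L$ and $L_1 = 2$, and equivalently the gluing of the constant family $(L_p)_{p \in L}$ with $L_p = 2$ along $L$. So once we know that $L$ is representable (by hypothesis) and that the two-element linear order $2 = \{0,1\}$ is representable, both parts of Theorem \ref{t:productglue} apply and give the conclusion.

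First I would check that $2$ is representable in $\mc{B}_1$. This is trivial: $2$ embeds into $\R$ via the identity, and by the Main Theorem (or directly, by constant functions) every real order type is representable; alternatively one writes down an explicit embedding into $\uni$, e.g. $0 \mapsto (1,0)$ and $1 \mapsto (0)$. Then I would apply Theorem \ref{t:productglue}(1) to the finite — hence countable — collection $\{L_0, L_1\} = \{L, 2\}$ to conclude that $\prod_{\beta<2} L_\beta = L \times 2$ ordered lexicographically is representable; or, symmetrically, apply Theorem \ref{t:productglue}(2) with $L_p = 2$ for every $p \in L$, noting that the gluing of the $L_p$'s along $L$ is by definition $L \times 2$ with the lexicographic order.

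There is no real obstacle here; the only points to verify are the (immediate) representability of $2$ and the (purely definitional) identification of the duplication of $L$ with the relevant product/gluing construction appearing in Theorem \ref{t:productglue}. Consequently the entire argument is a one-line invocation of the preceding theorem.
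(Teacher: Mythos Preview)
Your proposal is correct and is exactly the paper's intended argument: the corollary is stated without proof immediately after Theorem \ref{t:productglue}, as a direct specialization with $L_0=L$ and $L_1=2$. One inessential slip: in your explicit embedding of $2$ into $\uni$ you have the order backwards, since at $\delta=0$ (even) we get $(0)<_{altlex}(1,0)$; simply swap the images (or use constant functions) and nothing else changes.
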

The first part of Theorem \ref{t:productglue} answers Question 2.5, while
Corollary \ref{c:duplication} answers Question 2.2 from \cite{marci1}
affirmatively. 

Now let us define the above mentioned operations on collections of linearly ordered
sets. Suppose that $\mathcal{H}$ is an arbitrary set of ordered sets. 
\begin{definition}
Let $\alpha<\omega_1$ be an ordinal, then \[\mc{H}^{\alpha}=\{L_1 \subset
L^{\alpha}:L \in \mathcal{H}\},\]
where $L^\alpha$ is ordered lexicographically.
Let us denote by $\mathcal{H}^*$ the closure of $\mc{H}$ under the operation
$\mc{H} \mapsto \mc{H}^{\alpha}$ for every $\alpha<\omega_1$.
\end{definition}

\begin{definition}
$\mathcal{S}(\mc{H})$ denotes the closure of $\mathcal{H}$ under gluing.
 
\end{definition}
It can be shown that such $\mc{H}^*$ and $\mc{S}(\mc{H})$ exist.

Suppose that every element of $\mathcal{H}$ is representable in $\mc{B}_1$. The
first part of Theorem \ref{t:productglue} clearly implies that every element of 
$\mathcal{H}^*$, while the second part yields that every element of
$\mc{S}(\mc{H})$ is representable in $\mc{B}_1$. So it is natural to ask the
following:

\begin{question} (Elekes, \cite[Question 3.10.]{marci1}) \label{q:marci1}Does
$S(\{[0,1]^\alpha:\alpha<\omega_1\})^\omega$ or $S(\{[0,1]^\alpha:\alpha<\omega_1\})^*$
equal to the linearly ordered sets representable in $\mc{B}_1$?
\end{question}
To answer this question we need a property that is invariant under the above operations.
\begin{definition}
 We will say that a linearly ordered set $L$ has property (*) if every uncountable subset of $L$ contains an uncountable subset order-isomorphic to a subset of $\R$. 
\end{definition}

\begin{proposition}
\label{p:propstar}
Suppose that every $L \in \mc{H}$ has property (*). Then (*) holds for every
element of $\mc{H}^*$ and $\mc{S}(\mc{H})$  as well.
\end{proposition}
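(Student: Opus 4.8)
The plan is to show that the class $\mc{D}$ of all linear orders with property (*) contains $\mc{H}$ and is closed under the three operations that generate $\mc{H}^*$ and $\mc{S}(\mc{H})$: passing to suborders, gluing, and forming lexicographic powers $L\mapsto L^\alpha$ for $\alpha<\omega_1$. Closure under suborders is immediate, since an uncountable subset of a subset of $L$ is an uncountable subset of $L$. Since every member of $\mc{H}^*$ is a suborder of some lexicographic power of a member of $\mc{H}$, and every member of $\mc{S}(\mc{H})$ is obtained from $\mc{H}$ by iterated gluing, the proposition follows once gluing and lexicographic powers are dealt with.

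First I would prove the \textbf{gluing lemma}: if $L\in\mc{D}$ and $L_p\in\mc{D}$ for every $p\in L$, then the gluing $G$ of the $L_p$'s along $L$ lies in $\mc{D}$. Given an uncountable $A\subseteq G$, let $A'=\{p:(\exists q)\,(p,q)\in A\}$ be its projection to $L$. If $A'$ is uncountable, apply property (*) of $L$ to get an uncountable $B'\subseteq A'$ order-isomorphic to a subset of $\R$, choose one point $(p,q_p)\in A$ for each $p\in B'$, and observe that $\{(p,q_p):p\in B'\}$ is order-isomorphic to $B'$, because distinct points of this set are compared via their first coordinates. If $A'$ is countable, then $A$ is a countable union of the ``columns'' $\{p\}\times(A\cap L_p)$, so some column $\{p_0\}\times Q_{p_0}$ is uncountable; on a single column the order is just that of $L_{p_0}$, so property (*) of $L_{p_0}$ produces an uncountable subset order-isomorphic to a subset of $\R$. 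Iterating the lemma finitely often shows that \emph{finite} lexicographic products of members of $\mc{D}$ lie in $\mc{D}$.

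Next comes the key step: \textbf{countable lexicographic products}. Let $M_0,M_1,\dots\in\mc{D}$ and let $A\subseteq\prod_{n<\omega}M_n$ (lex) be uncountable; write $A{\upharpoonright}n=\{f{\upharpoonright}n:f\in A\}\subseteq\prod_{i<n}M_i$. If some $A{\upharpoonright}n$ is uncountable, then, as $\prod_{i<n}M_i\in\mc{D}$ by the previous paragraph, it has an uncountable subset order-isomorphic to a subset of $\R$, and this lifts back to an uncountable subset of $A$ exactly as in the gluing lemma. Otherwise every $A{\upharpoonright}n$ is countable, so the tree $T=\bigcup_n A{\upharpoonright}n$ is countable, each $f\in A$ is determined by its branch through $T$, and each node of $T$ has only countably many immediate successors; after relabelling, for each node $t$, its (countable, hence $\Q$-embeddable) successor linear order by rationals, we obtain an order-embedding of $A$ into $\Q^\omega$ with the lexicographic order. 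Since the eventually-zero sequences form a countable subset of $\Q^\omega$ meeting every nonempty open interval and $\Q^\omega$ has no jumps, sending each element to the corresponding Dedekind cut of that countable set embeds $\Q^\omega$ (lex) into $\R$; hence $A$ itself is order-isomorphic to a subset of $\R$, and $\prod_{n<\omega}M_n\in\mc{D}$. Finally I would treat $L^\alpha$ for $L\in\mc{D}$ by transfinite induction on $\alpha<\omega_1$: for $\alpha=\beta+1$, $L^\alpha$ is the gluing of copies of $L$ along $L^\beta$, so the gluing lemma and the inductive hypothesis apply; for $\alpha$ a limit, fix $\lambda_n\uparrow\alpha$ with $\lambda_0=0$ and note that $L^\alpha$ is order-isomorphic to the countable lexicographic product $\prod_n L^{\mu_n}$, where $\mu_n$ is the order type of $[\lambda_n,\lambda_{n+1})$ and $1\le\mu_n\le\lambda_{n+1}<\alpha$, so each $L^{\mu_n}\in\mc{D}$ by induction and $\prod_n L^{\mu_n}\in\mc{D}$ by the countable-product step. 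This shows $\mc{D}$ contains $\mc{H}$ and is closed under suborders, gluing and lexicographic powers, hence contains $\mc{H}^*$ and $\mc{S}(\mc{H})$.

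The main obstacle is the countable-product step, and inside it the case where \emph{every} finite restriction of $A$ is countable: one has to realise that $A$, viewed as a set of branches of a countable tree of height exactly $\omega$ with linearly ordered successor sets, order-embeds into $\Q^\omega$ and therefore into $\R$. This is precisely where the height being $\omega$ (rather than a longer ordinal) is used, and it is why the limit case of the power induction must be rerouted through the reorganisation $L^\alpha\cong\prod_n L^{\mu_n}$ with $\mu_n<\alpha$ instead of being attempted directly; a naive ``reduce to a shorter power'' fails because proper final segments of a countable limit ordinal need not have smaller order type.
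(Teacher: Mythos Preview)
Your proof is correct. The organisation differs from the paper's, though the underlying idea in the hardest case is the same.

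The paper argues directly by induction on $\alpha$ that $L^\alpha$ has property~(*) whenever $L$ does. Given an uncountable $L_1\subset L^\alpha$, it first checks whether some projection $\{\bar p\in L^\beta:\exists\bar q\,(\bar p\concat\bar q\in L_1)\}$ with $\beta<\alpha$ is uncountable (and then lifts a real suborder back up, exactly as in your gluing lemma). If not, and $\alpha$ is a successor, the argument is your column argument. If not and $\alpha$ is limit, the paper uses the countable projections to build a countable set $D\subset L_1$ that is dense in the order topology of $L_1$, and quotes the standard fact that a separable linear order is a real order type. Your route to the same endpoint is different: you first isolate a countable-lexicographic-product lemma (for possibly distinct factors), prove it by embedding the ``all finite restrictions countable'' case into $\mathbb Q^\omega$ and then into $\R$, and only afterwards reduce the limit case of $L^\alpha$ to this lemma via the block decomposition $L^\alpha\cong\prod_n L^{\mu_n}$ with $\mu_n<\alpha$. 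Both approaches ultimately exploit separability; the paper does so in one step inside the induction, while you factor it out as a standalone lemma at the cost of an extra reduction. Your version proves a little more along the way (closure under countable products of \emph{different} orders, not just powers of a single $L$), which is pleasant but not needed for the proposition as stated.
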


\begin{proof}

In order to prove that every element of $\mc{H}^*$ has the required property it is enough
to prove that if $\alpha<\omega_1$ and $L$ has property (*) then so does
$L^\alpha$.

We prove this by induction on $\alpha$. Suppose that we are done for every
$\beta<\alpha$ and let $L_1 \subset L^\alpha$ be uncountable.

Observe that if there exists an ordinal $\beta <\alpha $ such that $L_2=\{\bar{p}
\in L^\beta:(\exists \bar{q})(\bar{p}\concat\bar{q}\in L_1)\} $ is uncountable then
using that $L_2 \subset L^\beta$ and the inductive hypothesis we obtain that
$L_2$ contains an uncountable real order type $R_2$. Thus, there exists an $R_1
\subset L_1$ such that for every $\bar{p}\in R_2$ there exists a unique $\bar{q}_{\bar{p}}$
such that $R_1=\{(\bar{p},\bar{q}_{\bar{p}}):\bar{p} \in R_2\}$. It is easy to see that since $L^\alpha$ is
ordered lexicographically we have that $R_1$ is an uncountable real order type
in $L_1$ (in fact it is isomorphic to $R_2$).

So we can suppose  that there is no such a $\beta$.

If $\alpha$ is a successor then using the above observation for $\beta=\alpha-1$ we
obtain that the set $\{\bar{p} \in L^{\alpha-1}:(\exists q \in L)(\bar{p}\concat q\in
L_1)\}$ is countable. By the uncountability of $L_1$ there exists a $\bar{p} \in
L^{\alpha-1}$ such that the set $\{q:\bar{p}\concat q \in L_1\}$ is uncountable. But
this is a subset of $L$, so by the assumption on $L$ there exists an
uncountable real order type  $R \subset \{q:\bar{p}\concat q \in L_1\}$. Then
$\{\bar{p}\concat q:q \in R\}$ is an uncountable real order type in $L_1$.

Suppose now that $\alpha$ is a limit ordinal. By the above observation for every
$\beta<\alpha$ the set $\{\bar{p} \in L^\beta:(\exists
\bar{q})(\bar{p}\concat\bar{q}\in L_1)\}$ is countable. So there exist countable
sets $D_\beta \subset L_1$ with the following property: whenever for a point
$\bar{p} \in L^\beta$ there exists a $\bar{q}$ such that $\bar{p}\concat\bar{q} \in
L_1$ then there exists a $\bar{q}'$ such that $\bar{p}\concat\bar{q}'\in D_\beta$.
Let $D=\bigcup_{\beta<\alpha} D_\beta$, then $D$ is a countable set.

We claim that $D$ is dense in $L_1$ (equipped with the order topology). In order to prove this
let $\f{x},\f{y} \in L_1$ such that $(\f{x},\f{y}) \cap L_1$ is nonempty. Choose a
$\f{z} \in (\f{x},\f{y}) \cap L_1$. Since $\alpha$ is a limit there exists a
$\beta<\alpha$ such that $\beta>\max\{\delta(\f{x},\f{z}),\delta(\f{y}, \f{z})\}$.
Then there exists a $\f{w} \in D_\beta \subset D$ such that
$\f{w}|_\beta=\f{z}|_\beta$. But then clearly $\f{w} \in (\f{x},\f{y}) \cap L_1
\cap D$. So $D$ is indeed dense.
Consequently, $L_1$ contains an uncountable real order type (see \cite[3.2.
Corollary]{tod}). This proves that $L^\alpha$ has property (*), so it is true
for every element of $\mc{H}^*$.

In order to prove that every element of $\mc{S}(\mc{H})$ has property (*) one can use
similar ideas: just use the above observation and the same argument as in the case of
successor $\alpha$.\end{proof}
 
Now we are ready to answer Question \ref{q:marci1}. 
 An \emph{Aronszajn line} is an uncountable linearly ordered set that does not contain $\omega_1$, $\omega^*_1$ and uncountable sets isomorphic to a subset of $\R$. An Aronszajn line is called \textit{special} if it has an $\R$-special partition tree. Special Aronszajn lines exist, see \cite[Theorem 5.1, 5.2]{tod}.
Notice that Proposition
\ref{p:rspec} immediately gives the following important corollary:
\begin{corollary}
If $A$ is a special Aronszajn line then $A \hookrightarrow \mathcal{B}_1$.
\end{corollary}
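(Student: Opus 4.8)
The plan is to read the corollary straight off Proposition \ref{p:rspec}. By definition, a special Aronszajn line is an Aronszajn line $A$ that admits an $\R$-special partition tree $T_A$; and Proposition \ref{p:rspec} asserts precisely that any linear ordering one of whose partition trees is $\R$-special embeds into $\mathcal{B}_1$. So I would simply apply Proposition \ref{p:rspec} with $L = A$ and with the partition tree $T_L = T_A$ that witnesses specialness, obtaining $A \hookrightarrow \mathcal{B}_1$. No separate argument is needed.

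There is genuinely no obstacle at the level of the corollary itself: all the work has already been done in the proof of Proposition \ref{p:rspec}, where one first constructs, by induction on the levels of $T_A$, a map $\Psi_0 : T_A \to \sigma^*[0,1]$ satisfying conditions (1)--(3) of Lemma \ref{l:phidef}, and then assembles the embedding $\Psi(x) = \bigl(\bigcup_{t \in T_A,\ x \in N_t} \Psi_0(t)\bigr) \concat 0$ of $A$ into $\uni$, after which the Main Theorem transports everything into $\mathcal{B}_1$. The one point worth stressing in the write-up is that the statement is not vacuous: special Aronszajn lines exist in ZFC by \cite[Theorem 5.1, 5.2]{tod}. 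It is also worth recording the conceptual upshot, namely that such an $A$ contains neither $\omega_1$ nor $\omega^*_1$ nor an uncountable real suborder and yet is representable, so Kuratowski-type obstructions are far from sufficient and the combinatorial order $\uni$ is doing something genuinely nontrivial.
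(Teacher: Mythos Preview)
Your proposal is correct and matches the paper's approach exactly: the paper presents this corollary as an immediate consequence of Proposition \ref{p:rspec} without a separate proof, just as you do. Your additional remarks on the existence of special Aronszajn lines and the conceptual significance are appropriate context but not part of the proof itself.
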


This corollary was proved by Elekes and Stepr\=ans. Although it is not mentioned
explicitly in the Elekes-Stepr\=ans paper, the embeddability of the Aronszajn line
answers the questions of Elekes negatively: on the one hand an Aronszajn line
does not contain uncountable real order types. On the other hand by Proposition
\ref{p:propstar} every element of every collection of linear orderings
obtainable from $\{[0,1]\}$ by the operations $\mathcal{H} \mapsto \mc{H}^*$ or
$\mathcal{H} \mapsto \mathcal{S}(\mathcal{H})$ has property (*).  
\subsection{Completion}
Now we will answer Question 2.7 from \cite{marci1} negatively.
\begin{theorem}
\label{t:complete}
There exists a linearly ordered set such that it is representable in
$\mathcal{B}_1$, but none of its completions are representable.
\end{theorem}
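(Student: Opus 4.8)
The plan is to produce a single linearly ordered set $L$ with $L\hookrightarrow\mathcal{B}_1$ but with $\hat L\not\hookrightarrow\mathcal{B}_1$, where $\hat L$ is the Dedekind--MacNeille completion of $L$; this suffices, since representability passes to suborders and every completion of $L$ contains $\hat L$ as a suborder, so once $\hat L$ is not representable neither is any completion of $L$. By the Main Theorem (Theorem \ref{t:main}) the task becomes: find $L$ with $L\hookrightarrow\uni$ and $\hat L\not\hookrightarrow\uni$. It should be stressed first that $\hat L$ cannot fail to be representable ``for a classical reason'': a jump of $\hat L$ is already a jump of $L$, and between any two non-adjacent points of $\hat L$ there is a point of $L$, so an $\omega_1$- or $\omega_1^*$-chain in $\hat L$ pulls back by interpolation to one in $L$, contradicting Kuratowski's theorem (Theorem \ref{t:kuratowski}). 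Thus the non-representability of $\hat L$ must be witnessed by a finer obstruction, and we genuinely have to exploit the characterization of Theorem \ref{t:main} rather than the earlier, coarser results.

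The strategy is to make a known non-representable order appear inside the completion of a representable one. Fix a linearly ordered set $E$ with $E\not\hookrightarrow\uni$ — for instance the Elekes--Stepr\=ans order from \cite{marci2}; by Theorem \ref{t:main} any non-representable $E$ works. Since $E\not\hookrightarrow\R$, the order $E$ is not separable, so $L$ cannot simply be a countable dense subset of the completion of $E$. Instead, one builds by transfinite recursion a linearly ordered set $L$ which (i) carries an $\R$-special partition tree, so that $L\hookrightarrow\uni$ by Proposition \ref{p:rspec}, and (ii) sits densely in a fixed complete order in such a way that its set of gaps, with the inherited ordering, contains a copy of $E$. Concretely, one starts from the embedding of $E$ into its completion, inserts around the points of $E$ a ``special'' skeleton $L$, chosen so that each point of $E$ becomes a genuine two-sided gap of $L$, and checks that the Dedekind--MacNeille completion of $L$ recovers all of $E$. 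Then $E\hookrightarrow\hat L$, hence $\hat L\not\hookrightarrow\uni$ by Theorem \ref{t:main}, while $L\hookrightarrow\uni$ by construction; so $L$ is representable but none of its completions are.

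The main obstacle is the simultaneous construction of $L$: it must be ``thin'' enough to admit an $\R$-special partition tree — so that the argument of Proposition \ref{p:rspec}, with coordinate choices of the kind made there and in the proof of Theorem \ref{t:kuratowski}, embeds it into $\uni$ — yet ``dense'' enough in the ambient complete order that its completion re-creates every point of $E$ as a gap. Reconciling these two demands requires careful recursive bookkeeping on the levels of the partition tree: at successor steps one picks the newly added real coordinates so as to keep the partition tree $\R$-special, while at the same time reserving, on both sides of every prospective point of $E$, an order-dense set of approximating elements of $L$ having that point of $E$ as their unique common limit. Once $L$ has been built, the verification $L\hookrightarrow\uni$ is immediate from Proposition \ref{p:rspec}, and the non-representability of every completion of $L$ follows from $E\subseteq\hat L$ together with the Main Theorem.
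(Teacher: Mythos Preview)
Your proposal has a genuine gap: the central construction is not carried out. You outline a strategy---build $L$ with an $\R$-special partition tree such that a fixed non-representable $E$ (taken from \cite{marci2}) sits inside $\hat L$---but the construction itself is replaced by the phrase ``careful recursive bookkeeping on the levels of the partition tree.'' You explicitly acknowledge that the two requirements on $L$ pull against each other (thin enough to be $\R$-special, dense enough to recreate $E$ in the completion), yet you give no argument that they can be reconciled. Concretely: your partition tree must eventually separate, for each of the uncountably many points $e\in E$, the cofinal approximants to $e$ from both sides; doing this while keeping the tree $\R$-special is precisely the hard part, and nothing in your sketch explains why it is possible. As written, the proof reduces to an assertion that such an $L$ exists. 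Note also a logical dependency issue: your argument consumes the Elekes--Stepr\=ans order from \cite{marci2} as a black box, so even if completed it would not be self-contained within the paper's framework.

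The paper's proof is completely different and sidesteps this difficulty. The witness is simply $L=\uni$ itself---no auxiliary construction needed, since $\uni$ is trivially representable. Supposing some completion $L\supset\uni$ were representable, Corollary \ref{c:duplication} gives an embedding $\Psi:L\times 2\hookrightarrow\uni$. Using the completeness of $L$, for each $s\in\sigma^*[0,1]$ one sets $I(s)=\Psi(\inf J_s)$, $S(s)=\Psi(\sup J_s)$ (where $J_s=\{\bar x:s\subset\bar x\}\times 2$) and $\Phi(s)=\max\{I(s)(\delta_s),S(s)(\delta_s)\}$ with $\delta_s=\delta(I(s),S(s))$. A direct transfinite induction (Lemma \ref{l:compmain}) then builds a $\subsetneqq$-increasing chain $(s_\alpha)_{\alpha<\omega_1}$ in $\sigma^*[0,1]$ with $l(s_\alpha)=\alpha$ and $\Phi(s_\alpha)<\min s_\alpha$; the induction step splits into cases according to the parities of $\beta$ and $\delta_{s_\beta}$ and in each case one finds a suitable $x$ with $\Phi(s_\beta\concat x)<x$. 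The union $\bigcup_\alpha s_\alpha$ is then an $\omega_1$-long strictly decreasing sequence of reals, a contradiction. This argument uses only the Main Theorem and the duplication result, and---as the Remark after the proof notes---it yields a concrete non-representable order (a specific completion of $\uni$) as output, thereby \emph{reproving} the Elekes--Stepr\=ans theorem rather than relying on it.
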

\begin{proof}
Let $L \supset [0,1]^{<\omega_1}_{\searrow 0}$ be a completion of $[0,1]^{<\omega_1}_{\searrow 0}$, that is, a complete linear order containing $[0,1]^{<\omega_1}_{\searrow 0}$ as a dense subset. If
it was representable then by Corollary \ref{c:duplication} there would be an order
preserving embedding $\Psi:L \times 2 \hookrightarrow [0,1]^{<\omega_1}_{\searrow 0}$. We will denote the lexicographical ordering on $L \times 2$ by $<_{L \times 2}$ and somewhat ambiguously the lexicographical ordering on $[0,1]^{<\omega_1}_{\searrow 0} \times 2$ by $<_{altlex \times 2}$. Notice that $<_{altlex \times 2}$ is the restriction of $<_{L \times 2}$ to $[0,1]^{<\omega_1}_{\searrow 0} \times 2$.

{\sc Notation.} For each $s \in \sigma^*[0,1]$ let $J_s$ be the set $\{\bar{x} \in
[0,1]^{<\omega_1}_{\searrow 0}: s \subset \bar{x}\} \times 2$. We
will use the notation 
\begin{equation}
\label{e:compsdef}
I(s)=\Psi(\inf(J_s)) \text{ and }S(s)=\Psi(\sup(J_s)).
\end{equation}
Notice that if $L$ is complete then the set $L \times 2$ ordered lexicographically is also a complete linearly
ordered set, hence $I(s)$ and $S(s)$ exist for every $s \in \sigma^*[0,1]$.

Let us define a map $\Phi:\sigma^* [0,1] \to [0,1]$ as follows: 

\begin{definition}

 For $s \in \sigma^*[0,1]$ let \[\delta_s=\delta(I(s),S(s))\]
 and 
\[\Phi(s)=\max\{I(s)(\delta_s),S(s)(\delta_s)\}.\]
\end{definition}

Let us also use the notation \[\phi(s)=\min\{I(s)(\delta_s),S(s)(\delta_s)\}.\]
Notice that $\Phi$ and $\phi$ are well defined, since for every $s \in
\sigma^* [0,1]$ the set $J_s$ contains at least two elements (one with last element $0$
and another with $1$), so $I(s)$ and $S(s)$ must differ. From this we have for all $s$ that
\begin{equation}
\label{e:defphitriv}
0\leq \phi(s)<\Phi(s).
\end{equation}
In the following lemma we collect the easy observations that will be needed in the proof of the theorem.
\begin{lemma}
 \label{l:obs}
 Let $s,t,u \in \sigma^*[0,1]$ with $s \subset t$. Then 
 \begin{enumerate}[(1)]
 \item $\delta_s \leq \delta_t$,
 \item 
 \begin{enumerate}
 \item $\Phi(s) \geq \Phi(t)$,
 \item $\max\{I(t)(\delta_s),S(t)(\delta_s)\} \leq \Phi(s)$,
 
 \end{enumerate}
 \item if $\delta \leq \delta_t$ then $\Phi(t)\leq \max\{I(t)(\delta),S(t)(\delta)\}$,

 \item if $\Phi(s)=\Phi(t)$ then $\delta_s=\delta_t$,
 
 \item if $r,q \in [0,1]$ such that $t\concat r \leq_{altlex} t \concat q$ then 
 \begin{enumerate}
  \item $I(t \concat r)|_{\delta_t} = S(t \concat r)|_{\delta_t}= I(t \concat q)|_{\delta_t}= S(t \concat q)|_{\delta_t}$,
 
 \item $I(t \concat r)(\delta_t) \leq S(t \concat r)(\delta_t)\leq I(t \concat q)(\delta_t)\leq S(t \concat q)(\delta_t)$ if $\delta_t$ is even,
 \item $I(t \concat r)(\delta_t) \geq S(t \concat r)(\delta_t)\geq I(t \concat q)(\delta_t)\geq S(t \concat q)(\delta_t)$  if $\delta_t$ is odd,
\end{enumerate}
 \item if $t\leq_{altlex}u$ and $\delta$ is an even ordinal such that $I(t)|_{\delta}=S(t)|_{\delta}=I(u)|_{\delta}$ then
  \[I(t)(\delta)\leq S(t)(\delta)  \leq I(u)(\delta).\]
  \end{enumerate}
\end{lemma}
\begin{proof}
$J_s \supset J_t$, so by the fact that $\Psi$ is order preserving we get \[I(s) \leq_{altlex} I(t) \leq_{altlex} S(t)\leq_{altlex}S(s).\]
Therefore, by the definition of $<_{altlex}$ it is clear that $\delta_s \leq \delta_t$, so we have (1).

Now we show part (b) of (2). It is easy to see from the definition of $<_{altlex}$ that for every $\bar{x} \in [\inf(J_s),\sup(J_s)]$ we have $\Psi(\bar{x})(\delta_s) \in
[\phi(s),\Phi(s)]$. In particular, as $[\inf(J_t),\sup(J_t)] \subset [\inf(J_s),\sup(J_s)]$ we obtain 
\begin{equation}
\label{e:compest}
\max\{I(t)(\delta_s),S(t)(\delta_s)\} \in 
[\phi(s),\Phi(s)],
\end{equation}
which gives  part (b).
Since $I(t)$ and $S(t)$ are strictly decreasing sequences, using (1) we have
\[I(t)(\delta_t)  \leq I(t)(\delta_s) \text { and } S(t)(\delta_t)  \leq S(t)(\delta_s).\]
Hence, (\ref{e:compest}) yields that $\Phi(t) \leq \Phi(s)$. Thus we have verified (2).

In order to see (3), use again that the sequences $I(t)$ and $S(t)$ are  decreasing. Hence from $\delta\leq \delta_t$ and the definition of $\delta_t$ we have (3):
\[\Phi(t)=\max\{I(t)(\delta_t),S(t)(\delta_t)\}\leq \max\{I(t)(\delta),S(t)(\delta)\}.\]

In order to prove (4) using (1) it is enough to show that $\delta_s <\delta_t$ implies $\Phi(t)<\Phi(s)$. If $\delta_s <\delta_t$
then by the definition of $\delta_t$, the fact that the sequences $I(t)$ and $S(t)$ are strictly decreasing and \eqref{e:compest}, we obtain
\[\Phi(t)=\max\{I(t)(\delta_t),S(t)(\delta_t)\}< \max\{I(t)(\delta_s),S(t)(\delta_s)\} \leq \Phi(s),\]
which proves (4).

Now we prove (5). Notice that $t \concat r \leq_{altlex} t \concat q$ implies that $J_{t \concat r}\leq_{altlex \times 2} J_{t \concat q}$. Thus, \[\inf(J_{t \concat r}) \leq_{L \times 2}\sup(J_{t \concat r}) \leq_{L \times 2}\inf(J_{t \concat q}) \leq_{L \times 2}\sup(J_{t \concat q}).\]
Consequently, by the fact that $\Psi$ is order preserving, we get
\begin{equation}
\label{e:llong}
I(t \concat r)\leq_{altlex}S(t \concat r)\leq_{altlex}I(t \concat q)\leq_{altlex}S(t \concat q).
\end{equation}
From $J_{t \concat r}, J_{t \concat q} \subset J_t$ it is clear that
\[I(t)\leq_{altlex}I(t \concat r)\leq_{altlex}S(t \concat r)\]\[\leq_{altlex}I(t \concat q)\leq_{altlex}S(t \concat q) \leq_{altlex} S(t).\]
Thus, from the definition of $\delta_t$ we have 
\[I(t)|_{\delta_t}=I(t \concat r)|_{\delta_t}=S(t \concat r )|_{\delta_t}=I(t \concat q)|_{\delta_t}=S(t \concat q)|_{\delta_t}=S(t)|_{\delta_t},\]
so this shows that (a) holds. 
Now using (a), the definition of $<_{altlex}$ and \eqref{e:llong} we obtain (b) and (c) of (5) as well.

The proof of (6) is similar to the previous argument: $t\leq_{altlex} u$ implies $J_t \leq_{L \times 2} J_u$, consequently $I(t)\leq_{altlex}S(t)\leq_{altlex}I(u)$.
Since by assumption $\delta$ is even and $I(t)|_{\delta}=S(t)|_{\delta}=I(u)|_{\delta}$, the definition of $<_{altlex}$ implies
\[I(t)(\delta)\leq S(t)(\delta)  \leq I(u)(\delta).\]
\end{proof}

The following lemma is the essence of our proof. 
\begin{lemma}
\label{l:compmain}
There exists a $\subsetneqq$-increasing sequence $\{s_\alpha\}_{\alpha<\omega_1}$ such that $s_\alpha \in \sigma^*[0,1]$, $l(s_\alpha)=\alpha$ and  \[(\forall r \in s_\alpha)(\Phi(s_\alpha)<r). \tag{*}\]
\end{lemma}
\begin{proof}
We define $s_\alpha$ by induction on $\alpha$. 

Suppose that we have defined $s_\beta$ for $\beta<\alpha$. Then by the inductive hypothesis for every $\beta<\alpha$ we have \begin{equation}
\label{e:sbind0}
(\forall r \in
s_\beta)(\Phi(s_\beta)<r).
\end{equation}
Now we define $s_\alpha$ for limit and successor $\alpha$'s separately.

{\sc $\alpha$ is a limit.}  Let $s_\alpha=\bigcup_{\beta<\alpha} s_\beta$. If
$r \in s_\alpha$ is arbitrary then $r \in s_\beta$ for some $\beta<\alpha$. Notice that part (a) of (2) of Lemma \ref{l:obs} and \eqref{e:sbind0} imply
\[
(s_\beta \subset s_\alpha \text{ and } r \in s_\beta) \Rightarrow
\Phi(s_\alpha) \leq
\Phi(s_\beta)<r.
\] Hence, using $s_\beta \subset s_\alpha$ we obtain  $\Phi(s_\alpha)<r$ so $s_\alpha$ satisfies requirement (*).

{\sc $\alpha$ is a successor.} Let $\alpha=\beta+1$.

Our aim is to find a real $x$ such that 
\begin{equation}
\label{e:tosolve}
s_\beta \concat x \in \sigma^*[0,1] \text{ and } \Phi(s_\beta \concat x)<x. 
\end{equation}
 Clearly, this ensures that $s_\alpha=s_\beta \concat x$ satisfies (*).

Now notice that (\ref{e:sbind0}) yields \begin{equation}
\label{e:sbcc}s_\beta \concat \Phi(s_\beta) \in \sigma^*[0,1]. \end{equation} Now we have to separate two cases.

\textit{First}, suppose that \[
\Phi(s_\beta \concat \Phi(s_\beta))<\Phi(s_\beta).
\] Let $x=\Phi(s_\beta)$. It is clear that $x$ satisfies (\ref{e:tosolve}) by induction, so $s_\alpha=s_\beta \concat x$ is a suitable choice for (*).

\textit{Second}, suppose that $\Phi(s_\beta \concat \Phi(s_\beta))\geq \Phi(s_\beta).$
Since $s_\beta \subset
s_\beta \concat \Phi(s_\beta)$, by part (a) of (2) of Lemma \ref{l:obs} we have
$\Phi(s_\beta \concat \Phi(s_\beta)) \leq \Phi(s_\beta)$, so in fact 
\begin{equation}
 \label{e:yassumpt}
 \Phi(s_\beta \concat \Phi(s_\beta))=\Phi(s_\beta).
\end{equation}
Moreover, by (4) of Lemma \ref{l:obs} we obtain that (\ref{e:yassumpt}) implies 
\begin{equation}
\label{e:deltaequ}
\delta_{s_\beta \concat \Phi(s_\beta)}=\delta_{s_\beta}.
\end{equation} 
 In order to find an $x$ that satisfies (\ref{e:tosolve}) we will distinguish 3 cases according to the
parity of $\beta$ and $\delta_{s_\beta}$.

{\noindent \textbf{Case 1.}} $\beta$ and $\delta_{s_\beta}$ have the same parity.\\
By (\ref{e:defphitriv})  we can choose an \begin{equation}
\label{e:cxdef1}
x \in (\phi(s_\beta \concat \Phi(s_\beta)),\Phi(s_\beta \concat \Phi(s_\beta)))=(\phi(s_\beta \concat \Phi(s_\beta)),\Phi(s_\beta))
\end{equation}
where the equality holds because of (\ref{e:yassumpt}).

We claim that $x$ has property (\ref{e:tosolve}).
Clearly, $x<\Phi(s_\beta)$ and therefore by (\ref{e:sbind0}) we have $s_\beta \concat x \in \sigma^*[0,1]$, hence the first part of \eqref{e:tosolve} holds. Now we can use (5) of Lemma \ref{l:obs} (part (b) with $t=s_\beta$, $r=x$, $q=\Phi(s_\beta)$ if $\delta_{s_\beta}$ and $\beta$ are even and part (c) with $t=s_\beta$, $r=\Phi(s_\beta)$, $q=x$ if they are odd) and we obtain
 \begin{equation}
\label{e:isnotin}
\max\{I(s_\beta \concat x)(\delta_{s_\beta}),S(s_\beta \concat
x)(\delta_{s_\beta})\} \leq 
\end{equation}
\[
\min\{I(s_\beta\concat \Phi(s_\beta))(\delta_{s_\beta}),S(s_\beta\concat \Phi(s_\beta))(\delta_{s_\beta})\}=
\phi(s_\beta \concat \Phi(s_\beta))<x,
\]
where the equality follows from the definition of $\phi$ and \eqref{e:deltaequ} and the last inequality follows from \eqref{e:cxdef1}.

By (1) of Lemma \ref{l:obs} we have $\delta_{s_\beta} \leq \delta_{s_\beta \concat x}$ and (3) of Lemma \ref{l:obs} implies
\[\Phi(s_\beta \concat x)\leq \max\{I(s_\beta \concat
x)(\delta_{s_\beta}),S(s_\beta \concat x)(\delta_{s_\beta})\}.\]
Combining this inequality with (\ref{e:isnotin}) we obtain that the second part of (\ref{e:tosolve}) holds for $x$. So $s_\alpha=s_\beta \concat x$ satisfies (*), hence we are done with the first case.

{\noindent \textbf{Case 2.}} $\beta$ is even and $\delta_{s_\beta}$ is odd.\\ Then clearly, by \eqref{e:yassumpt}, (\ref{e:deltaequ}) and the odd parity of $\delta_{s_\beta}$
\[\Phi(s_\beta)=
\Phi(s_\beta
\concat \Phi(s_\beta))=\]
\[\max\{I(s_\beta \concat \Phi(s_\beta))(\delta_{s_\beta \concat \Phi(s_\beta)}),S(s_\beta \concat \Phi(s_\beta))(\delta_{s_\beta \concat \Phi(s_\beta)})\}=\]
\[\max\{I(s_\beta \concat \Phi(s_\beta))(\delta_{s_\beta}),S(s_\beta \concat \Phi(s_\beta))(\delta_{s_\beta})\}=I(s_\beta \concat \Phi(s_\beta))(\delta_{s_\beta}).
\]
Thus,
\begin{equation}\Phi(s_\beta)=I(s_\beta \concat \Phi(s_\beta))(\delta_{s_\beta}).
\label{e:compeven1}
\end{equation}
Let $z<\Phi(s_\beta)$ be arbitrary. Clearly, by the parity of $\beta$ we get $s_\beta \concat z
<_{altlex} s_\beta \concat \Phi(s_\beta)$.

Hence, using part (c) of (5) of Lemma \ref{l:obs} with $t=s_\beta$, $r=z$ and $q=\Phi(s_\beta)$ we obtain
\begin{equation}
\label{e:compnew0}I(s_\beta \concat z)(\delta_{s_\beta}) \geq S(s_\beta \concat z))(\delta_{s_\beta}) \geq I(s_\beta \concat
\Phi(s_\beta))(\delta_{s_\beta})
\geq S(s_\beta \concat \Phi(s_\beta))(\delta_{s_\beta}).
\end{equation}
Now, part (b) of (2) of Lemma \ref{l:obs} applied to $s_\beta $ and $s_\beta \concat z$ yields
\begin{equation}
\max\{I(s_\beta \concat
z)(\delta_{s_\beta}),S(s_\beta \concat
z)(\delta_{s_\beta})\} \leq \Phi(s_\beta).
\label{e:compnew1}
\end{equation}
Comparing this inequality with (\ref{e:compnew0}) and (\ref{e:compeven1}) we have 
\begin{equation}
\label{e:compnew2}
I(s_\beta \concat
z)(\delta_{s_\beta}) = S(s_\beta \concat
z)(\delta_{s_\beta})=I(s_\beta \concat \Phi(s_\beta))(\delta_{s_\beta}).
\end{equation}
Therefore, as by (1) of Lemma \ref{l:obs} $\delta_{s_\beta \concat z} \geq \delta_{s_\beta}$, we obtain that
\begin{equation}
\label{e:compeven2}
\text{ for every $z<\Phi(s_\beta)$ we have }
\delta_{s_\beta \concat z} \geq \delta_{s_\beta}+1. 
\end{equation}
Notice that (a) of (5) of Lemma \ref{l:obs} applied to $s_\beta \concat z$ and $s_\beta \concat \Phi(s_\beta)$ imply that
\begin{equation}
\label{e:compevennew}I(s_\beta \concat z) |_{\delta_{s_\beta}}=S(s_\beta \concat z) |_{\delta_{s_\beta}}=
I(s_\beta \concat \Phi(s_\beta)) |_{\delta_{s_\beta}}=S(s_\beta \concat \Phi(s_\beta)) |_{\delta_{s_\beta}}.
\end{equation}
Now the even parity of $\delta_{s_\beta}+1$, $s_\beta \concat z <_{altlex} s_\beta
\concat \Phi(s_\beta)$, (\ref{e:compnew2}) and \eqref{e:compevennew} show that (6) of Lemma \ref{l:obs}  
can be applied for $t=s_\beta \concat z$ and $u=s_\beta \concat \Phi(s_\beta)$ and $\delta=\delta_{s_\beta}+1$. This yields for every $z<\Phi(s_\beta)$ that
\begin{equation}
\label{e:compeven3}\max\{I(s_\beta \concat z)(\delta_{s_\beta}+1),S(s_\beta
\concat z)(\delta_{s_\beta}+1)\}\leq \end{equation}
\[
\leq I(s_\beta \concat
\Phi(s_\beta))(\delta_{s_{\beta}}+1)<I(s_{\beta}\concat \Phi(s_\beta))(\delta_{s_\beta})=\Phi(s_\beta),
\]
where the last inequality follows from the fact that $I(s_{\beta}\concat \Phi(s_\beta))$ is strictly decreasing and the equality comes from \eqref{e:compeven1}.

So by equations (\ref{e:compeven2}), (\ref{e:compeven3}) and (3) of Lemma \ref{l:obs} for an $x \in (I(s_{\beta}\concat \Phi(s_\beta))(\delta_{s_\beta}+1),\Phi(s_\beta))$ we obtain \[\Phi(s_\beta \concat x) \leq \max\{I(s_\beta
\concat x)(\delta_{s_\beta}+1),S(s_\beta \concat x)(\delta_{s_\beta}+1)\}\]\[\leq
I(s_\beta \concat \Phi(s_\beta))(\delta_{s_\beta}+1)<x.  \]
Thus, the second part of (\ref{e:tosolve}) holds for $x$. The first part is clear from $x<\Phi(s_\beta)$ and (\ref{e:sbind0}), hence $s_\alpha=s_\beta \concat x$ is an appropriate choice for (*).

{\noindent \textbf{Case 3.}} $\beta$ is odd and $\delta_{s_\beta}$ is even. \\Then $s_\beta$ has a least
element $\min s_\beta$, and by induction and (\ref{e:yassumpt}) $\min s_\beta>\Phi(s_\beta)=\Phi(s_\beta \concat \Phi(s_\beta))$. Now let
$x \in (\Phi(s_\beta),\min s_\beta)$. Then we have $s_\beta \concat x \in \sigma^*[0,1]$, so the first part of \eqref{e:tosolve} holds. Since $\beta$ is odd, we have $s_\beta \concat x <_{altlex} s_\beta \concat
\Phi(s_\beta)$. Therefore, from the fact that $\delta_{s_\beta}$ is even using part (b) of (5) of Lemma \ref{l:obs} it follows that
\begin{equation}
\label{e:compfinal}I(s_\beta \concat x)(\delta_{s_\beta}) \leq S(s_\beta \concat
x)(\delta_{s_\beta}) \leq S(s_\beta \concat \Phi(s_\beta))(\delta_{s_\beta})\end{equation}
\[
\leq \Phi(s_\beta \concat \Phi(s_\beta))=\Phi(s_\beta)<x,
\]
where the last $\leq$ uses (\ref{e:deltaequ}) while the equality comes from (\ref{e:yassumpt}).
Hence, using (1) of Lemma \ref{l:obs} we get $\delta_{s_\beta \concat x} \geq \delta_{s_\beta}$, so by (3) of Lemma \ref{l:obs} and \eqref{e:compfinal} we obtain
\[\Phi(s_\beta \concat x)\leq
\max\{I(s_\beta \concat x)(\delta_{s_\beta}),S(s_\beta \concat
x)(\delta_{s_\beta})\}<x,\] thus, again $x$ satisfies the second part of (\ref{e:tosolve}) so $s_\alpha =s_\beta \concat x$ is a good
choice for (*).

Thus, in any case we can carry out the induction.
 \end{proof}
 In order to prove the theorem just notice that Lemma \ref{l:compmain} gives an $\omega_1$-long $\subsetneqq$-increasing sequence of elements in $\sigma^*[0,1]$. But then
$\bigcup_{\alpha<\omega_1} s_\alpha$ would be an $\omega_1$-long decreasing
sequence of reals, which is a contradiction.
Therefore no completion of $\un$ can be embedded into itself and this finishes the
proof of the theorem.\end{proof}

\begin{remark}
Let $C$ be the following set:
\[\{\bar{x} \concat x_\xi \concat 0: \bar{x} \in \sigma^*[0,1], \xi \text{ is even, } l(\bar{x})=\xi+1,x_\xi \not =0\}.\]
The ordering $<_{altlex}$ extends to the set $C \cup \uni$ naturally and it is not hard to show that this ordering is complete. By Theorem \ref{t:complete} this is not representable in $\mc{B}_1$. However, one can show that this ordering does not contain $\omega_1$, $\omega^*_1$ and Suslin lines. Thus, we obtain another proof of \cite[Theorem 4.1]{marci2}.
\end{remark}

\section{Proof of Proposition \ref{p:bb}}
\label{s:last}

 \begin{customthm}{\ref{p:bb}}
  (\cite{KL})
Let $X$ be a Polish space and $f \in b\mathcal{B}^+_1(X)$. Then $\Phi(f)$ is defined, $\Phi(f) \in
\sigma^*bUSC^+$ and we have 
\begin{enumerate}[(1)]
	  \item  $f=\sum^*_{\beta<\alpha} (-1)^\beta
f_{\beta}+(-1)^{\alpha}g_\alpha$ for every $\alpha \leq \xi_{f}$,
	  \item $f_{\xi_f} \equiv0,$ 
      \item $f=\sum^*_{\alpha<\xi_f} (-1)^\alpha f_{\alpha}$.
         \end{enumerate}
 \end{customthm}

\begin{proof} 
First we show that $\Phi(f)$ is defined and $\Phi(f) \in \sigma^*bUSC^+$. In
order to prove this, we will show the following lemma.
\begin{lemma}
\label{l:fgmon}
The functions $g_\alpha$ and $f_\alpha$ (assigned to $f$ in Definition
\ref{d:konst}) are bounded nonnegative and the sequence $(f_{\alpha})$ is
decreasing.
\end{lemma}

\begin{proof}

It follows trivially from the definition of the upper regularization that if $g$
is an arbitrary function then
\begin{equation}
\label{e:ftriv1}
g \text{ is bounded} \Rightarrow \widehat{g} \text{ exists, is bounded and } \widehat{g} \geq_p g.
\end{equation}
Now we prove the statement of the lemma by induction on $\alpha$.
If $\alpha=0$ then $g_0=f$ and $f_0=\widehat{f}$, hence from $f \in
b\mc{B}_1^+(X)$ and (\ref{e:ftriv1}) clearly follows that $g_0$ and $f_0$ are
bounded nonnegative functions.

If $\alpha$ is a successor then by definition
$g_\alpha=\widehat{g_{\alpha-1}}-g_{\alpha-1}$ so by the second part of
(\ref{e:ftriv1}) we have $g_\alpha\geq_p 0$. Moreover, since $g_{\alpha-1}$ is
bounded  $\widehat{g_{\alpha-1}}$ is also bounded. Thus, $g_\alpha$ is the
difference of two bounded functions, therefore it is also bounded. Therefore, by
(\ref{e:ftriv1}) $f_\alpha$ exists (notice that we have defined the upper
regularization only for bounded functions) and also bounded and nonnegative.

Now we show that the sequence $(f_\alpha)$ is also decreasing. By the nonnegativity
of $g_{\alpha-1}$ we have $f_{\alpha-1}-g_{\alpha-1} \leq_p f_{\alpha-1}$, so
\[f_{\alpha}=\widehat{f_{\alpha-1}-g_{\alpha-1}} \leq_p
\widehat{f}_{\alpha-1}=f_{\alpha-1}.\]
For limit $\alpha$ we have 
\begin{equation}
\label{e:limitg}
g_\alpha=\inf\{g_\beta:\beta<\alpha \text{ and
$\beta$ is even}\},
\end{equation}
so clearly $g_\alpha \geq_p 0$ and $g_\alpha$ is bounded. Hence using again 
(\ref{e:ftriv1}) we obtain that $f_{\alpha}$ is bounded and nonnegative.

Now for every $\beta$ we have $g_{\beta} \leq_p f_{\beta}$. Therefore, if $\beta$
is an even ordinal and $\beta<\alpha$ then by \eqref{e:limitg} we have \[g_{\alpha}\leq_p g_\beta \leq_p
f_{\beta},\] so $f_{\alpha} =\widehat{g_\alpha} \leq_p
\widehat{f_{\beta}}=f_\beta$. But if $\beta$ is odd, then $\beta+1$ is even and
$\beta+1<\alpha$. Using \eqref{e:limitg} we obtain $g_\alpha \leq_p g_{\beta+1}$ hence by the definition of $f_\alpha$ and $f_{\beta+1}$ and the inductive hypothesis we have $f_\alpha \leq_p f_{\beta+1} \leq_p
f_\beta$. 
This finishes the proof of the lemma.
\end{proof}

Clearly, by the definition of upper regularization, the functions $f_{\alpha}$
are upper semicontinuous. Therefore, by Lemma \ref{l:fgmon} we obtain that
$(f_\alpha)$ is a decreasing sequence of nonnegative USC functions, so it must
stabilize for some countable ordinal $\xi_f$ (Lemma
\ref{l:embed}). Therefore, for every function in $f \in b\mathcal{B}^+_1(X)$ we
have that $\Phi(f)$ is defined and $\Phi(f) \in \sigma^*bUSC^+(X)$. 

Now we need the following lemma.
 \begin{lemma}
\label{l:sumb1}Let $(f_\alpha)_{\alpha<\xi} \in \sigma^* USC^+$. Then
$\sum^*_{\alpha<\xi} (-1)^\alpha f_{\alpha}$ is a Baire class 1 function.
\end{lemma}
\begin{proof} We prove the lemma by induction on $\xi$. 

First, if $\xi$ is a successor just use that Baire class $1$ functions are
closed under addition and subtraction.

Second, if $\xi$ is a limit, by definition of the alternating sums we have that
\[\scalebox{1.3}{$\Sigma$}_{\alpha < \xi}^* (-1)^\alpha f_{\alpha}=\sup \{
\scalebox{1.3}{$\Sigma$}_{\beta<\alpha}^* (-1)^{\beta} f_{\beta}: \alpha<\xi,
\alpha \text{ even}\}.\]
For even $\alpha<\xi$ we have \[\scalebox{1.3}{$\Sigma$}^*_{\beta<\alpha}
(-1)^\beta f_{\beta}=\scalebox{1.3}{$\Sigma$}^*_{\beta<\alpha+1} (-1)^\beta
f_{\beta}-f_\alpha. \tag{*}\]
Again, for even $\alpha$ \[\scalebox{1.3}{$\Sigma$}^*_{\beta<\alpha} (-1)^\beta
f_{\beta}+f_\alpha -f_{\alpha+1}=\scalebox{1.3}{$\Sigma$}^*_{\beta<\alpha+2}
(-1)^\beta f_{\beta}\] so since the sequence $(f_\alpha)_{\alpha<\xi}$ is
 decreasing the sequence $(\sum^*_{\beta<\alpha} (-1)^\beta
f_{\beta})_{\alpha \text{ even}}$ is  increasing. Similarly, the
sequence $(\sum^*_{\beta<\alpha+1} (-1)^\beta f_{\beta})_{\alpha \text{ even}}$
is  decreasing. Notice that if $(r_\beta)_{\beta<\alpha}$ and
$(t_\beta)_{\beta<\alpha}$ are  decreasing transfinite sequences of nonnegative
reals such that $r_\beta-t_\beta$ is increasing, then 
\[\sup\{r_\beta-t_\beta:\beta<\alpha\}=\inf\{r_\beta:\beta<\alpha\}-\inf\{
t_\beta:\beta<\alpha\}.\]
Therefore, applying $(*)$ and these facts we have
\[\sup\{\scalebox{1.3}{$\Sigma$}^*_{\beta<\alpha} (-1)^\beta
f_{\beta}:\alpha<\xi \text{
even}\}=\]\[\inf\{\scalebox{1.3}{$\Sigma$}^*_{\beta<\alpha+1} (-1)^\beta
f_{\beta}:\alpha<\xi \text{ even}\}-\inf\{f_\alpha:\alpha<\xi \text{ even}\}.\]
The infimum of USC functions is also USC, hence the right-hand side of the equation
is the difference of the infimum of a countable family of Baire class 1 functions and a USC function.
Therefore, $\sup\{\sum^*_{\beta<\alpha} (-1)^\beta f_{\beta}:\alpha<\xi \text{
even}\}$ is the infimum of a countable family of Baire class 1 functions. Moreover, by the inductive
hypothesis, this function is also the supremum of a countable family of Baire class 1 functions. Now, using the fact that a function is Baire class $1$ if and only if the preimage of every open set is $\mathbf{\Sigma}^0_2(X)$ it is easy to see that if a function $h$ is the infimum of a countable family of Baire class $1$ functions then for every $a \in \R$ we have that $h^{-1}((-\infty,a))$ is in $\mathbf{\Sigma}^0_2(X)$. Similarly, if $h$ is the supremum of a countable family of Baire class $1$ functions then the sets $h^{-1}((a,\infty))$ are also in $\mathbf{\Sigma}^0_2(X)$. But this implies that a function that is both an infimum and a supremum of countable families of Baire class $1$ functions is also Baire class $1$.  

So, as an
infimum and supremum of countable families of Baire class 1 functions, the function
$\sup\{\sum^*_{\beta<\alpha} (-1)^\beta f_{\beta}:\alpha<\xi \text{ even}\}$ is
also a Baire class 1 function, which completes the inductive proof.\end{proof}

Now we prove (1) of the Proposition by induction on $\alpha$.

For $\alpha=0$ this is clear. If $\alpha$ is a successor, then
$g_{\alpha-1}=f_{\alpha-1}-g_{\alpha}$, so \[f=\scalebox{1.3}{$\Sigma$}_{\beta < \alpha-1}^*  (-1)^\beta
f_{\beta}+(-1)^{\alpha-1}g_{\alpha-1}=\]\[ \scalebox{1.3}{$\Sigma$}_{\beta <
\alpha-1}^* (-1)^\beta f_{\beta}+(-1)^{\alpha-1}(f_{\alpha-1}-g_\alpha)=\scalebox{1.3}{$\Sigma$}_{\beta < \alpha}^* (-1)^\beta
f_{\beta}+(-1)^{\alpha}g_\alpha.\]

For limit $\alpha$ notice that we have by induction
for every even $\beta<\alpha$ 
\[f=\scalebox{1.3}{$\Sigma$}^*_{\gamma<\beta} (-1)^{\gamma}
f_{\gamma}+g_{\beta}.\]
Then, using that the sequence $(f_\beta)_{\beta<\alpha}$ is
decreasing, the sequence $(\scalebox{1.3}{$\Sigma$}^*_{\gamma<\beta}
(-1)^{\gamma} f_{\gamma})_{\beta \text{ even}}$ is  increasing, so
$(g_\beta)_{\beta \text{ even}}$ is  decreasing as
their sum is constant $f$. 

Notice that if  $(r_\beta)_{\beta<\alpha}$ is an increasing and
$(t_\beta)_{\beta<\alpha}$ is a decreasing transfinite sequence of nonnegative
reals such that $r_\beta+t_\beta=c$ is constant, then 
\[c=\sup\{r_\beta+t_\beta:\beta<\alpha\}=\sup\{r_\beta:\beta<\alpha\}+\inf\{
t_\beta:\beta<\alpha\}.\]
So 
\[f=\sup_{\beta \text{ even}, \beta<\alpha}
 \left(\scalebox{1.3}{$\Sigma$}^*_{\gamma<\beta} (-1)^{\gamma}
f_{\gamma}+g_\beta \right)=\]
\[\sup_{\beta \text{ even},
\beta<\alpha}\scalebox{1.3}{$\Sigma$}^*_{\gamma<\beta} (-1)^{\gamma}
f_{\gamma}+\inf_{\beta \text{ even}, \beta<\alpha}g_{\beta}=
\scalebox{1.3}{$\Sigma$}^*_{\beta<\alpha} (-1)^\beta f_{\beta}+g_\alpha,\]
where the last equality follows from the definition of   $\sum^*_{\beta<\alpha}
(-1)^\beta f_{\beta}$ and $g_\alpha$.

This proves the induction hypothesis, so we have $(1)$. 

After rearranging the equality in (1) we have that
\[(-1)^{\alpha+1}g_\alpha=\scalebox{1.3}{$\Sigma$}^*_{\beta<\alpha} (-1)^\beta
f_{\beta}-f.\]
By Lemma \ref{l:sumb1} we have that the sum on the right-hand side of the equation is
a Baire class $1$ function, therefore $g_{\alpha}$ is also Baire class $1$. We have that $f_{\xi_f+1}\equiv f_{\xi_f}$, so by Definition \ref{d:konst} we have $\widehat{\widehat{g_{\xi_f}}-g_{\xi_f}}=\widehat{g_{\xi_f}}$.
Hence in order to prove (2) it is enough to show the following claim.\\
\textbf{Claim.} \textit{If $g$ is a nonnegative, bounded Baire class $1$
function such that $\widehat{g}=\widehat{\widehat{g}-g}$ then $g \equiv 0$. }

\begin{proof}[Proof of the Claim.] 
Suppose the contrary. Then there exists an $\varepsilon>0$ such that
$\{x:g(x)>\varepsilon\} \not = \emptyset$. Let
$K=\overline{\{x:g(x)>\varepsilon\}}$. Since $g$ is a Baire class $1$ function
we have that there exists an open set 
$V$ such that \[\varepsilon>osc(g, K\cap V)\ \ (=\sup_{x,y \in K \cap V}
|g(x)-g(y)|) \] and $K \cap V$ is not empty (see \cite[24.15]{kech}).

The function $\limsup_{y \to x} g(y)$ (here in the $\limsup$ we do not exclude
those sequences which contain $x$) is USC. Therefore, by definition 
$\widehat{g} \leq_p \limsup g$. Hence letting $h=\widehat{g}-g$ we have that 
\begin{equation}
\label{e:hest}
h \leq_p \limsup (g) - g.
\end{equation}
Now, we claim that \begin{equation}
\label{e:hest2}
(\limsup (g)-g)|_{V \cap K} \leq \varepsilon.
\end{equation}
Suppose the contrary. Then there exists an $x \in V \cap K$ such that $(\limsup_{y
\to x} g(x))-g(x) > \varepsilon$. Consequently, there exists a sequence $y_n \to
x$, such that $\lim_{n \to \infty} g(y_n)>g(x)+\varepsilon$. Using the nonnegativity
of $g$ and the fact that $g|_{K^c} \leq \varepsilon$ we get that $y_n \in K \cap
V$ except for finitely many $n$'s.  But then $osc(g,K\cap V)>\varepsilon$, a
contradiction. So we have (\ref{e:hest2}) and using (\ref{e:hest}) we obtain 
\begin{equation}
\label{e:hest3}
h|_{V \cap K} \leq \varepsilon.
\end{equation}
Observe now that if for a bounded function $f$ and an open set $U$ we have that
$f|_U \leq \varepsilon$, then $\widehat{f}|_U \leq  \varepsilon$ (clearly, if
$|f|<K$ then the function $K \cdot\chi_{U^c}+\varepsilon \cdot\chi_{U}$ is an
USC upper bound of $f$).

By the above observation used for $g$ on $K^c$ we have that 
$\widehat{g}|_{K^c}\leq \varepsilon$, in particular from
$h=\widehat{g}-g\leq_p \widehat{g}$ we obtain that $h|_{K^c}\leq \varepsilon$.
Then from (\ref{e:hest3}) we get $h|_{V} \leq \varepsilon$. So finally, using
the above observation for $h$ and $V$ we obtain $\widehat{h}|_V \leq \varepsilon$. 

The set $\{x:g(x)>\varepsilon\}$ is dense in $K$, hence there exists an $x_0 \in
V \cap \{x:g(x)>\varepsilon\}$. On the one hand $\widehat{g}(x_0) \geq g(x_0) >
\varepsilon$, on the other by $x \in V$ we get $\widehat{h}(x_0)\leq
\varepsilon$. This contradicts the assumption that $\widehat{g}=\widehat{h}$.\end{proof}
So we have proved $(2)$ of Proposition \ref{p:bb}. 

$(3)$ easily follows from Lemma \ref{l:fgmon}, $(1)$, $(2)$ since $0 \leq g_{\xi_f}
\leq f_{\xi_f} \equiv 0$. This finishes the proof of the proposition.\end{proof}

\section{Open problems}
\label{s:open}

Probably the most natural and intriguing problem is the following. Recall that the $\alpha$th level of the Baire hierarchy in a space $X$ is denoted by $\mc{B}_\alpha(X)$. Unless stated otherwise, $X$ is an uncountable Polish space.

\begin{problem}
\label{q:1}
Let $2 \leq \alpha <\omega_1$. Characterize the order types of the linearly ordered subsets of $\mc{B}_\alpha(X)$. For instance, does there exist a (simple) universal linearly ordered set for $\mc{B}_\alpha(X)$? And how about the class of Borel measurable functions $\cup_{\alpha<\omega_1} \mc{B}_\alpha(X)$?
\end{problem}

We remark here that Komj\'ath \cite{komjath} proved that under the Continuum Hypothesis every linearly ordered set of cardinality at most $\mathfrak{c}$ can be represented in $\mc{B}_2(X)$ (hence in $\mc{B}_\alpha(X)$ for any $\alpha \geq 2$ as well). Nevertheless, a $ZFC$ result would be very interesting and in light of our solution to Laczkovich's problem now it seems conceivable that one can construct relatively simple universal linearly ordered sets in these cases as well. As a first step in this direction it would be interesting to see if the result of Kechris and Louveau can be generalized to $\mc{B}_\alpha(X)$. Actually, closely related results from this paper have already been generalised from the Baire class $1$ case to the Baire class $\alpha$ case in \cite{ekv}.

Let $(L_n)_{n \in \omega}$ and $L$ be linearly ordered sets. We say that 
 $L$ is a \emph{blend of $(L_n)_{n \in \omega}$} if $L$ can be partitioned to pairwise disjoint subsets $(L'_n)_{n \in \omega}$ such that $L_n$ is order isomorphic to $L'_n$ for every $n$. Elekes \cite{marci1} proved that if the duplication and completion of every representable ordering was representable then countable blends of representable orderings would also be representable. As we have seen (Theorem \ref{t:complete}), the second condition of this theorem fails, hence it is quite natural to ask the following.

\begin{problem}
Suppose that the linearly ordered sets $L_n$ are representable in $\mc{B}_1(X)$ and $L$ is a blend of $(L_n)_{n \in \omega}$. Does it follow that $L$ is also representable in $\mc{B}_1(X)$?
\end{problem}

The authors would expect a negative answer using similar ideas and techniques as in the proof of Theorem \ref{t:complete}.

Elekes and Kunen \cite{marci3} investigated Problem \ref{p:laczk} in general, for non-Polish $X$. This raises the next question:

\begin{problem}
Let $X$ be a topological space (e. g. a separable metric space). Characterize the order types of the linearly ordered subsets of $\mc{B}_1(X)$. For instance, does there exist a (simple) universal linearly ordered set for $\mc{B}_1(X)$?
\end{problem}

We believe that an affirmative answer might be useful in answering Question \ref{q:1} using topology refinements.

The next problem concerns characterizing all the subposets of our function spaces instead of only the linearly ordered ones. For example, it is not hard to check that $\mathcal{F}(X) = \mathcal{C}([0,1])$ contains an isomorphic copy of a poset $P$ iff $(\mathcal{P}(\omega), \subsetneqq)$ does. 

\begin{problem}
Characterize, up to poset-isomorphism, the subsets of $\mc{B}_1(X)$. Does there exist a simple, informative universal poset? For instance, is $\mathbf{\Delta}^0_2(X)$ or $USC^{<\omega_1}_{\searrow 0}(X)$ universal?
\end{problem}

Here $USC^{<\omega_1}_{\searrow 0}$ is defined analogously to $\uni$ and is ordered by the natural modification of $<_{altlex}$.
Notice that our method of proving that $(\mc{B}_1(X),<_p) \hookrightarrow (\mathbf{\Delta}^0_2(X),\subsetneqq)$ does not give a poset isomorphism between $\mc{B}_1(X)$ and its image. In fact, the image is linearly ordered. Unfortunately, it can be easily seen that even the Kechris-Louveau-type embedding $\mc{B}_1(X) \to bUSC^{<\omega_1}_{\searrow 0}$, that is, assigning to every Baire class $1$ function its canonical resolution as a sum is not a poset isomorphism.



At first sight Laczkovich's problem seems to be closely related to the theory of Rosenthal
compacta \cite{ros}, \cite{enc}, \cite{tod2}.

\begin{problem}
Explore the connection between the topic of our paper and the theory of Rosenthal compacta.
\end{problem}

\subsection*{Acknowledgements} 
	We are very grateful to Mikl\'os Laczkovich and Stevo Todor\v{c}evi\' c for numerous illuminating conversations. We would also like to thank the referee for helpful remarks.

\end{document}